\tikzset{help lines/.style={step=#1cm,very thin, color=gray},
help lines/.default=.5} 
\tikzset{thick grid/.style={step=#1cm,thick, color=gray},
thick grid/.default=1}
\numberwithin{figure}{section}
\numberwithin{table}{section}
\theoremstyle{definition}
\newtheorem*{remark}{Remark}
\theoremstyle{plain}
\newcommand{\thistheoremname}{}
\newtheorem*{genericthm*}{\thistheoremname}
\newenvironment{namedthm*}[1]
  {\renewcommand{\thistheoremname}{#1}%
   \begin{genericthm*}}
  {\end{genericthm*}}
\DeclareMathOperator{\ZZ}{\mathbb{Z}}
\newtheorem{theorem}{Theorem}[section]
\newtheorem{lemma}[theorem]{Lemma}
\newtheorem{proposition}[theorem]{Proposition}
\theoremstyle{definition}
\newtheorem{definition}{Definition}
\theoremstyle{definition}
\newtheorem{example}[theorem]{Example}
\theoremstyle{remark}
\newtheorem{conjecture}[theorem]{\bf Conjecture}
\numberwithin{equation}{section}
\newcommand{\ben}{\begin{equation}}
\newcommand{\een}{\end{equation}}
\newcommand{\Ud}{U_{|D_2|}}
\newcommand{\Gk}{G_{k, D_1, D_2}}
\newcommand{\sig}{\sigma_{k-1,D_1, D_2}}
\newcommand{\SL}[1]{\ensuremath{{\mathrm {SL}_{ #1 }}}}
\newcommand{\legendre}[2]{\left(\hspace{-1pt}\frac{#1}{#2}\hspace{-1pt}\right)}
\NewDocumentCommand{\sump}{e{_}}
 {%
  \DOTSB
  \mathop{\IfNoValueTF{#1}{\sump@{}}{\sump@{#1}}}%
  \nolimits
 }
\newcommand{\sump@}[1]{\mathpalette\sump@@{#1}}
\newcommand{\sump@@}[2]{%
  \ifx#1\displaystyle
    {\sump@display{#2}}%
  \else
    \sum@\nolimits'_{#2}%
  \fi
}
\newcommand{\sump@display}[1]{%
  \sbox\z@{$\m@th\displaystyle\sum@\nolimits'$}%
  \sbox\tw@{$\m@th\displaystyle\sum@\limits_{#1}$}%
  \sbox\@tempboxa{$\m@th\displaystyle'$}
  \mathop{\sum@\nolimits' \kern-\wd\@tempboxa}\limits_{#1}%
  \ifdim\wd\z@>\wd\tw@
    \kern\dimexpr\wd\z@-\wd\tw@\relax
  \fi
}
\newcommand{\genlegendre}[4]{%
  \genfrac{(}{)}{}{#1}{#3}{#4}%
  \if\relax\detokenize{#2}\relax\else_{\!#2}\fi
}
\newcommand{\BZ}{{\mathbb{Z}}}
\newcommand{\BP}{{\mathbb{P}}}
\newcommand{\BR}{{\mathbb{R}}}
\newcommand{\N}{{\bf {\rm N}}}
\DeclareMathOperator{\Sl}{SL}
\DeclareMathOperator{\Gl}{GL}
\DeclareMathOperator{\Span}{Span}
\DeclareMathOperator{\im}{Im}
\DeclareMathOperator{\re}{Re}
\DeclareMathOperator{\Tr}{Tr}
\DeclareMathOperator{\pr}{pr}
\setlist[enumerate]{leftmargin=*,widest=0}
\setlist[itemize]{leftmargin=*,widest=0}
\def\subsection{\@startsection{subsection}{2}%
  \z@{.5\linespacing\@plus.7\linespacing}{.3\linespacing}%
  {\normalfont\bfseries}}
\def\subsubsection{\@startsection{subsubsection}{3}%
  \z@{.5\linespacing\@plus.7\linespacing}{.3\linespacing}%
  {\normalfont\bfseries}}
\title{Subspaces spanned by eigenforms with nonvanishing twisted central $L$-values}
\author[J. Kayath]{June Kayath}
\address[J. Kayath]{Department of Mathematics, Massachusetts Institute of Technology, Cambridge, MA, 02139, USA}
\email{kayath@mit.edu}
\author[C. Lane]{Connor Lane}
\address[C. Lane]{Deparment of Mathematics, Rose-Hulman Institute of Technology, Terre-Haute, IN, 47803, USA}
\email{lanecf@rose-hulman.edu}
\author[B. Neifeld]{Ben Neifeld}
\address[B. Neifeld]{Deparment of Mathematics, William {\&} Mary, Williamsburg, VA, 23187, USA}
\email{bmneifeld@wm.edu}
\author[T. Ni]{Tianyu Ni}
\address[T. Ni]{School of Mathematical and Statistical Sciences, Clemson University, Clemson, SC, 29634, USA}
\email{tianyuni1994math@gmail.com}
\author[H. Xue]{Hui Xue}
\address[H. Xue]{School of Mathematical and Statistical Sciences, Clemson University, Clemson, SC, 29634, USA}
\email{huixue@clemson.edu}
\subjclass[2020]{11F37;  11F67}
\keywords{Rankin-Cohen brackets, Shimura lift, nonvanishing of twisted central L-values}
\begin{document}

\begin{abstract}
In this paper, we construct explicit spanning sets for two spaces. One is the subspace generated by integral-weight Hecke eigenforms with nonvanishing quadratic twisted central $L$-values. The other is a subspace generated by half-integral weight Hecke eigenforms with certain nonvanishing Fourier coefficients. Along the way, we show that these subspaces are isomorphic via the Shimura lift.

\end{abstract}

\maketitle

\section{Introduction}

Let $\ell\ge 2$ be an integer. For $N\geq1$ and a Dirichlet character $\chi$ modulo $N$, let $M_{\ell}(N,\chi)$ and $S_{\ell}(N,\chi)$ be the space of modular forms and cuspforms of weight $\ell$, level $N$ and nebentypus $\chi$, respectively. When $\chi$ is trivial, we simply write $M_{\ell}(N)$ and $S_{\ell}(N)$. Let $M_{\ell+1/2}(4N)$ and $S_{\ell+1/2}(4N)$ be the space of modular forms and the space of cuspforms of weight $\ell+1/2$ for $\Gamma_0(4N)$, respectively. For $N=1$ we recall the Kohnen \cite{Kohenhalfintegralweight1980} plus space as the subspace
\begin{align}
M_{\ell+1/2}^+(4):= \{f = \sum_{\substack{n\ge 0}}c_f(n)q^n\in M_{\ell+1/2}(4)~|~ c_f(n) = 0 \text{ if } (-1)^\ell n \equiv 2, 3 \pmod{4}\},
\end{align}
and put $S_{\ell+1/2}^+(4):=M_{\ell+1/2}^+(4)\cap S_{\ell+1/2}(4)$. Let $D$ be a fundamental discriminant (i.e. $D=1$ or is the discriminant of a quadratic field) such that $(-1)^{\ell}D>0$. Following Kohnen \cite[p.~251]{Kohenhalfintegralweight1980}, for $f(z)=\sum_{n\geq0}c_{f}(n)q^n\in M_{\ell+1/2}^{+}(4)$, we define its $D$-th Shimura lift as
\begin{align} 
    \mathcal{S}_{D}\left(\sum_{n\ge 0}c_f(n)q^n\right) := \frac{c_f(0)}{2}L_D(1-k)+\sum_{n\ge 1}\left(\sum_{d|n}\left(\frac{D}{d}\right)d^{k-1}c_f\left(|D|\frac{n^2}{d^2}\right)\right)q^n,\label{eq:defofshimuralift}
\end{align}
where $\legendre{D}{\cdot}$ is the Kronecker symbol.
It is known that  $\mathcal{S}_D$ maps $M_{\ell+1/2}^+(4)$ to $M_{2\ell}(1)$ and $S_{\ell+1/2}^+(4)$ to $S_{2\ell}(1)$, and commutes with the action of Hecke operators; see Kohnen \cite[Theorem 1]{Kohenhalfintegralweight1980} and Shimura \cite{Shimura1976}. 

Now we recall the Selberg identity on the Shimura lift. Let $\theta(z)=\sum_{n\in\mathbb{Z}}q^{n^2}\in M_{1/2}(4)$ be the Jacobi theta function. Selberg observed that for a normalized Hecke eigenform $f(z)\in M_{\ell}(1)$ with $a_f(1)=1$, the first Shimura lift provides the identity
\begin{align}
    \mathcal{S}_1(f(4z)\theta(z))=f(z)^2\in M_{2\ell}(1). \label{eq:selberg}
\end{align}

For a fundamental discriminant $D$ with $(-1)^{k} D>0$ with $k\ge4$ an integer, if one defines  
\begin{align}\mathscr{F}_D(z)&:=\Tr^D_1(G_{k,D}(z)^2)\in M_{2k}(1)\\
\mathscr{G}_{D}(z)&:=\frac32\left(1-\legendre{D}{2}2^{-k}\right)^{-1}\pr^+\Tr_4^{4D}(G_{k,4D}(4z)\theta(|D|z))\in M_{k+1/2}^{+}(4),\end{align} 
then Kohnen-Zagier  \cite[Proposition 3]{Kohnen-Zagier1981} proved the following generalization of \eqref{eq:selberg}:
\begin{align}\mathcal{S}_D(\mathscr{F}_D(z))=\mathscr{G}_D(z).\label{eq:liftkohnen-zagier}\end{align}
We must make several definitions for the above to make sense. The Eisenstein series $G_{k,D}$ and $G_{k,4D}$ are given by \cite[p.~185]{Kohnen-Zagier1981}
\begin{align}
    G_{k,D}(z):=&\frac{L_{D}(1-k)}{2}+\sum_{n=1}^{\infty}\left(\sum_{d\mid n}\legendre{D}{d}d^{k-1}\right)q^n\in M_{k}\left(|D|,\legendre{D}{\cdot}\right),\label{eq:G_k,D}\\
    G_{k,4D}(z):=&G_{k,D}(4z)-2^{-k}\legendre{D}{2}G_{k,D}(2z) \in M_{k}\left(4|D|,\legendre{D}{\cdot}\right), \label{eq:G_k,4D}
\end{align}
where $L_D(s)=\sum_{n\ge1}\legendre{D}{n}n^{-s}$. The operator $\pr^{+}$ is the projection from $M_{\ell+1/2}(4)$ to $M_{\ell+1/2}^{+}(4)$ given by \cite[p.~195]{Kohnen-Zagier1981}
\begin{align}
    (\pr^{+}g)(z)=\frac{1-(-1)^\ell i}{6}(\Tr^{16}_4Vg)(z)+\frac{1}{3}g(z),\label{eq:defofprojection}
\end{align} 
where $V(g)(z)=g(z+\frac{1}{4})=g(z)|_{k+1/2}\begin{bsmallmatrix}
    4 & 1\\ 0& 4
\end{bsmallmatrix}$, using the notation of \eqref{eq:tracemap} and \eqref{eq:slashoperator}. 
Additionally, for $N\mid M$, $\Tr_N^M$  is the trace map 
\begin{align} \label{eq:tracemap}
    \Tr_N^M: M_{m}(M)\rightarrow M_m(N),\quad g\mapsto \sum_{\gamma\in\Gamma_0(M)\backslash\Gamma_0(N)}g|_m\gamma,
\end{align}
where for any real number $m$ and $\gamma=\begin{bsmallmatrix}
    a&b\\c&d
\end{bsmallmatrix}\in \Gl_2^+(\BR)$ we define the slash operator \cite[Theorem 7.1]{Cohen'smodularformC_k}
\begin{align} \label{eq:slashoperator}
\left(g|_m\gamma\right)(z)=\det(\gamma)^{m/2} (cz+d)^{-m} g\left(\frac{az+b}{cz+d}\right).
\end{align}

On the other hand, the Selberg identity \eqref{eq:selberg} for the first Shimura lift has been generalized to the setting of Rankin-Cohen brackets. Let us first introduce the definition of Rankin-Cohen brackets for  modular forms.
\begin{definition}\label{def:rankincohen}
Let $f(z)\in M_{a}(\Gamma)$ and $g(z)\in M_b(\Gamma)$ be modular forms for some congruence subgroup $\Gamma$ of weights $a$ and $b$, respectively. For an nonnegative integer $e$, we define the $e$-th Rankin-Cohen bracket as 
\begin{align}
    [f(z),g(z)]_e := \sum_{r=0}^e (-1)^r\binom{e+a-1}{e-r}\binom{e+b-1}{r}f(z)^{(r)}g(z)^{(e-r)},\label{eq:defofrankin-cohenbracket}
\end{align}
where $f(z)^{(r)}$ is the $r$-th normalized derivative $f(z)^{(r)}:=\frac{1}{(2\pi i)^r}\frac{d^r f(z)}{dz^r}$ of $f$. Here $a,b$ can be in $\frac{1}{2}\mathbb{Z}$ and the binomial coefficients are defined through gamma functions. Moreover, $[f,g]_e\in M_{a+b+2e}(\Gamma)$ and $[f,g]_e\in S_{a+b+2e}(\Gamma)$ for $e>1$; see \cite[Theorem 7.1]{Cohen'smodularformC_k}. We remark that the Rankin-Cohen bracket defined in Zagier \cite[(73)]{Zagier1976} is related to \eqref{eq:defofrankin-cohenbracket} through $F_{e}^{(a,b)}(f(z),g(z))= (-2\pi i)^e e![f(z),g(z)]_e$; see  \cite[(1.1)]{Xue2203}.
\end{definition}

Choie-Kohnen-Zhang \cite{choie2024rankincohen} and Xue \cite{Xue-Selbergidentity} independently showed that if $k \ge 4$ is an even integer, $f(z)\in M_{k}(1)$ is a normalized Hecke eigenform, and $e$ is a nonnegative integer, then
\begin{equation}
    \mathcal{S}_1([f(4z), \theta(z)]_e)=\frac{\binom{k+e-1}{e}}{\binom{k+2e-1}{2e}}[f(z),f(z)]_{2e}.\label{eq:levelonerankincohenlift}
\end{equation}
\noindent Note that \eqref{eq:levelonerankincohenlift} was also proved in \cite[Proposition B1]{Popa2011} when $f$ is an Eisenstein series.  Let $k\geq4$ and $e>0$ be integers with $\ell=k+2e$ and let $D$ be a fundamental discriminant such that $(-1)^{\ell}D>0$. 
We introduce functions
\begin{align} 
    \mathcal{F}_{D,k,e}(z):=&\Tr^D_1([G_{k,D}(z),G_{k,D}(z)]_{2e})\in S_{2\ell}(1),\label{eq:F_D,k,e}\\
     \mathcal{G}_{D, k,e}(z):=&\frac32\left(1-\legendre{D}{2}2^{-k}\right)^{-1}\pr^+\Tr^{4D}_4\left[G_{k,4D}(z),\theta(|D|z)\right]_e \in S^+_{\ell+1/2}(4).\label{eq:G_D,k,e}
\end{align}
Note that both $\mathcal{F}_{D,k,e}(z)$ are $\mathcal{G}_{D, k,e}(z)$ are cuspforms, since $e>0$.
Now, we state our first main result, which can be viewed as a combination of \eqref{eq:liftkohnen-zagier} and \eqref{eq:levelonerankincohenlift}.
\begin{theorem}\label{thm:lift}
Let $D$ be an odd fundamental discriminant such that $(-1)^\ell D>0$ and let $k\ge 4$ and $e>0$ be integers such that $k+2e=\ell$. Then we have the identity
\begin{align}
\mathcal{S}_D\left(\mathcal{G}_{D,k,e}\right)=|D|^e\frac{\binom{k+e-1}{e}}{\binom{k+2e-1}{2e}}\mathcal{F}_{D,k,e}.\label{eq:liftofG}
\end{align}
\end{theorem}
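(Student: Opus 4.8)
The plan is to prove \eqref{eq:liftofG} by comparing the Fourier expansions of the two cusp forms in $S_{2\ell}(1)$, exploiting that the identity is built to generalize simultaneously the Kohnen--Zagier identity \eqref{eq:liftkohnen-zagier} (the $e=0$ behaviour, with the self-bracket of $G_{k,D}$ playing the role of $G_{k,D}^2$) and the level-one Rankin--Cohen identity \eqref{eq:levelonerankincohenlift} (the $D=1$ behaviour). Accordingly I would merge the two techniques: the trace-and-projection bookkeeping that Kohnen--Zagier use to pass from \eqref{eq:selberg} to \eqref{eq:liftkohnen-zagier}, and the Rankin--Cohen bracket combinatorics that govern \eqref{eq:levelonerankincohenlift}. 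The key simplification here is that all building blocks are explicit: by \eqref{eq:G_k,D} and \eqref{eq:G_k,4D} the coefficients of $G_{k,D}$ and $G_{k,4D}$ are twisted divisor sums, $\theta(|D|z)=\sum_{r\in\BZ}q^{|D|r^2}$ is supported on the values $|D|r^2$, and $\mathcal{S}_D$ acts on Fourier coefficients by the divisor sum in \eqref{eq:defofshimuralift} (with parameter $\ell$, so with $d^{\ell-1}$). Thus, unlike the case of general eigenforms, no holomorphic projection is needed and everything reduces to manipulating explicit arithmetic sums.

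First I would unwind the left-hand side. Writing $[G_{k,4D},\theta(|D|\cdot)]_e$ via \eqref{eq:defofrankin-cohenbracket}, its $N$-th coefficient is a sum over $m+|D|r^2=N$ of the $m$-th coefficient of $G_{k,4D}$ times a fixed degree-$e$ bracket polynomial in $(m,|D|r^2)$. The next task is to track this through $\Tr^{4D}_4$, the plus-projection $\pr^+$ from \eqref{eq:defofprojection}, and finally $\mathcal{S}_D$. Here I would lean on Kohnen's theory: since $\mathcal{S}_D$ only reads the plus-space coefficients indexed by $|D|n^2/d^2$, the projection and trace need only be controlled on those indices, exactly as in the $e=0$ computation behind \eqref{eq:liftkohnen-zagier}. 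This should reduce $\mathcal{S}_D(\mathcal{G}_{D,k,e})$ to an explicit series whose $n$-th coefficient is a twisted divisor sum $\sum_{d\mid n}\legendre{D}{d}d^{\ell-1}(\cdots)$ applied to the bracket coefficients of $[G_{k,4D},\theta(|D|\cdot)]_e$ at the fundamental-type indices, with the normalizing constant $\tfrac32(1-\legendre{D}{2}2^{-k})^{-1}$ carried along.

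In parallel I would expand the right-hand side: the coefficients of $[G_{k,D},G_{k,D}]_{2e}$ are convolutions $\sum_{m_1+m_2=N}Q_{2e}(m_1,m_2)\,a(m_1)a(m_2)$ of the coefficients $a(n)$ of $G_{k,D}$ against the order-$2e$ self-bracket polynomial $Q_{2e}$, after which $\Tr^D_1$ (legitimate because $[G_{k,D},G_{k,D}]_{2e}$ has trivial nebentypus $\legendre{D}{\cdot}^2$) folds the level-$|D|$ expansion down to level one. The goal is then to verify that the two divisor-sum expressions coincide after multiplication by $|D|^e\binom{k+e-1}{e}/\binom{k+2e-1}{2e}$. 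The heart of the matching is a binomial/hypergeometric identity relating the order-$e$ bracket polynomial with weights $(k,\tfrac12)$, summed against the $\theta$-structure and the Shimura divisor sum, to the order-$2e$ self-bracket polynomial with weights $(k,k)$; this is precisely the mechanism producing the constant in \eqref{eq:levelonerankincohenlift}, and I expect the factor $|D|^e$ to emerge from the scaling $\theta(|D|\cdot)$, whose normalized derivatives insert the homogenizing powers $(|D|r^2)^j$ into the bracket.

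The main obstacle will be running the trace maps $\Tr^{4D}_4$ and $\Tr^D_1$ and the projection $\pr^+$ simultaneously with the Rankin--Cohen combinatorics, i.e.\ verifying that the non-fundamental indices and the various cross terms reorganize into exactly the twisted divisor sum dictated by \eqref{eq:defofshimuralift}, while keeping the normalizing constant honest through $\pr^+$. I would isolate the purely combinatorial identity as a separate lemma, proving it by induction on $e$ or via Cohen--Kuznetsov generating series, and handle the trace on the Eisenstein building blocks using the explicit coefficient formulas of \cite{Kohnen-Zagier1981}. As a sanity check and a potential shortcut, since $\mathcal{S}_D$ is Hecke-equivariant and injective on the plus space it suffices to match only finitely many Fourier coefficients; and on each eigencomponent $g$ one has that $\mathcal{S}_D(g)$ and $\mathcal{S}_1(g)$ are proportional (with ratio $c_g(|D|)/c_g(1)$), which offers a route to reduce to the already-known $D=1$ identity, with the Eisenstein contributions covered by \cite{Popa2011}.
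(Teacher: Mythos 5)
Your main plan---unwinding both sides into explicit Fourier coefficients (twisted divisor sums for the Eisenstein series, the support of $\theta(|D|z)$ on $|D|r^2$, the divisor sum defining $\mathcal{S}_D$), pushing the bracket through $\Tr^{4D}_4$, $\pr^+$ and $\mathcal{S}_D$, and reducing the match to a binomial identity between the order-$e$ bracket polynomial of weights $(k,\tfrac12)$ and the order-$2e$ self-bracket of weights $(k,k)$---is exactly the paper's proof, which isolates that identity as a Vandermonde-type combinatorial lemma and devotes its technical sections to precisely the trace/projection bookkeeping you identify as the main obstacle (in particular showing that $\pr^+$ together with the normalizing constant cancels, so that $\mathcal{G}_{D,k,e}=\Tr^{4D}_4[G_{k,D}(4z),\theta(|D|z)]_e$, and then decomposing both traces over factorizations $D=D_1D_2$ via explicit cusp expansions). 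One caution: the ``shortcut'' you float at the end, reducing to the $D=1$ identity via the proportionality $\mathcal{S}_D(g)=c_g(|D|)f$ on eigencomponents, does not work---$\mathcal{G}_{D,k,e}$ and $\mathcal{F}_{D,k,e}$ are genuinely different forms for different $D$, so the $D=1$ identity carries no information about them, recovering the general case this way would require the eigen-decomposition of $\mathcal{G}_{D,k,e}$ (i.e.\ the Petersson inner product formulas, which is the content one is trying to establish), and the ratio $c_g(|D|)/c_g(1)$ is undefined on eigenforms with $c_g(1)=0$.
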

We have required that $e>0$ because the case $e=0$ is exactly \eqref{eq:liftkohnen-zagier}. Our next main result concerns the  nonvanishing of twisted central values of $L$-functions associated to Hecke eigenforms. Before stating the precise result, let us first introduce some notation.

\begin{definition} \label{def:subspace}
Let $D$ be a fundamental discriminant such that $(-1)^{\ell}D>0$.
\begin{enumerate}
\item Let $S_{2\ell}^{0,D}(1)$ denote the subspace of $S_{2\ell}(1)$ generated by normalized Hecke eigenforms $f$ with nonzero central twisted $L$-values $L(f,D,\ell)$, where $L(f,D,s)=\sum_{n\ge1}\legendre{D}{n}a_f(n) n^{-s}$ is the $L$-function of $f$ twisted by $\legendre{D}{\cdot}$. We write $S^{-,D}_{2\ell}(1)$ for the orthogonal complement of $S_{2\ell}^{0,D}(1)$, which is spanned by Hecke eigenforms with vanishing central twisted $L$-values.
\item Let $S_{\ell+1/2}^{0,D}(4)$ be the subspace of $S^+_{\ell+1/2}(4)$ generated by Hecke eigenforms $g=\sum_{n\geq1}c_g(n)q^n$ with $c_g(|D|)\ne0$. We write $S^{-,D}_{\ell+1/2}(4)$ for the orthogonal complement of $S_{\ell+1/2}^{0,D}(4)$, which is spanned by Hecke eigenforms $g=\sum_{n\geq1}c_g(n)q^n$ with $c_g(|D|)=0$.
\end{enumerate}
\end{definition}

The twisted $L$-function $L(f,D,s)$, originally defined for $\re(s)\gg0$, can be analytically continued to the whole complex plane, and for a Hecke eigenform $f\in S_{2\ell}(1)$ satisfies \cite[Lemma 9.2]{Ono2004}:
\begin{align}
 \Lambda(f,D,s)=(-1)^{\ell} \legendre{D}{-1} \Lambda(f,D,2\ell-s),   
\end{align}
where $\Lambda(f,D,s)=(2\pi)^{-s}\Gamma(s) L(f,D,s)$ is the completed twisted $L$-function of $f$. Since $\legendre{D}{-1}$ is the sign of $D$, the assumption $(-1)^{\ell}D>0$ implies that the functional equation for $L(f,D,s)$ has a positive sign. Therefore, the subspace $S_{2\ell}^{0,D}(1)$ in Definition \ref{def:subspace} (1) is not trivially zero. It is speculated that the central $L$-value $L(f,D,\ell)$ is nonvanishing for every Hecke eigenform $f\in S_{2\ell}(1)$. Thus, it is believed that $S_{2\ell}(1)=S^{0,D}_{2\ell}(1)$ for every fundamental discriminant $D$. For further discussion, see Section \ref{sec:discussion}.

Our second main result gives an explicit construction of a set of generators for the subspaces $S_{2\ell}^{0,D}(1)$ and $S_{\ell+1/2}^{0,D}(4)$. We hope this result would help investigate the aforementioned speculation on the nonvanishing of twisted central $L$-values. Furthermore, we prove that the $D$-th Shimura lift $\mathcal{S}_D$ gives an isomorphism between $S_{\ell+1/2}^{0,D}(4)$ and $S_{2\ell}^{0,D}(1)$, which generalizes Kohnen's results  \cite[Theorem 2]{Kohenhalfintegralweight1980} and \cite[Proposition 3.3]{Xue-Selbergidentity}.
\begin{theorem}\label{thm:gspan}
Let $D$ be an odd fundamental discriminant with $(-1)^{\ell}D>0$. Then 
\begin{align}
    S_{\ell+1/2}^{0,D}(4)=\Span\{\mathcal{G}_{D,k,e}\}_{k+2e=\ell},\quad \text{and}\quad S_{2\ell}^{0, D}(1)=\Span\{\mathcal{F}_{D,k,e}\}_{2k+4e=2\ell},
\end{align}
where $k\ge4$ and $e>0$. Additionally, the restricted $D$-th Shimura lift
\begin{align}
    S_D: S_{\ell+1/2}^{0,D}(4) \rightarrow S_{2\ell}^{0, D}(1)
\end{align}
is an isomorphism.
\end{theorem}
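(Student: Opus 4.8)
The plan is to establish the isomorphism first, then obtain both spanning statements from a single Petersson inner product computation on the half-integral weight side, transporting the result to weight $2\ell$ through $\mathcal{S}_D$ and Theorem \ref{thm:lift}. For the isomorphism I would begin from Kohnen's theorem \cite{Kohenhalfintegralweight1980} that the Shimura correspondence is a Hecke-equivariant isomorphism $S^+_{\ell+1/2}(4)\xrightarrow{\sim}S_{2\ell}(1)$, under which a normalized Hecke eigenform $g=\sum_{n\ge1}c_g(n)q^n$ maps to an eigenform $f_g$ with $\mathcal{S}_D(g)=c_g(|D|)f_g$. The Kohnen--Zagier formula \cite{Kohnen-Zagier1981} (a case of Waldspurger's theorem) writes $|c_g(|D|)|^2$ as a nonzero multiple of $L(f_g,D,\ell)$, so $c_g(|D|)\ne0\iff L(f_g,D,\ell)\ne0$. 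Hence $\mathcal{S}_D$ annihilates precisely $S^{-,D}_{\ell+1/2}(4)$ and sends the eigenform basis of $S^{0,D}_{\ell+1/2}(4)$ bijectively, up to nonzero scalars, to that of $S^{0,D}_{2\ell}(1)$; this yields the claimed isomorphism $\mathcal{S}_D\colon S^{0,D}_{\ell+1/2}(4)\to S^{0,D}_{2\ell}(1)$, independently of the spanning assertions.

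The engine is the kernel identity
\[
\langle g,\mathcal{G}_{D,k,e}\rangle=\delta_{k,e}(g)\,\overline{c_g(|D|)},\qquad \delta_{k,e}(g)\ne0,
\]
for every normalized eigenform $g\in S^+_{\ell+1/2}(4)$. I would prove it by moving the operators onto $g$: since $\pr^+$ is the self-adjoint projection onto the plus space, $\langle g,\pr^+h\rangle=\langle g,h\rangle$ for $g\in S^+_{\ell+1/2}(4)$, and $\Tr_4^{4D}$ is adjoint, up to the index $[\Gamma_0(4):\Gamma_0(4|D|)]$, to the inclusion, so $\langle g,\mathcal{G}_{D,k,e}\rangle$ reduces to a constant multiple of $\langle g,[G_{k,4D}(z),\theta(|D|z)]_e\rangle$ over $\Gamma_0(4|D|)$. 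Unfolding the Eisenstein series $G_{k,4D}$ by the Rankin--Selberg method, against $g$ and the normalized derivatives of $\theta(|D|z)$, collapses the theta coefficients (supported on squares) and isolates the $|D|$-th Fourier coefficient of $g$, giving the displayed proportionality with an explicit factor $\delta_{k,e}(g)$. In particular $\langle g,\mathcal{G}_{D,k,e}\rangle=0$ whenever $c_g(|D|)=0$, so each $\mathcal{G}_{D,k,e}\perp S^{-,D}_{\ell+1/2}(4)$, that is $\mathcal{G}_{D,k,e}\in S^{0,D}_{\ell+1/2}(4)$.

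Projecting $\mathcal{G}_{D,k,e}$ onto the eigenbasis, its component along $g$ is proportional to $\overline{\delta_{k,e}(g)}\,c_g(|D|)$, so $\Span\{\mathcal{G}_{D,k,e}\}=S^{0,D}_{\ell+1/2}(4)$ if and only if, for the eigenforms $g$ with $c_g(|D|)\ne0$, the vectors $\big(\delta_{k,e}(g)\big)_{k+2e=\ell,\,k\ge4,\,e>0}$ are linearly independent. I would establish this by making $\delta_{k,e}(g)$ explicit and exhibiting, as $e$ varies with $k=\ell-2e$, a triangular or Vandermonde structure in a parameter attached to $g$ (e.g. a Hecke eigenvalue), so that distinctness of the eigenforms' Hecke data, together with the fact that the number of admissible $e$ exceeds $\dim S^{0,D}_{\ell+1/2}(4)$, forces full rank. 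Granting $\Span\{\mathcal{G}_{D,k,e}\}=S^{0,D}_{\ell+1/2}(4)$, the second spanning statement follows formally: by Theorem \ref{thm:lift}, $\mathcal{S}_D(\mathcal{G}_{D,k,e})=\kappa_{k,e}\mathcal{F}_{D,k,e}$ with $\kappa_{k,e}=|D|^e\binom{k+e-1}{e}/\binom{k+2e-1}{2e}\ne0$, and since $\mathcal{S}_D$ is the isomorphism above,
\[
\Span\{\mathcal{F}_{D,k,e}\}=\mathcal{S}_D\big(\Span\{\mathcal{G}_{D,k,e}\}\big)=\mathcal{S}_D\big(S^{0,D}_{\ell+1/2}(4)\big)=S^{0,D}_{2\ell}(1).
\]

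I expect the main obstacle to be the nondegeneracy underlying the spanning: the proportionality constants $\delta_{k,e}(g)$ must be computed precisely enough, not merely shown nonzero, to guarantee that the generators span rather than merely land in the correct subspace. A convenient alternative, and a cross-check, is to run the analogous unfolding on the integral side, using adjointness of $\Tr_1^D$ to reduce $\langle f,\mathcal{F}_{D,k,e}\rangle$ to $\langle f,[G_{k,D}(z),G_{k,D}(z)]_{2e}\rangle$ over $\Gamma_0(|D|)$ and unfolding one copy of $G_{k,D}$, which yields $\langle f,\mathcal{F}_{D,k,e}\rangle=\gamma_{k,e}(f)L(f,D,\ell)$ with $\gamma_{k,e}(f)\ne0$; this gives $\mathcal{F}_{D,k,e}\in S^{0,D}_{2\ell}(1)$ directly and provides a second route to the same linear-independence input.
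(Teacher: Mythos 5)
Your overall architecture is close to the paper's: the isomorphism is proved exactly as in Proposition \ref{prop:isomorphism} (Kohnen's $\mathcal{S}_D(g)=c_g(|D|)f_g$ plus the Kohnen--Zagier equivalence of the two nonvanishing conditions), the inner--product identities you invoke are Propositions \ref{prop:RankinSelberghalfintegral} and \ref{prop:FdkeInnerProduct}, and the transfer of one spanning statement to the other via Theorem \ref{thm:lift} mirrors Proposition \ref{prop:GDkespan} (the paper runs it in the opposite direction, proving the $\mathcal{F}$-spanning first and deducing the $\mathcal{G}$-spanning by a dimension count, but that difference is cosmetic). However, there is a genuine gap at precisely the point you flag as ``the main obstacle'': you reduce spanning to the full rank of the matrix $\bigl(\delta_{k,e}(g_j)\bigr)_{e,j}$ and propose to get it from ``a triangular or Vandermonde structure in a parameter attached to $g$ (e.g.\ a Hecke eigenvalue),'' together with the observation that the number of admissible $e$ exceeds the dimension. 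Neither ingredient works. Having more generators than the dimension says nothing about rank, and the essential $j$-dependence of $\delta_{k,e}(g_j)$ is through the special values $L(f_j,2k+2e-1)=L(f_j,2\ell-2e-1)$ as $e$ varies; these are not polynomial, triangular, or Vandermonde in any single Hecke eigenvalue, so distinctness of Hecke data gives you nothing.

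The actual nondegeneracy input is the Eichler--Shimura theory of period polynomials. Via \eqref{eq:defofperiods}, $L(f_j,2k+2e-2+1)$ is the even period $r_{2k+2e-2}(f_j)$, and as $e$ ranges over $1\le e\le\lfloor(\ell-4)/2\rfloor$ the index $t=2k+2e-2$ sweeps out all even $t$ with $\ell+2\le t\le 2\ell-4$. Following Kohnen's trick, one forms the auxiliary form $F=\sum_j c_j L(g_j,D,\ell)g_j$ and shows that orthogonality to all the generators forces $r_t(F)=0$ for all such $t$; Lemma \ref{lem:periodvanishing} then uses the Eichler--Shimura relations \eqref{eq:eichlershimurafunctionaleq} and \eqref{eq:eichlershimura2t} to propagate this vanishing to all even periods $2\le t\le 2\ell-4$, whence $F=0$ and the coefficients vanish. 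Without this (or an equivalent) argument, your proof establishes only the containments $\Span\{\mathcal{G}_{D,k,e}\}\subseteq S^{0,D}_{\ell+1/2}(4)$ and $\Span\{\mathcal{F}_{D,k,e}\}\subseteq S^{0,D}_{2\ell}(1)$, not the equalities; your proposed ``cross-check'' on the integral-weight side has the same defect, since nonvanishing of each individual constant $\gamma_{k,e}(f)$ does not yield spanning.
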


We assume $D$ to be odd throughout the paper in order to avoid the technical complications caused by even $D$, although we believe our results continue to hold in this case.

This paper is organized as follows. 
Section \ref{sec:nonvanishing} discusses the main results of this paper.  The proof of Theorem \ref{thm:lift} is based on the same idea as the proof of \eqref{eq:levelonerankincohenlift} (see \cite{choie2024rankincohen} and \cite{Xue-Selbergidentity}), but requires explicit computations of the Fourier coefficients of both sides of \eqref{eq:liftofG}. Most of the technical details required for the proof of Theorem \ref{thm:lift} are presented in Section \ref{sec:fourier}. Based on the Petersson inner product formalae for $\mathcal{F}_{D,k,e}$ and $\mathcal{G}_{D,k,e}$ derived in Section \ref{sec:rankin}, we explicitly construct a spanning set for $S_{2\ell}^{0,D}(1)$ (Proposition \ref{prop:fspan}). We then show that the $D$-th Shimura lift is an isomorphism from $S^{0,D}_{\ell+1/2}(4)$ to $S_{2\ell}^{0,D}(1)$ (Proposition \ref{prop:isomorphism}). Finally, using these results 
we prove Proposition \ref{prop:GDkespan}, explicitly constructing a spanning set for $S_{\ell+1/2}^{0,D}(4)$ and finishing the proof of Theorem \ref{thm:gspan}.

The remaining sections are dedicated to proofs of the results needed in Section \ref{sec:nonvanishing}. Section \ref{sec:projection} proves an alternate formula for $\mathcal{G}_{D,k,e}$ which we use to compute its Fourier coefficients in Section \ref{sec:fourier}. Section \ref{sec:Eisenstein} recalls the theory of Eisenstein series, which will be useful to the Fourier development of $\mathcal{F}_{D,k,e}$ and $\mathcal{G}_{D,k,e}$ in Section \ref{sec:fourier}.  Assuming those two sections, Section \ref{sec:rankin} derives Petersson inner product formulae for $\mathcal{F}_{D,k,e}$ and $\mathcal{G}_{D,k,e}$ via the Rankin-Selberg convolution. In Section \ref{sec:fourier}, we carry out the computations of Fourier coefficients for Theorem \ref{thm:lift}. Section \ref{sec:discussion} discusses the relationship between these results and their potential applications to the nonvanishing of twisted central $L$-values of Hecke eigenforms in $S_{2\ell}(1)$.

\section{Selberg identity and spanning sets of subspaces}\label{sec:nonvanishing}

This section proves our main results, assuming the necessary results to be proved later. We begin by proving Theorem \ref{thm:lift}, a generalization of the Selberg identity.

\begin{proof}[Proof of Theorem \ref{thm:lift}]

Recall that $\mathcal{G}_{D,k,e}$ \eqref{eq:G_D,k,e} and $\mathcal{F}_{D,k,e}$ \eqref{eq:F_D,k,e} are cuspforms.  Write 
$$\mathcal{S}_D(\mathcal{G}_{D,k,e}(z))=\sum_{n\geq1}g_{D,k,e}(n)q^n\quad{\rm and}\quad\mathcal{F}_{D,k,e}(z)=\sum_{n\geq1}f_{D,k,e}(n)q^n.$$ Comparing the Fourier coefficients $f_{D,k,e}(n)$ and $g_{D,k,e}(n)$ that are respectively given by Lemma \ref{lem:ffouriercoefficient} and Lemma \ref{lem:gfouriercoefficients}, it suffices to show for each nonnegative integer pair $(a_1, a_2)$ with $a_1+a_2=n|D_1|$ that
    \begin{align}
        &\binom{k+e-1}{e}\sum_{r=0}^{2e}(-1)^ra_1^ra_2^{2e-r}\binom{2e+k-1}{2e-r}\binom{2e+k-1}{r}\\
        &=\binom{k+2e-1}{2e}\sum_{r+s=e}(-1)^r\binom{k+e-1}{s}\binom{e-1/2}{r}4^r(a_2-a_1)^{2s}(a_1a_2)^r.\label{eq:identitycom}
    \end{align}

Without loss of generality we may assume that $R\leq S$ and compare the coefficients of the monomial $a_1^Ra_2^S$ of the two sides of \eqref{eq:identitycom}. The $a_1^Ra_2^S$-coefficient on the left hand side of \eqref{eq:identitycom} is 
  \begin{align}
      (-1)^R\binom{k+e-1}{e}\binom{2e+k-1}{2e-R}\binom{2e+k-1}{R},
  \end{align}
  and the right hand side of \eqref{eq:identitycom} has $a_1^Ra_2^S$-coefficient 
  \begin{align}
      &\binom{k+2e-1}{2e}\sum_{r=0}^R(-1)^r\binom{k+e-1}{e-r}\binom{e-1/2}{r}4^r\binom{2e-2r}{R-r}(-1)^{R-r}\\=&(-1)^R\binom{k+2e-1}{2e}\sum_{r=0}^R\binom{k+e-1}{e-r}\binom{e-1/2}{r}4^r\binom{2e-2r}{R-r}.
  \end{align}
Using Lemma \ref{lem:combinatorialidentity}, we finish the proof of Theorem \ref{thm:lift}.    
\end{proof}
\begin{lemma}\label{lem:combinatorialidentity}
  Let $R\leq e$ be nonnegative and $k\geq4$. Then we have the following identity
    \begin{equation*}
        \binom{k+e-1}{e}\binom{k+2e-1}{2e-R}\binom{k+2e-1}{R}=\binom{k+2e-1}{2e}\sum_{r=0}^R 4^r\binom{k+e-1}{e-r}\binom{e-1/2}{r}\binom{2e-2r}{R-r},
    \end{equation*}
    where fractional binomial coefficients are defined by the $\Gamma$ function.
\end{lemma}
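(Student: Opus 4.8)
The plan is to read the claimed equality as a polynomial identity in $k$ and to reduce the right-hand sum to a single application of the Chu--Vandermonde convolution. The only ingredient on the right that is not a ratio of ordinary factorials is the fractional binomial $\binom{e-1/2}{r}$, so the first step is to clear it. Using the Legendre duplication formula (equivalently, a direct manipulation of the product $(2e-1)(2e-3)\cdots(2e-2r+1)$ obtained by splitting $(2e)!/(2e-2r)!$ into its odd and even factors), one checks that
\[
4^{r}\binom{e-1/2}{r}=\frac{(2e)!\,(e-r)!}{e!\,r!\,(2e-2r)!}.
\]
Substituting this into the summand and combining it with $\binom{k+e-1}{e-r}=\frac{(k+e-1)!}{(e-r)!\,(k+r-1)!}$ and $\binom{2e-2r}{R-r}=\frac{(2e-2r)!}{(R-r)!\,(2e-r-R)!}$, the factors $(e-r)!$ and $(2e-2r)!$ cancel, so each term of the sum collapses to
\[
4^{r}\binom{k+e-1}{e-r}\binom{e-1/2}{r}\binom{2e-2r}{R-r}=\frac{(2e)!\,(k+e-1)!}{e!}\cdot\frac{1}{(k+r-1)!\,r!\,(R-r)!\,(2e-r-R)!}.
\]

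With this simplification in hand, I would pull out the $r$-independent prefactor and use the identity $\binom{k+2e-1}{2e}\cdot\frac{(2e)!\,(k+e-1)!}{e!}=(k+2e-1)!\binom{k+e-1}{e}$ to cancel $\binom{k+e-1}{e}$ from both sides and then divide through by $(k+2e-1)!$. The lemma thereby becomes equivalent to the purely factorial statement
\[
\frac{(k+2e-1)!}{(2e-R)!\,(k+R-1)!\,R!\,(k+2e-1-R)!}=\sum_{r=0}^{R}\frac{1}{(k+r-1)!\,r!\,(R-r)!\,(2e-r-R)!}.
\]
Setting $M:=k+2e-1$ and using $(k+r-1)+(2e-r-R)=M-R$, each summand factors as $\frac{1}{R!\,(M-R)!}\binom{R}{r}\binom{M-R}{2e-R-r}$, so the right-hand side equals $\frac{1}{R!\,(M-R)!}\sum_{r=0}^{R}\binom{R}{r}\binom{M-R}{2e-R-r}$. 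This last sum is exactly the Vandermonde convolution with top indices $R$ and $M-R$, hence equals $\binom{M}{2e-R}$; expanding $\binom{M}{2e-R}=\frac{(k+2e-1)!}{(2e-R)!\,(k+R-1)!}$ and restoring the prefactor recovers the left-hand side verbatim.

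The main obstacle, and really the only subtle point, is clearing the half-integer binomial: once $4^{r}\binom{e-1/2}{r}$ is rewritten through ordinary factorials, everything else is routine cancellation followed by a single Vandermonde summation. I would carry out the factorial bookkeeping explicitly to confirm that no stray powers of $2$ or sign factors survive, and I would note that the hypothesis $R\le e$ guarantees $r\le R\le e\le 2e-R$, so every factorial and binomial above has a nonnegative integer argument and the Vandermonde convolution applies with no boundary corrections. The condition $k\ge4$ is inessential here beyond making $(k-1)!$ and $(k+r-1)!$ genuine factorials; the identity in fact holds as an identity of polynomials (indeed rational functions) in $k$, which one may invoke if one prefers to avoid integrality assumptions.
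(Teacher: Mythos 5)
Your proof is correct and follows essentially the same route as the paper's: clear the half-integer binomial $\binom{e-1/2}{r}$ via the Legendre duplication formula, cancel common factorials to reduce the identity to a purely factorial statement, and finish with a single Chu--Vandermonde convolution. The only difference is cosmetic bookkeeping — you evaluate the convolution as $\binom{k+2e-1}{2e-R}$ where the paper (after the involution $r\mapsto R-r$) obtains the equal coefficient $\binom{k+2e-1}{k+R-1}$.
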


\begin{proof}
    We reproduce the proof of \cite[Proposition 2.1]{Xue-Selbergidentity}. By definition we have
    \begin{equation*}
        \binom{e-1/2}{r}=\frac{\Gamma(e+1/2)}{\Gamma(r+1)\Gamma(e+1/2-r)}.
    \end{equation*}
    By Legendre's duplication formulas we have
    \begin{equation*}
        \Gamma(e+1/2)=\frac{(2e)!}{4^ee!}\sqrt{\pi}, \qquad \Gamma(e-r+1/2)=\frac{(2(e-r))!}{4^{e-r}(e-r)!}\sqrt{\pi}.
    \end{equation*}
    These together yield
    \begin{equation*}
        \binom{e-1/2}{r}=\frac{(2e)!4^{e-r}(e-r)!\sqrt{\pi}}{r!4^ee!(2(e-r))!\sqrt{\pi}}=\frac{(2e)!4^{r}(e-r)!}{r!e!(2(e-r))!},
    \end{equation*}
    which yields the following formula for each term on the left hand side
    \begin{align*}
        4^r\hspace{-1pt}\binom{k+2e-1}{2e}\hspace{-2pt}\binom{k+e-1}{e-r}\hspace{-2pt}\binom{e-1/2}{r}\hspace{-2pt}\binom{2e-2r}{R-r}\hspace{-1pt}=\hspace{-1pt}\frac{(k+2e-1)!(k+e-1)!}{(k-1)!\hspace{-1pt}(k+r-1)!\hspace{-1pt}(R-r)!\hspace{-1pt}(2e-R-r)!e!r!}.
    \end{align*}
    The right hand side expands in to
    \begin{equation*}
        \binom{k+e-1}{e}\binom{k+2e-1}{2e-R}\binom{k+2e-1}{R}=\frac{(k+e-1)!(k+2e-1)!(k+2e-1)!}{e!(k-1)!(2e-R)!(k+R-1)!R!(k+2e-1-R)!}.
    \end{equation*}
    If we cancel $(k+e-1)!(k+2e-1)!$ from both sides, and multiply by $R!(k+2e-R-1)$, we see that it suffices to show
    \begin{equation*}
        \sum_{r=0}^R \binom{R}{R-r}\binom{k+2e-R-1}{k+r-1}=\binom{k+2e-1}{k+R-1}.
    \end{equation*}
    After applying the involution $r\mapsto R-r$, this is then the Vandermonde's identity \cite[p.11]{Riordancombinatoriaidentities}
    \begin{equation*}
        \sum_{j=0}^t \binom{n}{j}\binom{m}{t-j}=\binom{n+m}{t}
    \end{equation*}
    for the case of $n=R$, $m=k+2e-R-1$, and $t=k+R-1$.
\end{proof}
 We now build toward the proof of Theorem \ref{thm:gspan}. We begin by showing that the $D$-th Shimura lift gives rise to an isomorphism between $S_{\ell+1/2}^{0,D}(4)$ and $S_{2\ell}^{0,D}(1)$, which is a generalization of \cite[Theorem 2]{Kohenhalfintegralweight1980} for $D=1$.
\begin{proposition}\label{prop:isomorphism}
    Let $D$ be an odd fundamental discriminant with $(-1)^{\ell}D>0$. Then the $D$-th Shimura lift $\mathcal{S}_D$ restricts to an isomorphism $S_{\ell+1/2}^{0,D}(4) \rightarrow S_{2\ell}^{0,D}(1)$ for all $\ell \geq 6$. 
\end{proposition}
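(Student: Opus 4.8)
The plan is to match the two distinguished eigenform bases under the Shimura correspondence and to convert the two nonvanishing conditions into one another by means of the Kohnen--Zagier formula. First I would fix a basis $\{g_i\}$ of simultaneous Hecke eigenforms of $S^+_{\ell+1/2}(4)$, and let $\{f_i\}\subset S_{2\ell}(1)$ be the associated normalized eigenforms under Kohnen's Hecke-equivariant isomorphism $S^+_{\ell+1/2}(4)\xrightarrow{\sim} S_{2\ell}(1)$ of \cite[Theorem 1]{Kohenhalfintegralweight1980}; by construction $f_i$ has the same Hecke eigenvalues as $g_i$. Since $\mathcal{S}_D$ commutes with all Hecke operators (as recalled after \eqref{eq:defofshimuralift}), $\mathcal{S}_D(g_i)$ is a Hecke eigenform of $S_{2\ell}(1)$ sharing the eigenvalues of $f_i$, and hence is a scalar multiple of $f_i$, level one being multiplicity free.

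To pin down that scalar I would read off the $q^1$-coefficient in the definition \eqref{eq:defofshimuralift}: the $n=1$ term forces $d=1$ and contributes exactly $c_{g_i}(|D|)$. As $f_i$ is normalized with $a_{f_i}(1)=1$, this yields $\mathcal{S}_D(g_i)=c_{g_i}(|D|)\,f_i$. In particular $\mathcal{S}_D(g_i)$ vanishes precisely when $c_{g_i}(|D|)=0$, and is a nonzero multiple of $f_i$ otherwise.

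The bridge between the two subspaces is the Kohnen--Zagier formula \cite[Theorem 1]{Kohnen-Zagier1981}, valid for odd fundamental $D$ with $(-1)^\ell D>0$, which expresses $|c_{g_i}(|D|)|^2$ as a product of $L(f_i,D,\ell)$ with the Petersson norms $\langle g_i,g_i\rangle$ and $\langle f_i,f_i\rangle^{-1}$ together with explicit nonzero gamma and power factors. Since every factor apart from the central $L$-value is nonzero, we get the equivalence $c_{g_i}(|D|)\neq 0 \iff L(f_i,D,\ell)\neq 0$. By Definition \ref{def:subspace} the eigenforms $g_i$ with $c_{g_i}(|D|)\neq0$ span $S^{0,D}_{\ell+1/2}(4)$, while the eigenforms $f_i$ with $L(f_i,D,\ell)\neq0$ span $S^{0,D}_{2\ell}(1)$, so the two index sets coincide. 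Consequently $\mathcal{S}_D$ carries the eigenbasis of $S^{0,D}_{\ell+1/2}(4)$ to the family $\{c_{g_i}(|D|)f_i\}$, a set of nonzero multiples of the eigenbasis of $S^{0,D}_{2\ell}(1)$, and therefore restricts to a linear isomorphism. The hypothesis $\ell\geq6$ merely ensures $S_{2\ell}(1)\neq0$ so that the assertion is nonvacuous, while the oddness of $D$ and the sign condition $(-1)^\ell D>0$ are exactly what make the Kohnen--Zagier identity applicable.

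I expect the only genuine difficulty to lie in invoking the Kohnen--Zagier formula with the correct normalization: one must use the version adapted to the plus space $S^+_{\ell+1/2}(4)$ and to the $D$-th (rather than first) Shimura lift, and verify that the proportionality constant is truly nonzero for odd $D$ under the given sign condition. Once the proportionality $|c_{g_i}(|D|)|^2 \sim L(f_i,D,\ell)$ is in hand, the rest is the elementary basis bookkeeping sketched above.
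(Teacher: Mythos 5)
Your argument is correct and follows essentially the same route as the paper: both rest on the identity $\mathcal{S}_D(g)=c_g(|D|)f$ for a Hecke eigenform $g$ with Shimura-corresponding $f$, together with the Kohnen--Zagier formula equating the nonvanishing of $c_g(|D|)$ with that of $L(f,D,\ell)$. The only cosmetic difference is that you track the eigenbases explicitly (and rederive the scalar $c_{g_i}(|D|)$ from Hecke-equivariance and the first Fourier coefficient), whereas the paper phrases the same content as injectivity plus equality of dimensions.
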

\begin{proof}

Recall that by \cite[Theorem 1]{Kohenhalfintegralweight1980} or \cite[p.~182]{Kohnen-Zagier1981}, if  $g=\sum_{n\ge1} c_g(n)q^n\in S^+_{\ell+1/2}(4)$ is a Hecke eigenform and $f\in S_{2\ell}(1)$ is the normalized Hecke eigenform corresponding to $g$, then $\mathcal{S}_D(g)=c_g(|D|) f$. This means that $\mathcal{S}_D$ is a monomorphism when restricted to $S^{0,D}_{\ell+1/2}(4)$.
Thus, in order to show $\mathcal{S}_D$ restricts to an isomorphism from $S_{\ell+1/2}^{0,D}(4)$ to $ S_{2\ell}^{0,D}(1)$  it suffices to show that $\dim S_{\ell+1/2}^{0,D}(4)=\dim S_{2\ell}^{0,D}(1)$. 

Note that  $\dim S_{2\ell}^{0,D}(1)$ is the number of Hecke eigenforms in $S_{2\ell}(1)$ with nonzero central twisted $L$-value, and $\dim S_{\ell+1/2}^{0,D}(4)$ is the number of Hecke eigenforms in $S^+_{\ell+1/2}(4)$ with nonzero $|D|$-th Fourier coefficient. According to \cite[Theorem 1]{Kohnen-Zagier1981}, these two nonvanishing conditions are the same under the Shimura correspondence, thus we conclude that $\dim S_{\ell+1/2}^{0,D}(4)=\dim S_{2\ell}^{0,D}(1)$. 
\end{proof}
\begin{remark}
In the $\ell = 5,7$ case, the space of cuspforms $S_{2\ell}(1)$ is empty, and so is the space $S^+_{\ell+1/2}(4)$. So this proposition is trivially true. 
\end{remark}
We now construct an explicit spanning set for $S_{2\ell}^{0,D}(1)$. Before doing so, we need to introduce the Period of a modular form. For $f\in S_{2\ell}(1)$ and $0\leq t\leq 2\ell-2$, the $t$-th Period of $f$ is given by 
\begin{align}r_t(f):=\frac{t!}{(-2\pi i)^{t+1}}L(f,t+1).\label{eq:defofperiods}\end{align}
Here the $L$-series of $f(z)=\sum_{n\geq1}a_nq^n$ is $L(f,s)=\sum_{n\geq1}a_nn^{-s}$, which converges for $\re(s)\gg0$ and can be extended analytically to the whole complex plane; for details, see \cite{Manin1973}.  
\begin{proposition}\label{prop:fspan}
    The set $\{\mathcal{F}_{D, k, e}\}_{2k+4e=2\ell}$ for $1\leq e \leq \lfloor \frac{\ell-4}{2}\rfloor$ spans $S_{2\ell}^{0,D}(1)$, for all $\ell \geq 6$. %
\end{proposition}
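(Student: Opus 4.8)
The plan is to detect $S_{2\ell}^{0,D}(1)$ spectrally, using the Petersson inner product formula for $\mathcal{F}_{D,k,e}$ established in Section~\ref{sec:rankin}. First I would fix an orthogonal basis $f_1,\dots,f_d$ of $S_{2\ell}(1)$ consisting of normalized Hecke eigenforms (which have real Fourier coefficients at level $1$), and set $I^0=\{i : L(f_i,D,\ell)\neq 0\}$, so that $S_{2\ell}^{0,D}(1)=\Span\{f_i : i\in I^0\}$ and $S_{2\ell}^{-,D}(1)=\Span\{f_i : i\notin I^0\}$ by Definition~\ref{def:subspace}. Expanding in this basis, $\mathcal{F}_{D,k,e}=\sum_{i=1}^d \lambda_{e,i}\,f_i$ with $\lambda_{e,i}=\langle \mathcal{F}_{D,k,e},f_i\rangle/\langle f_i,f_i\rangle$, the whole proposition becomes a statement about the coefficient matrix $(\lambda_{e,i})$, where $e$ runs over $1\le e\le \lfloor\frac{\ell-4}{2}\rfloor$ (equivalently $k=\ell-2e\ge 4$). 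The crucial structural input is that the formula from Section~\ref{sec:rankin} expresses $\langle \mathcal{F}_{D,k,e},f_i\rangle$ as an explicit nonzero constant times $L(f_i,D,\ell)$ times a period-type quantity $R_e(f_i)$ of $f_i$ (a specific linear combination of the periods $r_t(f_i)$ of \eqref{eq:defofperiods}) whose coefficients depend on $e$.

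Granting this factorization, the inclusion $\Span\{\mathcal{F}_{D,k,e}\}\subseteq S_{2\ell}^{0,D}(1)$ is immediate: whenever $i\notin I^0$ we have $L(f_i,D,\ell)=0$, so the $L$-factor forces $\lambda_{e,i}=0$ for every $e$. Hence each $\mathcal{F}_{D,k,e}$ has zero component along $S_{2\ell}^{-,D}(1)$ and therefore lies in $S_{2\ell}^{0,D}(1)$; note that this step also re-confirms that $\mathcal{F}_{D,k,e}$ is well defined as a cusp form in the asserted subspace, consistent with \eqref{eq:F_D,k,e}.

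For the reverse inclusion I would argue by orthogonality inside $S_{2\ell}^{0,D}(1)$: it suffices to show that any $g=\sum_{i\in I^0}b_i f_i$ with $\langle g,\mathcal{F}_{D,k,e}\rangle=0$ for all admissible $e$ must vanish. Using the factorization, each such condition reads $\sum_{i\in I^0}\beta_i\,\overline{R_e(f_i)}=0$, where $\beta_i=\overline{b_i}\,\overline{L(f_i,D,\ell)}$ and $L(f_i,D,\ell)\neq 0$ for $i\in I^0$. Thus the proposition reduces to the full-rank assertion that the matrix $\big(R_e(f_i)\big)_{e,\,i\in I^0}$ has rank $|I^0|=\dim S_{2\ell}^{0,D}(1)$. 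Here the count of available forms is not the obstruction: there are $\lfloor\frac{\ell-4}{2}\rfloor$ rows, which for $\ell\ge 6$ comfortably exceeds $\dim S_{2\ell}(1)\ge |I^0|$, so there are more than enough $\mathcal{F}_{D,k,e}$ to span.

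The hard part will be exactly this full-rank statement, i.e.\ ruling out accidental linear dependence among the functionals $f\mapsto R_e(f)$, as $e$ varies, restricted to $S_{2\ell}^{0,D}(1)$. The plan is to exploit the Eichler--Shimura theory of periods, under which the period map is injective on $S_{2\ell}(1)$, together with the fact that for a Hecke eigenform the periods of a fixed parity are nonzero algebraic multiples of a single nonzero transcendental period; combined with an explicit (generalized Vandermonde) evaluation of the determinant governing the $e$-dependence of $R_e$, this should force maximal rank. Should a direct determinant evaluation prove unwieldy, a fallback is induction on $\ell$, peeling off the highest-weight contribution $e=\lfloor\frac{\ell-4}{2}\rfloor$ and comparing against the analogue in lower weight. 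This establishes $\supseteq$, and together with the first inclusion completes the proof; I would keep the restriction $\ell\ge 6$ throughout so that $S_{2\ell}(1)\neq 0$ and Proposition~\ref{prop:isomorphism} and Theorem~\ref{thm:lift} are available for the companion statements.
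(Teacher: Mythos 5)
Your overall skeleton matches the paper's: both arguments rest on the factorization $\langle \mathcal{F}_{D,k,e}, f\rangle = (\text{nonzero constant})\cdot L(f,2k+2e-1)\cdot L(f,D,\ell)$ from Proposition \ref{prop:FdkeInnerProduct}, both obtain the inclusion $\Span\{\mathcal{F}_{D,k,e}\}\subseteq S^{0,D}_{2\ell}(1)$ for free from the $L(f,D,\ell)$ factor, and both reduce the reverse inclusion to showing that the functionals $f\mapsto r_{2k+2e-2}(f)$ have trivial common kernel (note that your $R_e$ is a single period $r_{2\ell-2e-2}$ up to a nonzero constant, not a genuine linear combination of periods). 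Your reduction to the full-rank claim for $\bigl(R_e(f_i)\bigr)_{e,\,i\in I^0}$ is correct, and it would in fact suffice to prove triviality of the common kernel on all of $S_{2\ell}(1)$, since linear independence of all the columns implies linear independence of any subset of them.

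The gap is that this decisive step is only sketched, and the sketch as written would not go through. As $e$ ranges over $1\le e\le\lfloor\frac{\ell-4}{2}\rfloor$, the index $t=2k+2e-2=2\ell-2e-2$ only sweeps the even integers in the upper half-range $\ell+2\le t\le 2\ell-4$; Eichler--Shimura injectivity requires the vanishing of $r_t(F)$ for all even $2\le t\le 2\ell-4$, so one must propagate the vanishing from the upper half-range to the rest. The paper does exactly this in Lemma \ref{lem:periodvanishing}: the two-term relation \eqref{eq:eichlershimurafunctionaleq} handles the range $2\le t\le \ell-4$, and the three-term Eichler--Shimura relation \eqref{eq:eichlershimura2t} is then needed, with a case analysis on the parity of $\ell$, to kill the remaining middle periods ($r_\ell,r_{\ell-2}$, respectively $r_{\ell+1},r_{\ell-1},r_{\ell-3}$). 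Your proposed alternatives do not supply this: the matrix $(r_{t_e}(f_i))$ has no Vandermonde structure (after factoring Manin's transcendental period out of each column one is left with a matrix of algebraic numbers whose determinant could a priori vanish), and ``the period map is injective on $S_{2\ell}(1)$'' is not directly applicable because you control only half of the even periods. So you have correctly reduced the proposition to the right lemma, but the lemma itself --- which is where the real work of this proof lies --- is missing.
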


\begin{proof}

By Proposition \ref{prop:FdkeInnerProduct}, we know that if $g\in S_{2\ell}^{-, D}(1)$ then $g$ is orthogonal to the subspace of $S_{2\ell}^D(1)$ spanned by $\{\mathcal{F}_{D, k, e}\}_{2k+4e=2\ell}$. So it suffices to show that the orthogonal complement of the span of $\{\mathcal{F}_{D, k, e}\}_{2k+4e=2\ell}$ is contained in $S_{2\ell}^{-, D}(1)$.

We will show that any modular form $G=\sum_j c_jg_j$ which is a linear combination of normalized Hecke eigenforms in $g_j \in S_{2\ell}^{0, D}(1)$ such that $\langle G, \mathcal{F}_{D,k,e} \rangle =0$ for all  $\mathcal{F}_{D, k, e}$ must be zero.

Note that Proposition \ref{prop:FdkeInnerProduct} and \eqref{eq:defofperiods} imply that 
\begin{align}
     \langle \mathcal{F}_{D,k,e},g_j\rangle
    =\frac{1}{2}\frac{\Gamma(2k+4e-1)\Gamma(k+2e)}{(2e)!(4\pi)^{2k+4e-1}\Gamma(k)}\frac{L_{D}(1-k)}{L_D(k)}\frac{(-2\pi i)^{2k+2e-1}}{(2k+2e-2)!}L(g_j,D,k+2e)r_{2k+2e-2}(g_j).
\end{align}
Thus, the orthogonality condition $ \langle G,\mathcal{F}_{D,k,e}\rangle=0$ is equivalent to
\begin{align}
    \sum_{j}c_jL(g_j,D,k+2e)r_{2k+2e-2}(g_j)=0.\label{eq:orthogonality}
\end{align}
Following an idea from the proof of \cite[Theorem 1]{Kohnen2005}, we define another form in $S_{2\ell}(1)$ by
$$F=\sum_j c_j L(g_j,D,k+2e)g_j.$$ 
Hence \eqref{eq:orthogonality} implies that
\begin{align}
    r_{2k+2e-2}(F)=\sum_jc_jL(g_j,D,k+2e)r_{2k+2e-2}(g_j)=0.
\end{align}
As $1\leq e \leq \lfloor \frac{\ell-4}{2}\rfloor$ and $k+2e = \ell$, we have $\ell-2 \geq k \geq 4$. Then $t=2k+2e-2$ ranges through all even values $\ell+2\leq t\leq 2\ell-4$, so $r_{t}(F)=0$ for all even $\ell+2\leq t\leq 2\ell-4$.
As a result of the following lemma, we have $F=0$. Since $L(g_j,D,k+2e)\neq0$ as $g_j\in S_{2\ell}^{0,D}(1)$, we must have $c_j=0$ for all $j$, and thus $G=0$.

\end{proof}

\begin{lemma}\label{lem:periodvanishing}
    Let $F\in S_{2\ell}(1)$ and $\ell\geq 6$, and let $r_t(F)$ be the $t$-th Period of $F$. If  $r_t(F)=0$ for all even $t$ such that $\ell+2\leq t \leq 2\ell-4$, then $F=0$.
\end{lemma}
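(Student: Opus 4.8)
The plan is to exploit the classical theory of period polynomials for level-one cusp forms, which identifies a cusp form $F \in S_{2\ell}(1)$ (up to a well-understood kernel) with its period polynomial, a degree $2\ell-2$ polynomial whose coefficients are the periods $r_t(F)$. The key structural fact is the Eichler-Shimura isomorphism: the map sending $F$ to its period polynomial is injective on $S_{2\ell}(1)$, so it suffices to show that the hypothesis $r_t(F)=0$ for all even $t$ in the range $\ell+2 \le t \le 2\ell-4$ forces \emph{all} periods to vanish, whence the period polynomial is zero and $F=0$.

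First I would recall that the periods split into ``even'' periods $r_t(F)$ for $t$ even and ``odd'' periods for $t$ odd, and that the relevant invariant is really the collection of even periods (the odd periods are determined up to the Eichler-Shimura relations, and for the injectivity statement one works with a suitable half). Second, I would invoke the two families of linear relations satisfied by the periods of any cusp form: the relations coming from the generators $S = \begin{bsmallmatrix} 0 & -1 \\ 1 & 0 \end{bsmallmatrix}$ and $U = TS$ of $\mathrm{SL}_2(\ZZ)$ acting on period polynomials. The $S$-relation gives $r_t(F) = -r_{2\ell-2-t}(F)$ (a reflection symmetry), which immediately propagates the vanishing from the range $\ell+2 \le t \le 2\ell-4$ to the symmetric range $2 \le t \le \ell-4$, leaving only the few ``central'' periods near $t = \ell-2, \ell-1, \ell$ undetermined. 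Third, I would use the $U$-relation (the three-term relation $r + r|U + r|U^2 = 0$ on the period polynomial) to pin down these remaining central even periods in terms of the ones already shown to vanish, thereby concluding that every even period of $F$ is zero.

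The main obstacle I anticipate is the bookkeeping at the center of the critical strip: the $S$-relation alone leaves a genuinely small but nonempty set of central periods unconstrained, and one must verify that the three-term $U$-relation is strong enough to force these to vanish as well, rather than merely relating them to one another. This is precisely where the hypothesis $\ell \ge 6$ should enter, guaranteeing the range $\ell+2 \le t \le 2\ell-4$ is wide enough (nonempty and overlapping correctly under reflection) that the combined $S$- and $U$-relations leave no freedom. I would handle this by writing down the period polynomial $r_F(X) = \sum_{t=0}^{2\ell-2} \binom{2\ell-2}{t} r_t(F) X^{2\ell-2-t}$ explicitly, translating the vanishing hypotheses into vanishing of a block of its coefficients, and checking that the cocycle relations force the whole polynomial to zero.

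An alternative, perhaps cleaner, route would be to cite the known basis of $S_{2\ell}(1)$ in terms of Rankin-Cohen brackets or Eisenstein-type period generators: since the even periods $r_t(F)$ for $t$ in a suitable index set of size $\dim S_{2\ell}(1)$ already form a complete set of coordinates by Eichler-Shimura, one need only count that the range $\ell+2 \le t \le 2\ell-4$ of even integers contains at least $\dim S_{2\ell}(1)$ values and that these particular periods are linearly independent functionals on $S_{2\ell}(1)$. The dimension count $\dim S_{2\ell}(1) \approx \ell/6$ is comfortably smaller than the number of even $t$ in the given range for $\ell \ge 6$, so the real content is the nondegeneracy of these period functionals, which again follows from the Eichler-Shimura theory together with the absence of exceptional ``extra'' relations in level one. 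I would present whichever of these two formulations leads to the shortest rigorous argument, most likely the explicit period-polynomial coefficient analysis combined with the $S$- and $U$-cocycle relations.
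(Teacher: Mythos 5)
Your proposal follows essentially the same route as the paper: reduce via Eichler--Shimura injectivity to showing all even periods in $2\le t\le 2\ell-4$ vanish, use the reflection relation $r_t(F)+(-1)^t r_{2\ell-2-t}(F)=0$ to transfer the hypothesis to the range $2\le t\le \ell-4$, and then invoke the three-term ($U$-) relation to kill the remaining central even periods, splitting on the parity of $\ell$. The only part left implicit in your sketch --- explicitly checking that the three-term relation at $t=\ell-2$ (for $\ell$ even) or at $t=\ell-1,\ell+1$ (for $\ell$ odd), combined with the reflection relation, yields an invertible linear system in the central periods --- is exactly the computation the paper carries out, so your plan is sound and complete in outline.
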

\begin{proof}We follow the idea of \cite{XueRankin-Cohen}.
    By the Eichler-Shimura theory \cite[Proposition 2.3 (b)]{Manin1973} and \cite[Remark 2.4]{XueRankin-Cohen}, we know that $F=0$ if and only if $r_t(F)=0$ for all even $2\leq t\leq 2\ell-4$. 
    By the Eichler-Shimura relation \begin{align}\label{eq:eichlershimurafunctionaleq}
        r_t(F)+(-1)^{t}r_{2\ell-2-t}(F)=0,
    \end{align}
    and the assumption that $r_t(F)=0$ for all even $\ell+2\leq t \leq 2\ell-4$, we know that $r_t(F)=0$ also for all even $2\leq t\leq \ell-4$. To show that the Periods $\ell-4 < t < \ell+2$ are zero, we split into cases based on the parity of $\ell$.
    
    \begin{enumerate}
        \item If $\ell$ is even, it suffices to show that $r_\ell(F)=r_{\ell-2}(F)=0$.
    Since $\ell$ is even, by \eqref{eq:eichlershimurafunctionaleq} 
    \begin{align}\label{eq:eichellminustwo}
        r_\ell(F)+r_{\ell-2}(F)=0.
    \end{align}
    Substituting $t=\ell-2$ into the Eichler-Shimura relation 
    \begin{align}\label{eq:eichlershimura2t}
        (-1)^tr_t(F)+\sum_{\substack{0\leq m\leq t\\ m\equiv 0\pmod2}}\binom{t}{m}r_{2\ell-2-t+m}(F)+\sum_{\substack{0\leq m\leq 2\ell-2-t\\ m\equiv t\pmod2}}\binom{2\ell-2-t}{m}r_{m}(F)=0
    \end{align}
    and noting that $r_0(F)+r_{2\ell-2}(F)=0$,  we obtain 
    \begin{align}
    \left(\binom{\ell}{2}+1\right)r_{\ell-2}(F)+2r_{\ell}(F)&=0.
    \end{align}
    This equation, along with \eqref{eq:eichellminustwo} implies that $r_\ell(F)=r_{\ell-2}(F)=0$ for $\ell\ge6$.
    \item If $\ell$ is odd, it suffices to show that $r_{\ell-3}(F)=r_{\ell-1}(F)=r_{\ell+1}(F)=0$. Substituting $t = \ell -1$ into \eqref{eq:eichlershimura2t}, we get
    \[3r_{\ell-1}(F) + \binom{\ell-1}{2} r_{\ell+1}(F) + \binom{\ell-1}{\ell-3} r_{\ell-3}(F) = 0.\]
    Since $\binom{\ell-1}{2} = \binom{\ell-1}{\ell-3}$, and we know by \eqref{eq:eichlershimurafunctionaleq} that $ r_{\ell-3}(F)+r_{\ell+1}(F)=0$, we conclude that $r_{\ell-1}(F) = 0$. 
    Substituting $t=\ell+1$ into \eqref{eq:eichlershimura2t} yields
    \begin{align}
        2r_{\ell-3}(F)+\binom{\ell+1}{2}r_{\ell-1}(F)+\left(1+\binom{\ell+1}{4}\right)r_{\ell+1}(F)=0,
    \end{align}
    and noting that $r_{\ell-1}(F)=0$, we conclude by \eqref{eq:eichlershimurafunctionaleq} that
    \begin{align}
        r_{\ell-3}(F)=r_{\ell+1}(F)=0.
    \end{align}
    \end{enumerate}
    This finishes the proof. 
\end{proof}

Finally, we construct a spanning set for $S_{\ell+1/2}^{0,D}(4)$ and finish the proof of Theorem \ref{thm:gspan}. 
\begin{proposition}\label{prop:GDkespan}
    The set $\{\mathcal{G}_{D,k,e}\}_{k+2e=\ell}$ for $1\leq e \leq \lfloor \frac{\ell-4}{2}\rfloor$ spans the subspace $S_{\ell+1/2}^{0,D}(4)$, for all $\ell \geq 6$.  
\end{proposition}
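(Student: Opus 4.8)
The plan is to transport the spanning result for the integral-weight forms $\{\mathcal{F}_{D,k,e}\}$ (Proposition \ref{prop:fspan}) across the Shimura isomorphism of Proposition \ref{prop:isomorphism}, using the lift identity of Theorem \ref{thm:lift}. The index sets match exactly: both families run over $k+2e=\ell$ with $1\le e\le\lfloor\frac{\ell-4}{2}\rfloor$, and Theorem \ref{thm:lift} gives $\mathcal{S}_D(\mathcal{G}_{D,k,e})=c_{k,e}\mathcal{F}_{D,k,e}$ with the explicit scalar $c_{k,e}=|D|^e\binom{k+e-1}{e}/\binom{k+2e-1}{2e}$, which is nonzero. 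So $\{\mathcal{G}_{D,k,e}\}$ should be, up to these scalars, precisely the $\mathcal{S}_D$-preimage of the spanning set $\{\mathcal{F}_{D,k,e}\}$ of $S_{2\ell}^{0,D}(1)$, and the result should follow formally.

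The essential input---and the only step that is not pure formalism---is to verify that each $\mathcal{G}_{D,k,e}$ actually lies in the subspace $S_{\ell+1/2}^{0,D}(4)$, and not merely in $S_{\ell+1/2}^{+}(4)$; without this, $\Span\{\mathcal{G}_{D,k,e}\}$ need not be contained in the target. I would establish this in exact parallel with how the proof of Proposition \ref{prop:fspan} treats $\mathcal{F}_{D,k,e}$: by invoking the Petersson inner product formula for $\mathcal{G}_{D,k,e}$ derived by Rankin--Selberg unfolding in Section \ref{sec:rankin}, the half-integral weight companion of Proposition \ref{prop:FdkeInnerProduct}. That formula expresses $\langle \mathcal{G}_{D,k,e},g\rangle$, for a Hecke eigenform $g=\sum_{n\ge1}c_g(n)q^n\in S_{\ell+1/2}^{+}(4)$, as an explicit factor times $\overline{c_g(|D|)}$---the $|D|$-th coefficient being exactly the quantity singled out by the factor $\theta(|D|z)$ in the definition \eqref{eq:G_D,k,e}---and hence vanishes whenever $c_g(|D|)=0$. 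Thus $\mathcal{G}_{D,k,e}$ is orthogonal to every eigenform spanning $S_{\ell+1/2}^{-,D}(4)$, giving $\mathcal{G}_{D,k,e}\in S_{\ell+1/2}^{0,D}(4)$ and in particular $\Span\{\mathcal{G}_{D,k,e}\}\subseteq S_{\ell+1/2}^{0,D}(4)$.

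With this membership secured, the reverse containment is linear algebra. By Proposition \ref{prop:isomorphism} the restricted lift $\mathcal{S}_D\colon S_{\ell+1/2}^{0,D}(4)\to S_{2\ell}^{0,D}(1)$ is an isomorphism, and by Proposition \ref{prop:fspan} the forms $\{\mathcal{F}_{D,k,e}\}$ span the target. Given $h\in S_{\ell+1/2}^{0,D}(4)$, write $\mathcal{S}_D(h)=\sum_{k+2e=\ell}a_{k,e}\mathcal{F}_{D,k,e}$; by Theorem \ref{thm:lift} and $c_{k,e}\ne0$ this equals $\mathcal{S}_D\big(\sum_{k+2e=\ell}(a_{k,e}/c_{k,e})\mathcal{G}_{D,k,e}\big)$, where the inner sum again lies in $S_{\ell+1/2}^{0,D}(4)$ by the previous paragraph. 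Injectivity of $\mathcal{S}_D$ on $S_{\ell+1/2}^{0,D}(4)$ then forces $h=\sum_{k+2e=\ell}(a_{k,e}/c_{k,e})\mathcal{G}_{D,k,e}\in\Span\{\mathcal{G}_{D,k,e}\}$. Combined with the containment from the second paragraph this gives the claimed equality, valid for $\ell\ge6$ to match Propositions \ref{prop:isomorphism} and \ref{prop:fspan}, and completes the proof of Theorem \ref{thm:gspan}.
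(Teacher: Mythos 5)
Your proposal is correct and uses exactly the same ingredients as the paper: Proposition \ref{prop:RankinSelberghalfintegral} to get $\Span\{\mathcal{G}_{D,k,e}\}\subseteq S_{\ell+1/2}^{0,D}(4)$, then Theorem \ref{thm:lift} together with Propositions \ref{prop:isomorphism} and \ref{prop:fspan} for the reverse inclusion. The only (cosmetic) difference is in the final step, where the paper compares dimensions ($\dim\Span\{\mathcal{G}_{D,k,e}\}\ge\dim\Span\{\mathcal{F}_{D,k,e}\}=\dim S_{\ell+1/2}^{0,D}(4)$) while you exhibit an explicit preimage of each $h$ under the injective restricted lift; both are equivalent pieces of linear algebra resting on the same facts.
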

\begin{proof}
For a Hecke eigenform $g \in S_{\ell+1/2}^{-, D}(4)$, we have $\langle g,\mathcal{G}_{D,k,e}\rangle=0$  by Proposition \ref{prop:RankinSelberghalfintegral}. So  $g$ is orthogonal to $\Span\{\mathcal{G}_{D,k,e}\}_{k+2e=\ell}$ and thus
  \begin{align}\Span\{\mathcal{G}_{D,k,e}\}_{k+2e=\ell}\subseteq S_{\ell+1/2}^{0,D}(4).\label{eq:condition1}\end{align}
  Note that Theorem \ref{thm:lift} implies that 
  \begin{align}
      \dim \Span\{\mathcal{G}_{D,k,e}\}_{k+2e=\ell}\geq \dim\Span\{\mathcal{F}_{D,k,e}\}_{k+2e=\ell}.\label{eq:condition2}
  \end{align}
By Propositions  \ref{prop:isomorphism} and \ref{prop:fspan}, we have 
  \begin{align}
      \quad \Span\{\mathcal{F}_{D,k,e}\}_{k+2e=\ell}=S_{2\ell}^{0,D}(1) \quad{\rm and}\dim S_{2\ell}^{0,D}(1)=\dim S_{\ell+1/2}^{0,D}(4).\label{eq:condition3}
  \end{align}
    Now \eqref{eq:condition1}, \eqref{eq:condition2} and \eqref{eq:condition3} together imply that $\dim \Span\{\mathcal{G}_{D,k,e}\}_{k+2e=\ell} \geq  \dim S_{\ell+1/2}^{0,D}(4)$. So we conclude that $\Span\{\mathcal{G}_{D,k,e}\}_{k+2e=\ell} = S_{\ell+1/2}^{0,D}(4)$.
\end{proof}

Combining Propositions \ref{prop:isomorphism}, \ref{prop:fspan} and \ref{prop:GDkespan}, we complete the proof of Theorem \ref{thm:gspan}.

\section{Projection}\label{sec:projection}
In this section we prove an alternate formula for $\mathcal{G}_{D,k,e}$ \eqref{eq:G_D,k,e}:
\begin{equation} 
\mathcal{G}_{D, k, e}(z)=\Tr^{4D}_4 [G_{k,D}(4z), \theta(|D|z)]_e.
\end{equation}
A similar formula is implicit in equations (6) and (7) in \cite{Kohnen-Zagier1981}.  
This formula allows us to compute the Fourier coefficients (Proposition \ref{prop:FourierexofGDe}).

We need to introduce some notation and facts needed for the proof of Lemma \ref{lem:vswap}. Let
\begin{align}
    \mathbb{P}^1(\mathbb{Z}/N\mathbb{Z})=\{(a:b):a,b\in\mathbb{Z}/N\mathbb{Z},\,\gcd(a,b,N)=1\}/\sim
\end{align}
be the projective line over $\mathbb{Z}/N\mathbb{Z}$, where $(a:b)\sim(a':b')$ if there exists $u\in(\mathbb{Z}/N\mathbb{Z})^{\ast}$ such that $a=ua', b=ub'$. It is known that there is a bijection between $\Gamma_0(N)\backslash\Sl_2(\mathbb{Z})$ and $\mathbb{P}^1(\mathbb{Z}/N\mathbb{Z})$, which sends a coset representative $\begin{bsmallmatrix}
    a &b\\c&d
\end{bsmallmatrix}$ to the class $(c:d)$ in $\mathbb{P}^1(\ZZ/N\mathbb{Z})$, see \cite[Proposition 3.10]{Steinbook}. For future reference, we prove a result on coset representatives of certain quotients of congruence subgroups.

\begin{lemma}\label{lem:cosetreps}
    Let $N\in \mathbb{N}$ and let $S\in \mathbb{N}$ be squarefree with $(N,S)=1$. Then  \begin{equation*}
      \left\{  \begin{bmatrix}
            1 & 0 \\ NS_1 & 1
        \end{bmatrix}\begin{bmatrix}
            1 & \mu \\ 0 & 1
        \end{bmatrix}\quad :\quad S_1\mid S,\quad \mu~\text{mod}~S_2\right\}
    \end{equation*}
    (meaning $\mu$ ranges over all values mod $S_2$) is a set of coset representatives for $\Gamma_0(NS)\backslash \Gamma_0(N)$.
\end{lemma}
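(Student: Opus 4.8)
The plan is to prove the claim by a counting argument paired with a direct check that the listed matrices lie in pairwise distinct right cosets: once the number of listed matrices is shown to equal the index $[\Gamma_0(N):\Gamma_0(NS)]$, distinctness alone forces them to be a complete set of representatives. Throughout I write $S_2 = S/S_1$, as the statement intends.

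First I would record the index. With $\psi(M)=[\Sl_2(\mathbb{Z}):\Gamma_0(M)]=M\prod_{p\mid M}(1+p^{-1})$, multiplicativity of $\psi$ together with $(N,S)=1$ gives
\[
[\Gamma_0(N):\Gamma_0(NS)]=\frac{\psi(NS)}{\psi(N)}=\psi(S)=\prod_{p\mid S}(p+1),
\]
the last step using that $S$ is squarefree. On the other hand, the number of listed matrices is
\[
\sum_{S_1\mid S}S_2=\sum_{S_1\mid S}\frac{S}{S_1}=\sigma(S)=\prod_{p\mid S}(p+1),
\]
again because $S$ is squarefree. The two counts agree, so it remains only to establish distinctness.

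For distinctness I would use the coset criterion. For $\gamma_1,\gamma_2\in\Gamma_0(N)$ with bottom rows $(c_i,d_i)$ and $c_i=Nc_i'$, one has $\Gamma_0(NS)\gamma_1=\Gamma_0(NS)\gamma_2$ exactly when the lower-left entry $c_1d_2-d_1c_2=N(c_1'd_2-d_1c_2')$ of $\gamma_1\gamma_2^{-1}$ is divisible by $NS$, i.e. when $c_1'd_2\equiv c_2'd_1\pmod{S}$. The representative attached to $(S_1,\mu)$ is $\begin{bsmallmatrix}1&\mu\\NS_1&NS_1\mu+1\end{bsmallmatrix}$, so there $c'=S_1$ and $d=NS_1\mu+1$. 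Feeding two pairs $(S_1,\mu)$ and $(S_1',\mu')$ into the criterion, the congruence becomes
\[
NS_1S_1'(\mu'-\mu)+(S_1-S_1')\equiv 0 \pmod{S}.
\]
The substance of the proof is to analyze this prime by prime: since $S$ is squarefree, $\mathbb{Z}/S\mathbb{Z}\cong\prod_{p\mid S}\mathbb{F}_p$, and reducing modulo a prime $p\mid S$ (where $N$ is a unit because $(N,S)=1$) shows that if $p$ divides exactly one of $S_1,S_1'$ the left-hand side reduces to $\pm S_1'$ or $\pm S_1$, a nonzero unit, which is a contradiction. Hence $S_1$ and $S_1'$ have the same prime divisors, and being squarefree divisors of $S$ they coincide, $S_1=S_1'$. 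With $S_1=S_1'$ the congruence collapses to $NS_1^2(\mu'-\mu)\equiv0\pmod{S}$; this imposes nothing at primes $p\mid S_1$ and forces $p\mid\mu'-\mu$ at each $p\mid S_2$, so it is equivalent to $\mu\equiv\mu'\pmod{S_2}$. Therefore distinct pairs give distinct cosets, which together with the count finishes the proof.

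I expect the only delicate point to be the prime-by-prime bookkeeping in the last step: correctly separating the contributions of primes dividing $S_1$ from those dividing $S_2$, and recovering first the divisor $S_1$ and only then the residue $\mu\bmod S_2$. The hypotheses $(N,S)=1$ and squarefreeness of $S$ are precisely what make each local computation reduce to a statement over the field $\mathbb{F}_p$, so that the index and the size of the listed set both equal $\prod_{p\mid S}(p+1)$ and the recovery of $(S_1,\mu)$ goes through cleanly.
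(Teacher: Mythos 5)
Your proof is correct, but it takes a genuinely different route from the paper's. The paper invokes the classification of the cosets $\Gamma_0(NS)\backslash\Gamma_0(N)$ from Gross--Zagier (p.~276), namely that a coset with bottom row $(c,d)$ is determined by $S_1=(c,S)$ together with $c^\ast d \bmod S/S_1$, and then simply checks that the listed matrices realize every value of these two invariants. You instead give a self-contained argument: the index count
\begin{equation*}
[\Gamma_0(N):\Gamma_0(NS)]=\psi(S)=\prod_{p\mid S}(p+1)=\sigma(S)=\sum_{S_1\mid S}\frac{S}{S_1}
\end{equation*}
matches the size of the listed set, and the explicit coset criterion $S\mid c_1'd_2-c_2'd_1$ applied to the representatives reduces to $NS_1S_1'(\mu'-\mu)+(S_1-S_1')\equiv 0\pmod S$, which your prime-by-prime analysis (valid since $S$ is squarefree and $(N,S)=1$) shows forces $S_1=S_1'$ and then $\mu\equiv\mu'\pmod{S_2}$; all the local computations check out. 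What each approach buys: the paper's proof is shorter but leans on an external classification that itself requires justification, while yours is elementary and verifiable from scratch at the modest cost of quoting the standard index formula for $\Gamma_0(M)$ in $\Sl_2(\mathbb{Z})$ and its multiplicativity. Both establish exactly the statement as claimed.
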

\begin{proof}
    We use the description of the cosets $\Gamma_0(NS)\backslash \Gamma_0(N)$ given in (\cite{GZ-86}, p. 276), which says that a coset $\Gamma_0(NS)\begin{bsmallmatrix}
        a & b \\ c & d
    \end{bsmallmatrix}$ is determined by the value of $S_1=(c,S)$ and the value of $c^*d$ mod $S/S_1$. Given this, it suffices to show that
    \begin{equation*}
        \begin{bmatrix}
            a & b \\ c & d
        \end{bmatrix}\coloneqq\begin{bmatrix}
            1 & \mu \\ NS_1 & \mu N S_1+1
        \end{bmatrix}=\begin{bmatrix}
            1 & 0 \\ NS_1 & 1
        \end{bmatrix}\begin{bmatrix}
            1 & \mu \\ 0 & 1
        \end{bmatrix}
    \end{equation*}
    ranges over all possible values of $\text{gcd}(c,S)$ and all possible values of $c^*d\mod S_2$, where $c^*$ is an integer representative of $c^{-1}~\text{mod}~S_2$. It clearly ranges over all values of $\text{gcd}(c,S)$, since $S_1$ is an arbitrary divisor of $S$ and $(N,S)=1$. Moreover, the values of $c^*d\equiv (NS_1)^*(\mu N S_1)+1\equiv \mu+1~\text{mod}~S_2$ range over all values mod $S_2$ because $\mu$ is arbitrary.
\end{proof}
\begin{lemma}\label{lem:vswap}
   Let $\ell\ge1$ be an integer and $D$ be odd. We have $V\Tr^{4D}_4g=\Tr^{16D}_{16}Vg$ for all $g\in M_{\ell+1/2}(4|D|)$.
\end{lemma}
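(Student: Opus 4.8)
The plan is to reduce the operator identity to a matching of two sums of slashes over coset representatives, and then produce the matching by conjugating the inner congruence subgroups by $\alpha := \begin{bsmallmatrix} 4 & 1 \\ 0 & 4\end{bsmallmatrix}$ (so that $V = |_{\ell+1/2}\alpha$). First I would expand both sides using the definition of $\Tr$ and the cocycle property of the slash operator to write
\[
V\Tr^{4D}_4 g = \sum_{\gamma} g|(\gamma\alpha), \qquad \Tr^{16D}_{16}Vg = \sum_\delta g|(\alpha\delta),
\]
where $\gamma$ runs over representatives of $\Gamma_0(4D)\backslash\Gamma_0(4)$ and $\delta$ over representatives of $\Gamma_0(16D)\backslash\Gamma_0(16)$. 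The key observation is that for $\delta=\begin{bsmallmatrix} a&b\\c&d\end{bsmallmatrix}\in\Gamma_0(16)$ the conjugate $\gamma_\delta := \alpha\delta\alpha^{-1}$ is again integral and lies in $\Gamma_0(16)\subseteq\Gamma_0(4)$: conjugation by $\alpha$ fixes the lower-left entry $c$ and introduces only the integrality condition $4\mid a-d$, which follows from $ad\equiv 1\pmod 4$. Since $\gamma_\delta\alpha = \alpha\delta$ as matrices, we have $g|(\alpha\delta)=g|(\gamma_\delta\alpha)$, so it remains to show that $\delta\mapsto\gamma_\delta$ induces a bijection $\Gamma_0(16D)\backslash\Gamma_0(16)\to\Gamma_0(4D)\backslash\Gamma_0(4)$.

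This bijection rests on three facts, each using that $D$ is odd so that $\lcm(16,4|D|)=16|D|$. First, conjugation by $\alpha$ preserves the lower-left entry, so $\alpha\,\Gamma_0(16D)\,\alpha^{-1}\subseteq\Gamma_0(16D)\subseteq\Gamma_0(4D)$; this makes $\delta\mapsto\Gamma_0(4D)\gamma_\delta$ well defined on cosets. Second, if $M\in\alpha^{-1}\Gamma_0(4D)\alpha\cap\Gamma_0(16)$, then its lower-left entry is divisible by both $4|D|$ and $16$, hence by $16|D|$, giving $\alpha^{-1}\Gamma_0(4D)\alpha\cap\Gamma_0(16)=\Gamma_0(16D)$; this yields injectivity of the induced map. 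Third, since $\gcd(|D|,4)=1$, multiplicativity of the Dedekind $\psi$-function shows that the two indices $[\Gamma_0(4):\Gamma_0(4D)]$ and $[\Gamma_0(16):\Gamma_0(16D)]$ both equal $|D|\prod_{p\mid |D|}(1+p^{-1})$, so the injection is forced to be a bijection.

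Combining these, as $\delta$ runs over representatives of $\Gamma_0(16D)\backslash\Gamma_0(16)$ the elements $\gamma_\delta$ run over representatives of $\Gamma_0(4D)\backslash\Gamma_0(4)$. Because $g$ is invariant under $\Gamma_0(4D)$, the term $g|(\gamma\alpha)=(g|\gamma)|\alpha$ depends only on the coset $\Gamma_0(4D)\gamma$, so the two sums agree term by term after reindexing, proving $V\Tr^{4D}_4 g = \Tr^{16D}_{16}Vg$. I would note that Lemma \ref{lem:cosetreps} (applied with $N=4,16$ and $S=|D|$) can be used to make the representatives $\gamma_\delta$ completely explicit if a computational check is desired, but the abstract conjugation argument avoids that bookkeeping.

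The hard part will not be the group theory but the half-integral-weight automorphy factors. The trace maps are assembled from the weight-$(\ell+1/2)$ slash of elements of $\Sl_2(\BZ)$, which carries a theta-multiplier, whereas $V$ is the slash of the upper-triangular $\alpha$. One must verify that passing from $g|(\alpha\delta)$ to $g|(\gamma_\delta\alpha)$, and the invariance $g|(\alpha\epsilon\alpha^{-1})=g$ for $\epsilon\in\Gamma_0(16D)$, are compatible with these multipliers — equivalently, that the automorphy factor of $\gamma_\delta$ matches that of $\delta$ together with the contribution of $\alpha$. Since $\alpha$ is upper triangular with positive determinant its automorphy factor is an unambiguous power of $4$, and conjugation preserves the congruence data governing the theta-multiplier; this is precisely the bookkeeping underlying the operators $V$ and $\pr^+$ in Kohnen--Zagier \cite{Kohnen-Zagier1981}, so the consistency holds, but it is the point that requires the most care.
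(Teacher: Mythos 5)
Your argument is correct, and the group-theoretic core is organized genuinely differently from the paper's. The paper takes the explicit representatives $\gamma'_{D_1,\mu}$ of $\Gamma_0(16D)\backslash\Gamma_0(16)$ from Lemma \ref{lem:cosetreps}, computes their conjugates by $\begin{bsmallmatrix}1&1/4\\0&1\end{bsmallmatrix}$ entry by entry, and verifies that these conjugates form a complete system of representatives of $\Gamma_0(4D)\backslash\Gamma_0(4)$ by identifying cosets with points of $\mathbb{P}^1(\ZZ/4|D|\ZZ)$ and checking, for each fixed divisor $D_1$ of $D$, that the $|D_2|$ conjugates injectively hit the $|D_2|$ classes $(c:d)$ with $(c,4D)=4|D_1|$. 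You avoid choosing representatives altogether: conjugation by $\alpha$ preserves the lower-left entry and preserves integrality on $\Gamma_0(16)$ (your check that $4\mid a-d$ follows from $ad\equiv 1\pmod{16}$ is the one small computation needed), which gives $\alpha\Gamma_0(16D)\alpha^{-1}\subseteq\Gamma_0(4D)$ and $\alpha^{-1}\Gamma_0(4D)\alpha\cap\Gamma_0(16)=\Gamma_0(16D)$, hence a well-defined injection of coset spaces, forced to be a bijection because both indices equal $\psi(|D|)=|D|\prod_{p\mid|D|}(1+p^{-1})$. This is cleaner and makes the role of $\lcm(16,4|D|)=16|D|$ --- and thus of the hypothesis that $D$ is odd --- more transparent, whereas the paper's route is more concrete and reuses machinery ($\mathbb{P}^1(\ZZ/N\ZZ)$ and Lemma \ref{lem:cosetreps}) it needs elsewhere. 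Both proofs reduce the lemma to the same coset-matching statement and both lean on the same implicit compatibility of the weight-$(\ell+1/2)$ automorphy factors with the factorization $\alpha\delta=\gamma_\delta\alpha$ and with $\Gamma_0(4D)$-invariance of $g$; you are right to flag this as the delicate point (the paper passes over it silently), though your final paragraph asserts rather than verifies the required multiplier identity, which is the same level of detail the paper itself supplies.
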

\begin{proof}
    We first remark that by direct calculation, $Vg\in M_{\ell + \frac{1}{2}}(16|D|)$, so $\Tr^{16D}_{16}Vg$ is well-defined. Note that applying the fixed set of cosets for $\Gamma_0(4D)\backslash \Gamma_0(4)$ and $\Gamma_0(16D)\backslash \Gamma_0(16)$ given by Lemma \ref{lem:cosetreps} to $N=4,16$ and $S=|D|$, we have the following explicit formulas (see \eqref{eq:slashoperator} for the definition of slash operators) 
    \begin{align}
        V\Tr^{4D}_4g(z)&=\sum_{D_1D_2=D}\sum_{\mu~\text{mod}~|D_2|} g(z)|_\ell \gamma_{D_1,\mu}\begin{bsmallmatrix}
            1 & \frac{1}{4} \\ 0 & 1
        \end{bsmallmatrix},
        \\
        \Tr^{16D}_{16}Vg(z)&=\sum_{D_1D_2=D}\sum_{\mu~\text{mod}~|D_2|} g(z)|_\ell \begin{bsmallmatrix}
            1 & \frac{1}{4} \\ 0 & 1
        \end{bsmallmatrix}\gamma'_{D_1,\mu},
    \end{align}
    where
    \begin{equation*}
       \gamma_{D_1,\mu}=\begin{bmatrix}
            1 & 0 \\ 4|D_1| & 1
        \end{bmatrix}\begin{bmatrix}
            1 & \mu \\ 0 & 1
        \end{bmatrix}\quad{\rm and}\quad\gamma'_{D_1,\mu}=\begin{bmatrix}
            1 & 0 \\ 16|D_1| & 1
        \end{bmatrix}\begin{bmatrix}
            1 & \mu \\ 0 & 1
        \end{bmatrix}.
    \end{equation*}
    And the outer sums are over all factorizations of $D$ into a product of fundamental discriminants $D_1, D_2$. Therefore, to prove the desired equality it suffices to show that the set of cosets
    \begin{equation*}
        \left\{\Gamma_0(4|D|)\begin{bmatrix}
            1 & 1/4 \\ 0 & 1
        \end{bmatrix}\gamma'_{D_1,\mu}\begin{bmatrix}
            1 &1/4\\0 &1 \end{bmatrix}^{-1}:\,D_1D_2=D,\quad\mu~\text{mod}~{|D_2|}\right\}
    \end{equation*}
    is a system of representatives of $\Gamma_0(4|D|)\backslash \Gamma_0(4)$.

    Computing this conjugation, we have
    \begin{equation*}
        \begin{bmatrix}
            1 & 1/4 \\ 0 & 1
        \end{bmatrix}\gamma'_{D_1,\mu}\begin{bmatrix}
            1 & -1/4 \\ 0 & 1
        \end{bmatrix}=\begin{bmatrix}
1+4 {|D_{1}|} & -{|D_1|}+\left(1+4 {|D_1|}\right) \mu  
\\
 16 {|D_1|} & -4 {|D_1|}+16 {|D_{1}|} \mu +1 
\end{bmatrix} 
    .\end{equation*}
Using the bijection between $\Gamma_0(4|D|)\backslash \Gamma_0(4)$ and $\{(c:d)\in\mathbb{P}^1(\ZZ/4|D|\ZZ):4\mid c\}$, and fixing $|D_1|$, it suffices to show
    \begin{align}
        &\{(16|D_1|\,:\, 16|D_1|\mu-4|D_1|+1)\in \BP^1(\ZZ/4D\ZZ):\mu\text{ mod}|D_2|\},
        \\&=\{(c:d)\in \BP^1(\ZZ/4|D|\ZZ)\,:\,(c,4D)=4|D_1|\}.
    \end{align}
    Noting that the second set has size $|D_2|$ and that there are $|D_2|$ choices for $\mu$ in the first set, it suffices to show that no two choices of $\mu$ yield the same element of $\BP^1(\ZZ/4D\ZZ)$. To that end, suppose
    \begin{equation*}
        \alpha(16|D_1|:16|D_1|\mu-4|D_1|+1)=(16|D_1|:16|D_1|\mu'-4|D_1|+1)
    \end{equation*}
    for some $\alpha\in(\mathbb{Z}/4|D|\ZZ)^{\ast}$, which gives a system of congruences:
    \begin{equation*}
        \begin{cases}
            \alpha16|D_1|\equiv 16|D_1| \mod 4|D|, \\
            \alpha(16|D_1|\mu-4|D_1|+1)\equiv 16|D_1|\mu'-4|D_1|+1 \mod 4|D|.
        \end{cases}
    \end{equation*}
    The first equation gives $(\alpha-1)(16|D_1|)\equiv 0 \mod 4|D|$, and so $\alpha\equiv 1 \mod |D_2|$. Using this fact and taking the second equation mod $|D_2|$, we then get
    \begin{equation*}
        16|D_1|\mu-4|D_1|+1\equiv 16|D_1|\mu'-4|D_1|+1 \mod D_2.
    \end{equation*}
    Since $16|D_1|$ is invertible mod $|D_2|$, we obtain $\mu\equiv \mu'\mod |D_2|$, and the result follows.
\end{proof}
\begin{definition}
For $m\in\mathbb{N}$ and $f(z)=\sum_{n\geq0}a_f(n)q^n\in S_{k}(N,\chi)$ we define $U_mf$ by
\begin{align}
    U_mf(z)=\frac{1}{m}\sum_{v~{\rm mod}~m}f\left(\frac{z+v}{m}\right)=\sum_{n\geq0}a_f(mn)q^n.\label{eq:defofUmap}
\end{align}
\end{definition}
\noindent Equivalently, we may write \eqref{eq:slashoperator}
\begin{align}U_mf(z)=m^{k/2-1}\sum_{v\text{ mod }m}f(z)\bigg|_k\begin{bmatrix}
    1 & v\\ 0& m
\end{bmatrix}.\label{eq:defofUslash}\end{align} 
We need the following two simple observations.
\begin{lemma}\label{lem:U2ofGkd}
  Let $U_2$ be the operator defined in \eqref{eq:defofUmap}. Then 
    \begin{align}
        U_2G_{k,D}(z)=\left(1+2^{k-1}\legendre{D}{2}\right)G_{k,D}(z)-2^{k-1}\legendre{D}{2}G_{k,D}(2z).\label{eq:U2formular}
    \end{align}
\end{lemma}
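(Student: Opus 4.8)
The plan is to verify the identity \eqref{eq:U2formular} directly at the level of Fourier coefficients, since both sides are explicitly given $q$-expansions. Recall from \eqref{eq:G_k,D} that
\[
G_{k,D}(z)=\frac{L_D(1-k)}{2}+\sum_{n\ge1}\sigma_{k-1,D}(n)q^n,\qquad \sigma_{k-1,D}(n):=\sum_{d\mid n}\genlegendre{}{}{D}{d}d^{k-1}.
\]
By the definition of the operator in \eqref{eq:defofUmap}, $U_2 G_{k,D}$ has $n$-th Fourier coefficient equal to $\sigma_{k-1,D}(2n)$ (and constant term unchanged). So the whole lemma reduces to proving the arithmetic identity
\[
\sigma_{k-1,D}(2n)=\Big(1+2^{k-1}\genlegendre{}{}{D}{2}\Big)\sigma_{k-1,D}(n)-2^{k-1}\genlegendre{}{}{D}{2}\,\sigma_{k-1,D}(n/2),
\]
for all $n\ge1$, where the last term is understood to vanish when $2\nmid n$, together with the matching of constant terms.

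First I would check the constant terms: the right-hand side of \eqref{eq:U2formular} has constant term $\big(1+2^{k-1}\genlegendre{}{}{D}{2}\big)\tfrac{L_D(1-k)}{2}-2^{k-1}\genlegendre{}{}{D}{2}\tfrac{L_D(1-k)}{2}=\tfrac{L_D(1-k)}{2}$, which agrees with the constant term of $U_2 G_{k,D}$. For the positive coefficients, the natural approach is to separate the divisors of $2n$ according to their parity. Writing $n=2^{a}m$ with $m$ odd and $a\ge0$, the multiplicativity of $\sigma_{k-1,D}$ (which follows from the complete multiplicativity of the Kronecker symbol $\genlegendre{}{}{D}{\cdot}$) lets me reduce to comparing the local factors at the prime $2$. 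Concretely, with $\lambda:=\genlegendre{}{}{D}{2}$, the $2$-local factor of $\sigma_{k-1,D}$ at $2^{a}$ is $\sum_{j=0}^{a}(\lambda\,2^{k-1})^{j}$, and the claimed identity becomes the elementary recursion
\[
\sum_{j=0}^{a+1}(\lambda 2^{k-1})^{j}=\big(1+\lambda 2^{k-1}\big)\sum_{j=0}^{a}(\lambda 2^{k-1})^{j}-\lambda 2^{k-1}\sum_{j=0}^{a-1}(\lambda 2^{k-1})^{j},
\]
which is immediate upon expanding and telescoping (the case $a=0$, i.e.\ $n$ odd, being the boundary case where the final sum is empty).

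I do not anticipate any genuine obstacle here; this is a routine coefficient comparison, and the only thing to be careful about is the bookkeeping at the prime $2$ and the convention that $\sigma_{k-1,D}(n/2)=0$ when $n$ is odd. An alternative, slightly slicker route is to package $\sigma_{k-1,D}$ through the identity $\sum_{n\ge1}\sigma_{k-1,D}(n)n^{-s}=\zeta(s-k+1)L_D(s)$ and observe that applying $U_2$ corresponds to the operation on Dirichlet series that isolates even-index terms; the factor $1+2^{k-1}\lambda$ and the correction $-2^{k-1}\lambda$ at $2z$ then emerge from the Euler factor $\big(1-\lambda 2^{k-1-s}\big)^{-1}$ at the prime $2$. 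I would present the elementary divisor-sum computation as the main argument, since it is self-contained and requires no generating-function machinery.
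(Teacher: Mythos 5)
Your proposal is correct and takes essentially the same approach as the paper: both sides are compared coefficient-by-coefficient, reducing to an elementary identity among the divisor sums $\sum_{d\mid n}\legendre{D}{d}d^{k-1}$. The paper splits the divisors of $2n$ by parity directly rather than invoking multiplicativity and the $2$-local factor, but this is only a cosmetic reorganization of the same computation.
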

\begin{proof}
Recall that $G_{k,D}$ is defined by \eqref{eq:G_k,D}:
 \begin{align}
 G_{k,D}(z)=&\frac{L_{D}(1-k)}{2}+\sum_{n=1}^{\infty}\left(\sum_{d\mid n}\legendre{D}{d}d^{k-1}\right)q^n.
 \end{align}
Hence for $n\ge1$ the $n$-th Fourier coefficient of $U_2G_{k,D}(z)$ is
\begin{align}
    \sum_{d\mid 2n}\legendre{D}{d}d^{k-1}.
\end{align}
On the other hand, the $n$-th Fourier coefficient of right hand side of \eqref{eq:U2formular} is (using the convention that $d\mid(n/2)$ is an empty sum when $2\nmid n$)
    \begin{align}
       &\sum_{d\mid n}\legendre{D}{d}d^{k-1}+\sum_{d\mid n}\legendre{D}{2d}(2d)^{k-1}-\sum_{d\mid\frac{n}{2}}\legendre{D}{2d}(2d)^{k-1}\\
       =&\sum_{\substack{d\mid n\\d~{\rm even}}}\legendre{D}{d}d^{k-1}+\sum_{\substack{d\mid n\\d~{\rm odd}}}\legendre{D}{d}d^{k-1}+\sum_{\substack{m\mid 2n\\m~{\rm even}}}\legendre{D}{m}m^{k-1}-\sum_{\substack{\ell\mid n\\ \ell~{\rm even}}}\legendre{D}{\ell}\ell^{k-1}\\=&\sum_{\substack{d\mid n\\d~{\rm even}}}\legendre{D}{d}d^{k-1}+\sum_{\substack{d\mid 2n\\d~{\rm odd}}}\legendre{D}{d}d^{k-1}+\sum_{\substack{m\mid 2n\\m~{\rm even}}}\legendre{D}{m}m^{k-1}-\sum_{\substack{\ell\mid n\\ \ell~{\rm even}}}\legendre{D}{\ell}\ell^{k-1}
       \\=&\sum_{m\mid 2n}\legendre{D}{m}m^{k-1},
    \end{align}
which gives the result.
\end{proof}
\begin{lemma}\label{lem:gsimplified}The following identity holds:
\[G_{k,D}(4z) -G_{k,D}(8z)-2^{-k}\left(\frac{D}{2}\right) \left(G_{k,D}\left(2z+\frac{1}{2}\right) + G_{k,D}(2z)\right)=-\left(\frac{D}{2}\right)2^{-k+1}G_{k,D}(4z).\]
\end{lemma}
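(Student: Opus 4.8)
The plan is to reduce the whole identity to the $U_2$-operator identity of Lemma \ref{lem:U2ofGkd}, by first recognizing the bracketed combination $G_{k,D}(2z+\tfrac12)+G_{k,D}(2z)$ as a rescaling of $U_2G_{k,D}$. Set $a(n)=\sum_{d\mid n}\legendre{D}{d}d^{k-1}$, so that $G_{k,D}(z)=\tfrac{L_D(1-k)}{2}+\sum_{n\ge1}a(n)q^n$ by \eqref{eq:G_k,D}. First I would compute the effect of the half-integer shift: replacing $z$ by $2z+\tfrac12$ multiplies the $n$-th Fourier coefficient by $e^{\pi i n}=(-1)^n$, so in $G_{k,D}(2z+\tfrac12)+G_{k,D}(2z)$ the odd-index terms cancel and the even-index terms double. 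This yields a series supported on $q^{4m}$, namely $L_D(1-k)+2\sum_{m\ge1}a(2m)q^{4m}$. Comparing with the definition \eqref{eq:defofUmap} of $U_2$, one sees directly that
\[
G_{k,D}\!\left(2z+\tfrac12\right)+G_{k,D}(2z)=2\,(U_2G_{k,D})(4z).
\]

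Next I would invoke Lemma \ref{lem:U2ofGkd}. Replacing $z$ by $4z$ in \eqref{eq:U2formular} sends $G_{k,D}(z)\mapsto G_{k,D}(4z)$ and $G_{k,D}(2z)\mapsto G_{k,D}(8z)$, giving
\[
(U_2G_{k,D})(4z)=\left(1+2^{k-1}\legendre{D}{2}\right)G_{k,D}(4z)-2^{k-1}\legendre{D}{2}G_{k,D}(8z).
\]
Substituting the previous two displays into the left-hand side of the lemma and collecting the coefficients of $G_{k,D}(4z)$ and $G_{k,D}(8z)$ separately reduces the claim to a purely formal identity in powers of $2$ and $\legendre{D}{2}$. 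The decisive simplification is that $D$ is odd, so $\legendre{D}{2}=\pm1$ and $\legendre{D}{2}^2=1$; this makes the $G_{k,D}(8z)$-coefficient collapse to $0$ and the $G_{k,D}(4z)$-coefficient collapse to $-2^{-k+1}\legendre{D}{2}$, which is exactly the right-hand side.

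The computation is short, and there is no serious obstacle beyond bookkeeping. The one point requiring care is the half-integer argument $2z+\tfrac12$: one must track the factors $(-1)^n$ correctly and observe that the surviving series is supported on $q^{4m}$, so that it genuinely matches $(U_2G_{k,D})(4z)$ and Lemma \ref{lem:U2ofGkd} can be applied. As a self-contained fallback that avoids Lemma \ref{lem:U2ofGkd}, I could instead compare the coefficient of $q^{4m}$ on both sides directly; this reduces to the divisor-sum identity
\[
a(m)-a(m/2)=2^{1-k}\legendre{D}{2}\bigl(a(2m)-a(m)\bigr)\qquad(m\ge1),
\]
with the convention $a(m/2)=0$ when $m$ is odd. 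This last identity follows from the multiplicativity of $a$ together with the geometric-series evaluation of its $2$-part (writing $m=2^{j}m'$ with $m'$ odd and $\beta=\legendre{D}{2}2^{k-1}$, so that $a(2^{j})=\sum_{i=0}^{j}\beta^{i}$), and it again collapses upon using $\legendre{D}{2}^2=1$.
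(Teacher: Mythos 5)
Your proof is correct and follows essentially the same route as the paper: both arguments identify $G_{k,D}\!\left(2z+\tfrac12\right)+G_{k,D}(2z)$ with $2\,(U_2G_{k,D})(4z)$ (the paper reads this off directly from the averaging definition \eqref{eq:defofUmap} of $U_2$ and then substitutes $z\mapsto 4z$, while you verify the same identity by tracking the $(-1)^n$ factors in the Fourier expansion), and then both substitute Lemma \ref{lem:U2ofGkd} and collect the coefficients of $G_{k,D}(4z)$ and $G_{k,D}(8z)$, using $\legendre{D}{2}^2=1$. Your coefficient bookkeeping is accurate, so nothing further is needed.
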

\begin{proof}
From Lemma \ref{lem:U2ofGkd}, we know that
\[        \frac{1}{2}\left(G_{k,D}\left(\frac{z}{2}\right) + G_{k,D}\left(\frac{z+1}{2}\right) \right)=\left(1+2^{k-1}\legendre{D}{2}\right)G_{k,D}(z)-2^{k-1}\legendre{D}{2}G_{k,D}(2z).\]
Performing a change of variable with $z \mapsto 4z$, we get 
\[        \frac{1}{2}\left(G_{k,D}\left(2z\right) + G_{k,D}\left(2z+\frac{1}{2}\right) \right)=\left(1+2^{k-1}\legendre{D}{2}\right)G_{k,D}(4z)-2^{k-1}\legendre{D}{2}G_{k,D}(8z),\]
which implies that
\begin{align}
&\hspace{15pt}G_{k,D}(4z) -G_{k,D}(8z)-2^{-k}\left(\frac{D}{2}\right) \left(G_{k,D}\left(2z+\frac{1}{2}\right) + G_{k,D}(2z)\right)\\
&=G_{k,D}(4z) -G_{k,D}(8z)-2^{-k+1}\left(\frac{D}{2}\right) \left(\left(1+2^{k-1}\legendre{D}{2}\right)G_{k,D}(4z)-2^{k-1}\legendre{D}{2}G_{k,D}(8z)\right)\\
&= G_{k,D}(4z)-G_{k,D}(8z)-2^{-k+1}\legendre{D}{2}G_{k,D}(4z)-G_{k,D}(4z)-G_{k,D}(8z)\\
&=-\legendre{D}{2}2^{-k+1}G_{k,D}(4z),
\end{align}
as desired. 
\end{proof}Note that $\gamma_v=\begin{bsmallmatrix}1 & 0\\4|D|v & 1\end{bsmallmatrix}$ for $v=0,1,2,3$ form a system of representatives of $\Gamma_0(16|D|)\backslash\Gamma_0(4|D|)$ \cite[p.~195]{Kohnen-Zagier1981}. The following Lemma explicitly computes each term in $\Tr^{16D}_{4D}(VG_{k,D}(2z))$. 
\begin{lemma}\label{lem:gvs}
For $\gamma_v=\begin{bsmallmatrix}1 & 0\\4|D|v & 1\end{bsmallmatrix}$, we have
\begin{align}
V(G_{k,D}(2z))\Big|_{k}\gamma_v&=\begin{cases}
 G_{k,D}\left(2z+\frac{1}{2}\right) & v\equiv0,2\pmod4 ,\\ \legendre{D}{2}2^{k}G_{k,D}(8z) &   v\equiv1,3\pmod4.
\end{cases}
\end{align}

\end{lemma}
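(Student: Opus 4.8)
The plan is to rewrite the left-hand side entirely in terms of the weight-$k$ slash operator and then exploit the transformation law of $G_{k,D}$ under $\Gamma_0(|D|)$. First I would observe that $V(G_{k,D}(2z))=G_{k,D}(2z+\tfrac12)$, which in slash notation reads
\[
V(G_{k,D}(2z))=2^{-k/2}\,G_{k,D}\big|_k\begin{bsmallmatrix}2 & 1/2\\ 0 & 1\end{bsmallmatrix}.
\]
Applying $\big|_k\gamma_v$ and multiplying out the two matrices gives
\[
V(G_{k,D}(2z))\big|_k\gamma_v=2^{-k/2}\,G_{k,D}\big|_k M_v,\qquad M_v:=\begin{bsmallmatrix}2+2|D|v & 1/2\\ 4|D|v & 1\end{bsmallmatrix},
\]
a matrix of determinant $2$. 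Since this depends only on $v\bmod 2$, the lemma reduces to evaluating $G_{k,D}\big|_k M_v$ in the two cases $v$ even and $v$ odd.

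The key step is to factor $M_v=\gamma N$ with $\gamma\in\Gamma_0(|D|)$ and $N$ upper triangular, so that the nebentypus relation $G_{k,D}\big|_k\gamma=\legendre{D}{d_\gamma}G_{k,D}$ (coming from $G_{k,D}\in M_k(|D|,\legendre{D}{\cdot})$) reduces the computation to a single scaling. For $v$ even I would take
\[
M_v=\begin{bsmallmatrix}1+|D|v & -|D|v/2\\ 2|D|v & 1-|D|v\end{bsmallmatrix}\begin{bsmallmatrix}2 & 1/2\\ 0 & 1\end{bsmallmatrix},
\]
whose left factor lies in $\Gamma_0(|D|)$ with lower-right entry $1-|D|v\equiv 1\pmod{|D|}$, so $\legendre{D}{d_\gamma}=1$ and we recover $G_{k,D}(2z+\tfrac12)$. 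For $v$ odd I would instead use
\[
M_v=\begin{bsmallmatrix}(1+|D|v)/2 & 1\\ |D|v & 2\end{bsmallmatrix}\begin{bsmallmatrix}4 & 0\\ 0 & 1/2\end{bsmallmatrix},
\]
whose left factor again lies in $\Gamma_0(|D|)$ but now has lower-right entry $2$, contributing $\legendre{D}{2}$. Since $G_{k,D}\big|_k\begin{bsmallmatrix}4 & 0\\ 0 & 1/2\end{bsmallmatrix}=8^{k/2}G_{k,D}(8z)$, the prefactor $2^{-k/2}$ collapses to $\legendre{D}{2}\,2^{k}G_{k,D}(8z)$, matching the claim.

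What then remains is routine: checking that each left factor has determinant $1$ with integral entries and that the scalars $\det^{k/2}(cz+d)^{-k}$ produce exactly the displayed powers of $2$. The hypothesis that $D$ is odd enters at precisely one point—and this is the only delicate step—namely the integrality of the entry $(1+|D|v)/2$ in the odd-$v$ factorization, which holds because $|D|v$ is odd when $|D|$ and $v$ are both odd. I expect the main hurdle to be guessing the correct upper-triangular targets $\begin{bsmallmatrix}2 & 1/2\\ 0 & 1\end{bsmallmatrix}$ and $\begin{bsmallmatrix}4 & 0\\ 0 & 1/2\end{bsmallmatrix}$—equivalently, recognizing in advance that even $v$ reproduces $V(G_{k,D}(2z))$ while odd $v$ produces $G_{k,D}(8z)$; once these are identified, everything follows by direct substitution into the transformation law.
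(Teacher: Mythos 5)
Your proof is correct, and it follows the same basic strategy as the paper's (rewrite $V(G_{k,D}(2z))|_k\gamma_v$ as $2^{-k/2}G_{k,D}|_kM$ for a suitable matrix $M$, then factor $M$ as an element of $\Gamma_0(|D|)$ times an upper-triangular matrix and invoke the nebentypus), but your execution is cleaner in two respects. First, the paper splits into the four cases $v=0,1,2,3$ and, for $v\in\{1,2,3\}$, produces the $\Gamma_0(|D|)$ factor via non-explicit B\'ezout coefficients $x,y$; it then has to argue separately that $x\equiv 2\pmod{D}$ (resp.\ $x\equiv1$) to evaluate $\legendre{D}{x}$, and that $x$ is odd to identify the translation $G_{k,D}(2z+\tfrac{x}{2})=G_{k,D}(2z+\tfrac12)$. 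Your explicit factorizations
$M_v=\begin{bsmallmatrix}1+|D|v & -|D|v/2\\ 2|D|v & 1-|D|v\end{bsmallmatrix}\begin{bsmallmatrix}2 & 1/2\\ 0 & 1\end{bsmallmatrix}$ for $v$ even and
$M_v=\begin{bsmallmatrix}(1+|D|v)/2 & 1\\ |D|v & 2\end{bsmallmatrix}\begin{bsmallmatrix}4 & 0\\ 0 & 1/2\end{bsmallmatrix}$ for $v$ odd
make the character values $\legendre{D}{1-|D|v}=1$ and $\legendre{D}{2}$ immediate and collapse the casework to parity, which is exactly the granularity the statement requires; I verified both products, both determinants, the integrality conditions (the odd case needing $|D|$ odd, as you note), and the powers of $2$. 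One small wording caveat: $M_v$ itself does not depend only on $v\bmod 2$ — only the final evaluation does, since the $\Gamma_0(|D|)$ factor varies with $v$ but always contributes the same character value within a parity class; your argument is sound, but you should phrase the reduction as ``the value of $G_{k,D}|_kM_v$ depends only on the parity of $v$'' rather than attributing that invariance to $M_v$.
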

\begin{proof}
First,
\begin{align}
V(G_{k,D}(2z))\Bigg|_{k}\begin{bmatrix}1 & 0\\
4|D|v & 1
\end{bmatrix}&=2^{-k/2}G_{k,D}(z)\Bigg|_{k}\begin{bmatrix}2 & 0\\
0 & 1
\end{bmatrix}\begin{bmatrix}4 & 1\\
0 & 4
\end{bmatrix}\begin{bmatrix}1 & 0\\
4|D|v & 1
\end{bmatrix}\\&=2^{-k/2}G_{k,D}(z)\Bigg|_{k}\begin{bmatrix}8(|D|v+1) & 2\\
16|D|v & 4
\end{bmatrix}.\end{align}
Now we do some casework. 
\begin{enumerate}
\item $v=0$: We have
\[V(G_{k,D}(2z))\Bigg|_{k}\begin{bmatrix}1 & 0\\
0 & 1
\end{bmatrix}=V(G_{k,D}(2z))=G_{k,D}\left(2\left(z+\frac14\right)\right)=G_{k,D}\left(2z+\frac{1}{2}\right).\]
\item $v=1, 3$: Since $v$ and $|D|$ are odd, $v|D|+1$ must be even, $\gcd(\frac{|D|v+1}{2}, |D|v)=1$, and there exist some $x, y \in \mathbb{Z}$ such that $\frac{|D|v+1}{2}x+|D|vy=1$. Note also that $x\equiv2\pmod D$ and $\legendre{D}{x}=\legendre{D}{2}$. Thus 
\begin{align}
V(G_{k,D}(2z))\Bigg|_{k}\begin{bmatrix}1 & 0\\
4|D|v & 1
\end{bmatrix}&=2^{-k/2}G_{k,D}(z)\Bigg|_{k}\begin{bmatrix}8(|D|v+1) & 2\\
16|D|v & 4
\end{bmatrix}\\
&=2^{-k/2}G_{k,D}(z)\Bigg|_{k}\hspace{-3pt}\begin{bmatrix}\frac{|D|v+1}{2} & -y\\
|D|v & x
\end{bmatrix}\hspace{-5pt}\begin{bmatrix}16 & 2x+4y\\
0 & 2
\end{bmatrix}\\
&=2^{-k/2}\left(\frac{D}{x}\right)G_{k,D}(z)\Bigg|_{k}\begin{bmatrix}16 & 2x+4y\\
0 & 2
\end{bmatrix}\\
&=2^{k}\left(\frac{D}{x}\right)G_{k,D}(8z+x+2y)\\&=\left(\frac{D}{2}\right)2^{k}G_{k,D}(8z).
\end{align}
\item $v=2$: Since $\text{gcd}(2|D|+1, 4|D|)=1$, we can pick $x,y\in\mathbb{Z}$ such that $ (2|D|+1)x+4|D|y=1$. As $4|D|y$ is even, $x$ must be odd, so $G_{k,D}(2z+\frac{x}{2})=G_{k,D}(2z+\frac{1}{2})$, and further $\left(\frac{D}{x}\right)=1$ since $x\equiv1\pmod D$. Hence
\begin{align}
V(G_{k,D}(2z))\Bigg|_{k}\begin{bmatrix}1 & 0\\
8|D| & 1
\end{bmatrix}&=2^{-k/2}G_{k,D}(z)\Bigg|_{k}\begin{bmatrix}8(2|D|+1) & 2\\
32|D| & 4
\end{bmatrix}\\&=2^{-k/2}G_{k,D}(z)\Bigg|_{k}\begin{bmatrix}2|D|+1 & -y\\
4|D| & x
\end{bmatrix}\begin{bmatrix}8 & 2x+4y\\
0 & 4
\end{bmatrix}\\
&=\left(\frac{D}{x}\right)2^{-k/2}G_{k,D}(z)\Bigg|_{k}\begin{bmatrix}8 & 2x+4y\\
0 & 4
\end{bmatrix}\\&=G_{k,D}\left(2z+\frac{x}{2}\right)\\&=G_{k,D}\left(2z+\frac{1}{2}\right).
\end{align}
\end{enumerate}
Thus, the proof is complete.

\end{proof}
The following Lemma explicitly computes each term in $\Tr^{16D}_{4D}(V\theta(|D|z))$. 
\begin{lemma}\label{lem:thetavs}
Let $D$ be an odd fundamental discriminant. For $1\leq v\leq4$, we define $\gamma_v=\begin{bsmallmatrix}1 & 0\\4|D|v & 1\end{bsmallmatrix}$. Then we have

\begin{align}
V(\theta(|D|z))\Big|_{\frac{1}{2}}\gamma_0&=\theta\left(|D|z+\frac{|D|}{4}\right),\\
V(\theta(|D|z))\Big|_{\frac{1}{2}}\gamma_1&=\begin{cases}(2i)^{1/2}(\theta(|D|z)-\theta(4|D|z)) & D>0,\\
-i(2i)^{1/2}\theta(4|D|z) &D<0,
\end{cases}
\\
V(\theta(|D|z))\Big|_{\frac{1}{2}}\gamma_2&={\rm sgn}(D)i\theta\left(|D|z-\frac{|D|}{4}\right),\\
V(\theta(|D|z))\Big|_{\frac{1}{2}}\gamma_3&=\begin{cases}(2i)^{1/2}\theta(4|D|z) & D>0,\\
    -i(2i)^{1/2}(\theta(|D|z)-\theta(4|D|z)) & D<0,
\end{cases}
\end{align}
taking the principal branch of every square root. 

\end{lemma}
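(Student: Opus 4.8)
The plan is to follow the template of the proof of Lemma~\ref{lem:gvs}, but now tracking the half-integral-weight multiplier of $\theta$. First I would reduce each term to a single slash of the bare theta function. Writing $Vg=g|_{1/2}\begin{bsmallmatrix}4&1\\0&4\end{bsmallmatrix}$ and $\theta(|D|z)=|D|^{-1/4}\,\theta\big|_{1/2}\begin{bsmallmatrix}|D|&0\\0&1\end{bsmallmatrix}$, composing slash operators yields
\[
V(\theta(|D|z))\Big|_{\frac12}\gamma_v=|D|^{-1/4}\,\theta\Big|_{\frac12}M_v,\qquad M_v=\begin{bmatrix}4|D|(1+|D|v)&|D|\\ 16|D|v&4\end{bmatrix},
\]
a matrix of determinant $16|D|$. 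The case $v=0$ is immediate: $M_0$ is already upper triangular, the determinant factors cancel, and one recovers $V(\theta(|D|z))=\theta(|D|z+|D|/4)$.

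For $v=2$ I would factor $M_2=\gamma N$ with $N$ upper triangular and $\gamma\in\Gamma_0(4)$. This is possible precisely because $1+2|D|$ is odd, so that the first column of $M_2$ reduces to the primitive pair $(1+2|D|,8)$ with $8\equiv0\pmod4$; one gets $\gamma=\begin{bsmallmatrix}1+2|D|&b\\ 8&d\end{bsmallmatrix}$ and $N=\begin{bsmallmatrix}4|D|&\beta\\0&4\end{bsmallmatrix}$ for suitable $b,\beta,d$. Applying the transformation law $\theta|_{1/2}\gamma=\legendre{c}{d}\varepsilon_d^{-1}\theta$, where $\varepsilon_d=1$ or $i$ according as $d\equiv1$ or $3\pmod 4$, and then computing the upper-triangular slash explicitly produces a single shifted theta. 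The value of $d$ modulo $|D|$ is pinned down by $\det\gamma=1$, so that $\legendre{c}{d}$ becomes a Kronecker symbol in $D$ that evaluates to $1$, while $\varepsilon_d^{-1}$ together with the congruence class of $|D|$ modulo $4$ yields the prefactor ${\rm sgn}(D)\,i$. This gives the stated ${\rm sgn}(D)\,i\,\theta(|D|z-|D|/4)$.

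The cases $v=1,3$ are the crux, and the main obstacle. Here $1+|D|v$ is even, so the first column of $M_v$ can never be reduced to a primitive pair with lower entry divisible by $4$: after cancelling the common factor the lower-left entry becomes $2$ when $|D|\equiv1\pmod 4$ (that is, $D>0$) and $1$ when $|D|\equiv3\pmod 4$ (that is, $D<0$). Consequently $\theta|_{1/2}M_v$ cannot be evaluated inside $\Gamma_0(4)$, and one must transform $\theta$ at the cusps $1/2$ and $0$, where it is not a slash-eigenfunction. I would instead decompose $M_v$ into a translation, the Fricke-type inversion $\begin{bsmallmatrix}0&-1\\4&0\end{bsmallmatrix}$, and a further translation and scaling, and then invoke the inversion formula $\theta(-1/(4z))=\sqrt{-2iz}\,\theta(z)$ and the half-shift identity $\theta(z+\tfrac12)=2\theta(4z)-\theta(z)$. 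The inversion supplies the factor $(2i)^{1/2}$; the half-shift is what splits the result into the even-square piece $\theta(4|D|z)$ and the odd-square piece $\theta(|D|z)-\theta(4|D|z)$, and which of the two appears is forced by the parity constraint encoded in $|D|\bmod 4$, accounting for both the $D>0$ versus $D<0$ dichotomy and the extra factor $-i$ in the $D<0$ rows.

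Throughout, the delicate point is not any single transformation but the consistent choice of branches: each answer is a product of several square-root automorphy factors whose arguments sit in different half-planes, so the principal-branch bookkeeping (and the correct power of $i$) has to be checked at every step. Verifying that these powers of $i$ assemble into exactly $(2i)^{1/2}$, $-i(2i)^{1/2}$, and ${\rm sgn}(D)\,i$ is where the real work lies; once the branch conventions are fixed as in the statement, the four evaluations follow from the reductions above.
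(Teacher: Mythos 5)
Your proposal is correct and follows essentially the same route as the paper's proof: reduce each term to $|D|^{-1/4}\theta|_{1/2}M_v$ with $M_v$ of determinant $16|D|$, handle $v=0$ trivially and $v=2$ by factoring through a $\Gamma_0(4)$ matrix and the theta multiplier, and handle $v=1,3$ by inserting the inversion $\begin{bsmallmatrix}0&-1\\4&0\end{bsmallmatrix}$ together with the identity $\theta(z+\tfrac12)=2\theta(4z)-\theta(z)$, with the sign of $D$ fixing the relevant congruence class. The only slip is in your $v=2$ remark: the symbol $\legendre{8}{x}$ is $-1$ (not $1$) when $D>0$, though combined with $\varepsilon_x^{-1}=-i$ it still yields the stated prefactor $\mathrm{sgn}(D)\,i$.
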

\begin{proof}Note that
\begin{align}
V(\theta(|D|z))\Bigg|_{\frac{1}{2}}\begin{bmatrix}1 & 0\\
4|D|v & 1
\end{bmatrix}&=|D|^{-1/4}\theta(z)\Bigg|_{\frac{1}{2}}\begin{bmatrix}|D| & 0\\
0 & 1
\end{bmatrix}\begin{bmatrix}4 & 1\\
0 & 4
\end{bmatrix}\begin{bmatrix}1 & 0\\
4|D|v & 1
\end{bmatrix}\\
&=|D|^{-1/4}\theta(z)\Bigg|_{\frac{1}{2}}\begin{bmatrix}|D|v+1 & |D|\\
4v & 4
\end{bmatrix}\begin{bmatrix}4|D| & 0\\
0 & 1\\
\end{bmatrix}.
\end{align}
Now we do some more casework. 
\begin{enumerate}
\item $v=0$: 
\[V(\theta(|D|z))\Bigg|_{\frac{1}{2}}\begin{bmatrix}1 & 0\\
0 & 1
\end{bmatrix}=\theta\left(|D|z+\frac{|D|}{4}\right).\]
\item $v=1$: We have

\begin{align}
V(\theta(|D|z))\Bigg|_{\frac{1}{2}}\begin{bmatrix}1 & 0\\
4|D| & 1
\end{bmatrix}&=|D|^{-1/4}\theta(z)\Bigg|_{\frac{1}{2}}\begin{bmatrix}|D|+1 & |D|\\
4 & 4
\end{bmatrix}\begin{bmatrix}4|D| & 0\\
0 & 1
\end{bmatrix}\\
&=|D|^{-1/4}\theta(z)\Bigg|_{\frac{1}{2}}\begin{bmatrix}-|D| & |D|+1\\
-4 & 4
\end{bmatrix}\begin{bmatrix}0 & -1\\
4 & 0
\end{bmatrix}\begin{bmatrix}|D| & 0\\
0 & 1
\end{bmatrix}.
\end{align}
Since $|D|$ is odd, we can choose  $x,y\in\mathbb{Z}$ such that $-|D|x-4y=1$. This gives us
\begin{align}
\begin{bmatrix}-|D| & |D|+1\\
-4 & 4
\end{bmatrix}
=\begin{bmatrix}-|D| & -y\\
-4 & x
\end{bmatrix}\begin{bmatrix}1 & x-1\\
0 & 4
\end{bmatrix}.
\end{align}
Note that $\begin{bmatrix}-|D| & -y\\
-4 & x
\end{bmatrix}\in \Gamma_0(4)$, so we have
\begin{align}
V(\theta(|D|z))\Bigg|_{\frac{1}{2}}\begin{bmatrix}1 & 0\\
4|D| & 1
\end{bmatrix}&=|D|^{-1/4}\left(\frac{-4}{x}\right)\varepsilon_{x}^{-1}\theta(z)\Bigg|_{\frac{1}{2}}\begin{bmatrix}1 & x-1\\
0 & 4
\end{bmatrix}\begin{bmatrix}0 & -1\\
4 & 0
\end{bmatrix}\begin{bmatrix}|D| & 0\\
0 & 1
\end{bmatrix}\\
&=2^{-1/2}|D|^{-1/4}\left(\frac{-4}{x}\right)\varepsilon_{x}^{-1}\theta\left(\frac{z+x-1}{4}\right)\Bigg|_{\frac{1}{2}}\begin{bmatrix}0 & -1\\
4 & 0
\end{bmatrix}\begin{bmatrix}|D| & 0\\
0 & 1
\end{bmatrix},
\end{align}
where 
$$\varepsilon_x=\begin{cases}
    1 &x\equiv 1\pmod4,\\ i &x\equiv 3\pmod 4.
\end{cases}$$
Now we have two cases since $-|D|x-4y=1$ and the sign of $D$ determines $\varepsilon_x$.
\begin{enumerate} 
\item If $D>0$, then $|D|\equiv 1\pmod 4$, so $x\equiv3\pmod 4$, $\theta\left(\frac{z+x-1}{4}\right)=\theta\left(\frac{z}{4}+\frac{1}{2}\right)=2\theta(z)-\theta(\frac{z}{4})$, $\varepsilon_{x}=i$, and $\left(\frac{-4}{x}\right)=-1$. So we have 
\begin{align}
V(\theta(|D|z))\Bigg|_{\frac{1}{2}}\begin{bmatrix}1 & 0\\
4|D| & 1
\end{bmatrix}&=2^{-1/2}|D|^{-1/4}\left(\frac{-4}{x}\right)\varepsilon_{x}^{-1}\theta\left(\frac{z+x-1}{4}\right)\Bigg|_{\frac{1}{2}}\begin{bmatrix}0 & -1\\
4 & 0
\end{bmatrix}\begin{bmatrix}|D| & 0\\
0 & 1
\end{bmatrix}\\
&=i2^{-1/2}|D|^{-1/4}\left(2\theta(z)-\theta\left(\frac{z}{4}\right)\right)\Bigg|_{\frac{1}{2}}\begin{bmatrix}0 & -1\\
4 & 0
\end{bmatrix}\begin{bmatrix}|D| & 0\\
0 & 1
\end{bmatrix}.
\end{align}
Explicitly computing these, we get
\begin{align}
\theta(z)\Bigg|_{\frac{1}{2}}\begin{bmatrix}0 & -1\\
4 & 0
\end{bmatrix}\begin{bmatrix}|D| & 0\\
0 & 1
\end{bmatrix}&=i^{-1/2}|D|^{1/4}\theta(|D|z),\label{eq:transformationthetaz}\\
\theta\left(\frac{z}{4}\right)\Bigg|_{\frac{1}{2}}\begin{bmatrix}0 & -1\\
4 & 0
\end{bmatrix}\begin{bmatrix}|D| & 0\\
0 & 1
\end{bmatrix}&=2^{1/2}\theta(z)\Bigg|_{\frac{1}{2}}\begin{bmatrix}1 & 0\\
0 & 4
\end{bmatrix}\begin{bmatrix}0 & -1\\
4 & 0
\end{bmatrix}\begin{bmatrix}|D| & 0\\
0 & 1\\
\end{bmatrix}\\
&=2^{1/2}\theta(z)\Bigg|_{\frac{1}{2}}\begin{bmatrix}0 & -1\\
4 & 0
\end{bmatrix}\begin{bmatrix}4 & 0\\
0 & 1
\end{bmatrix}\begin{bmatrix}|D| & 0\\
0 & 1
\end{bmatrix}\\
&=2^{1/2}i^{-1/2}\theta(z)\Bigg|_{\frac{1}{2}}\begin{bmatrix}4 & 0\\
0 & 1
\end{bmatrix}\begin{bmatrix}|D| & 0\\
0 & 1
\end{bmatrix}\\
&=2i^{-1/2}|D|^{1/4}\theta(4|D|z). \label{eq:transformationthetaz/4}
\end{align}
So our expression simplifies to 
\begin{align}
i2^{-1/2}|D|^{-1/4}\left(2\theta(z)-\theta\left(\frac{z}{4}\right)\right)\Bigg|_{\frac{1}{2}}\begin{bmatrix}0 & -1\\
4 & 0
\end{bmatrix}\begin{bmatrix}|D| & 0\\
0 & 1
\end{bmatrix}=(2i)^{1/2}(\theta(|D|z)-\theta(4|D|z)).
\end{align}
\item If $D<0$, then $|D|\equiv3\pmod 4$, $x\equiv1\pmod 4$, $\theta\left(\frac{z+x-1}{4}\right)=\theta(\frac{z}{4}),\varepsilon_{x}=1,$ and $\left(\frac{-4}{x}\right)=1$. So we have 
\begin{align}
V(\theta(|D|z))\Bigg|_{\frac{1}{2}}\begin{bmatrix}1 & 0\\
4|D| & 1
\end{bmatrix}&=2^{-1/2}|D|^{-1/4}\left(\frac{-4}{x}\right)\varepsilon_{x}^{-1}\theta\left(\frac{z+x-1}{4}\right)\Bigg|_{\frac{1}{2}}\begin{bmatrix}0 & -1\\
4 & 0
\end{bmatrix}\begin{bmatrix}|D| & 0\\
0 & 1
\end{bmatrix}\\
&=2^{-1/2}|D|^{-1/4}\theta\left(\frac{z}{4}\right)\Bigg|_{\frac{1}{2}}\begin{bmatrix}0 & -1\\
4 & 0
\end{bmatrix}\begin{bmatrix}|D| & 0\\
0 & 1
\end{bmatrix}\\
&=-i(2i)^{1/2}\theta(4|D|z).
\end{align}
\end{enumerate}

\item $v=2$: Since $2|D|+1$ is coprime to $8$, we can find $x,y\in\mathbb{Z}$ such that $(2|D|+1)x+8y=1$. 
\begin{align}
V(\theta(|D|z))\Bigg|_{\frac{1}{2}}\begin{bmatrix}1 & 0\\
8|D| & 1
\end{bmatrix}&=|D|^{-1/4}\theta(z)\Bigg|_{\frac{1}{2}}\begin{bmatrix}2|D|+1 & |D|\\
8 & 4
\end{bmatrix}\begin{bmatrix}4|D| & 0\\
0 & 1
\end{bmatrix}\\
&=|D|^{-1/4}\theta(z)\Bigg|_{\frac{1}{2}}\begin{bmatrix}2|D|+1 & -y\\
8 & x
\end{bmatrix}\begin{bmatrix}1 & |D|x+4y\\
0 & 4
\end{bmatrix}\begin{bmatrix}4|D| & 0\\
0 & 1
\end{bmatrix}.
\end{align}
Now that $\begin{bmatrix}2|D|+1 & -y\\
8 & x \end{bmatrix}$ is in $\Gamma_{0}(4)$, we get
\begin{align}
V(\theta(|D|z))\Bigg|_{\frac{1}{2}}\begin{bmatrix}1 & 0\\
8|D| & 1
\end{bmatrix}&=|D|^{-1/4}\varepsilon_{x}^{-1}\left(\frac{8}{x}\right)\theta(z)\Bigg|_{\frac{1}{2}}\begin{bmatrix}1 & \frac{1-x}{2}\\
0 & 4
\end{bmatrix}\begin{bmatrix}4|D| & 0\\
0 & 1
\end{bmatrix}\\
&=|D|^{-1/4}\varepsilon_{x}^{-1}\left(\frac{8}{x}\right)2^{-1/2}\theta\left(\frac{z}{4}+\frac{1-x}{8}\right)\Bigg|_{\frac{1}{2}}\begin{bmatrix}4|D| & 0\\
0 & 1
\end{bmatrix}\\
&=\varepsilon_{x}^{-1}\left(\frac{8}{x}\right)\theta\left(|D|z+\frac{1-x}{8}\right)
\end{align}
As $(2|D|+1)x+8y=1$ and the sign of $D$ determines $\varepsilon_x$, we do casework again. 
\begin{enumerate} 
\item If $D>0$, then $|D|\equiv1\pmod 4$, which implies that $3x\equiv1\pmod 8$, $x\equiv3\pmod 8$, $\varepsilon_x=i$ and $\legendre{8}{x}=\legendre{8}{3}=-1$. Note also that $\theta(|D|z+\frac{1-x}{8})=\theta(|D|z-\frac{1}{4})$. 
\item If $D<0$, then $|D|\equiv3\pmod 4$, which gives $7x\equiv1\pmod 8, x\equiv7\pmod 8$, $\legendre{8}{x}=\legendre{8}{7}=1$, $\varepsilon_x=i$ and $\theta(|D|z+\frac{1-x}{8})=\theta(|D|z-\frac{3}{4})$. 
\end{enumerate}
Combining these two cases, we can write  
\[V(\theta(|D|z))\Bigg|_{\frac{1}{2}}\begin{bmatrix}1 & 0\\
4|D|v & 1
\end{bmatrix}=\text{sgn}(D)i\theta\left(|D|z-\frac{|D|}{4}\right).\]
\item $v=3$: Note that
\begin{align}
V(\theta(|D|z))\Bigg|_{\frac{1}{2}}\begin{bmatrix}1 & 0\\
4|D| & 1
\end{bmatrix}&=|D|^{-1/4}\theta(z)\Bigg|_{\frac{1}{2}}\begin{bmatrix}3|D|+1 & |D|\\
12 & 4
\end{bmatrix}\begin{bmatrix}4|D| & 0\\
0 & 1
\end{bmatrix}\\
&=|D|^{-1/4}\theta(z)\Bigg|_{\frac{1}{2}}\begin{bmatrix}-|D| & 3|D|+1\\
-4 & 12
\end{bmatrix}\begin{bmatrix}0 & -1\\
4 & 0
\end{bmatrix}\begin{bmatrix}|D| & 0\\
0 & 1
\end{bmatrix}.
\end{align}
Since $\gcd(4,D)=1$, we can pick $x,y\in\mathbb{Z}$ such that $-|D|x-4y=1$. This gives us
\begin{align}
\begin{bmatrix}-|D| & 3|D|+1\\
-4 & 12
\end{bmatrix}
\begin{bmatrix}0 & -1\\
4 & 0
\end{bmatrix}\begin{bmatrix}|D| & 0\\
0 & 1
\end{bmatrix}&
=\begin{bmatrix}-|D| & -y\\
-4 & x
\end{bmatrix}\begin{bmatrix}1 & x-3\\
0 & 4
\end{bmatrix}\begin{bmatrix}0 & -1\\
4 & 0
\end{bmatrix}\begin{bmatrix}|D| & 0\\
0 & 1
\end{bmatrix}.
\end{align}
Note that $\begin{bmatrix}-|D| & -y\\
-4 & x
\end{bmatrix}$ is in $\Gamma_{0}(4)$, so we have
\begin{align}
V(\theta(|D|z))\Bigg|_{\frac{1}{2}}\begin{bmatrix}1 & 0\\
4|D| & 1
\end{bmatrix}&=|D|^{-1/4}\left(\frac{-4}{x}\right)\varepsilon_{x}^{-1}\theta(z)\Bigg|_{\frac{1}{2}}\begin{bmatrix}1 & x-3\\
0 & 4
\end{bmatrix}\begin{bmatrix}0 & -1\\
4 & 0
\end{bmatrix}\begin{bmatrix}|D| & 0\\
0 & 1
\end{bmatrix}\\
&=2^{-1/2}|D|^{-1/4}\left(\frac{-4}{x}\right)\varepsilon_{x}^{-1}\theta\left(\frac{z+x-3}{4}\right)\Bigg|_{\frac{1}{2}}\begin{bmatrix}0 & -1\\
4 & 0
\end{bmatrix}\begin{bmatrix}|D| & 0\\
0 & 1
\end{bmatrix}.
\end{align}
Again, we have two cases.
\begin{enumerate}  
\item If $D>0$, then $|D|=1\pmod 4$, $x\equiv3\pmod 4$, $\theta\left(\frac{z+x-3}{4}\right)=\theta\left(\frac{z}{4}\right)$ and $\varepsilon_{x}=i,\left(\frac{-4}{x}\right)=-1$. Applying \eqref{eq:transformationthetaz/4} to above equation, we get
\begin{align}
V(\theta(|D|z))\Bigg|_{\frac{1}{2}}\begin{bmatrix}1 & 0\\
4|D| & 1
\end{bmatrix}&=2^{-1/2}|D|^{-1/4}\left(\frac{-4}{x}\right)\varepsilon_{x}^{-1}\theta\left(\frac{z+x-3}{4}\right)\Bigg|_{\frac{1}{2}}\begin{bmatrix}0 & -1\\
4 & 0
\end{bmatrix}\begin{bmatrix}|D| & 0\\
0 & 1
\end{bmatrix}\\
&=i2^{-1/2}|D|^{-1/4}\theta\left(\frac{z}{4}\right)\Bigg|_{\frac{1}{2}}\begin{bmatrix}0 & -1\\
4 & 0
\end{bmatrix}\begin{bmatrix}|D| & 0\\
0 & 1
\end{bmatrix}\\
&=(2i)^{1/2}\theta(4|D|z).
\end{align}
\item If $D<0$, then $|D|=3\pmod 4$, $x\equiv1\pmod 4$, $\theta\left(\frac{z+x-3}{4}\right)=\theta\left(\frac{z}{4}+\frac{1}{2}\right)=2\theta(z)-\theta(\frac{z}{4})$, $\varepsilon_{x}=1$, and $\left(\frac{-4}{x}\right)=1$. So we get 
\begin{align}
V(\theta(|D|z))\Bigg|_{\frac{1}{2}}\begin{bmatrix}1 & 0\\
12|D| & 1
\end{bmatrix}&=2^{-1/2}|D|^{-1/4}\left(2\theta(z)-\theta\left(\frac{z}{4}\right)\right)\Bigg|_{\frac{1}{2}}\begin{bmatrix}0 & -1\\
4 & 0
\end{bmatrix}\begin{bmatrix}|D| & 0\\
0 & 1
\end{bmatrix}\\
&=-i(2i)^{1/2}(\theta(|D|z)-\theta(4|D|z)),
\end{align}
where the last equality comes from \eqref{eq:transformationthetaz} and \eqref{eq:transformationthetaz/4}.
\end{enumerate}
\end{enumerate}
This finishes the proof.
\end{proof}
We also need the following two technical lemmas.
\begin{lemma}\label{lem:thetaodds}We have that 
\[V\theta(|D|z)\Big|_{\frac{1}{2}}\gamma_1+V\theta(|D|z)\Big|_{\frac{1}{2}}\gamma_3 = \varepsilon_{|D|}^{-1}(2i)^{1/2}\theta(|D|z).\]
\end{lemma}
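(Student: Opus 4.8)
The plan is to read off the two summands $V\theta(|D|z)|_{1/2}\gamma_1$ and $V\theta(|D|z)|_{1/2}\gamma_3$ directly from Lemma \ref{lem:thetavs}, add them, and observe that the $\theta(4|D|z)$ contributions cancel, leaving only a $\theta(|D|z)$ term with the claimed scalar. Since the explicit form of each summand differs according to the sign of $D$, I would split into the two cases $D>0$ and $D<0$ exactly as in Lemma \ref{lem:thetavs}, and in each case simplify the sum, then unify the two answers using the value of $\varepsilon_{|D|}$.

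Concretely, for $D>0$ Lemma \ref{lem:thetavs} gives $V\theta(|D|z)|_{1/2}\gamma_1=(2i)^{1/2}(\theta(|D|z)-\theta(4|D|z))$ and $V\theta(|D|z)|_{1/2}\gamma_3=(2i)^{1/2}\theta(4|D|z)$, so the sum is $(2i)^{1/2}\theta(|D|z)$. For $D<0$ the two summands are $-i(2i)^{1/2}\theta(4|D|z)$ and $-i(2i)^{1/2}(\theta(|D|z)-\theta(4|D|z))$, whose sum is $-i(2i)^{1/2}\theta(|D|z)$. It then remains to check that the prefactor matches $\varepsilon_{|D|}^{-1}(2i)^{1/2}$ in both cases: when $D>0$ we have $|D|\equiv 1\pmod 4$, so $\varepsilon_{|D|}=1$ and $\varepsilon_{|D|}^{-1}=1$, matching the coefficient $(2i)^{1/2}$; when $D<0$ we have $|D|\equiv 3\pmod 4$, so $\varepsilon_{|D|}=i$ and $\varepsilon_{|D|}^{-1}=i^{-1}=-i$, matching the coefficient $-i(2i)^{1/2}$. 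This reconciles both cases with the single closed form in the statement.

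There is essentially no obstacle here: the lemma is a two-line bookkeeping consequence of Lemma \ref{lem:thetavs}, and the only point requiring any care is confirming the arithmetic that $\varepsilon_{|D|}^{-1}$ correctly encodes the sign-of-$D$ dichotomy (i.e. that $\varepsilon_{|D|}=i^{(|D|-1)/2}$ reproduces $1$ for $D>0$ and $-i$ for $D<0$). I would present the proof as a short direct computation with the two cases displayed, since this keeps the dependence on Lemma \ref{lem:thetavs} transparent and makes the cancellation of the $\theta(4|D|z)$ terms explicit.
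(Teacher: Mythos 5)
Your proof is correct and is exactly what the paper intends: the paper's own proof simply states that the lemma is a trivial consequence of Lemma \ref{lem:thetavs}, and your two-case computation (cancellation of the $\theta(4|D|z)$ terms, then matching the prefactor against $\varepsilon_{|D|}^{-1}$) is the spelled-out version of that. The only nitpick is the parenthetical closed form $\varepsilon_{|D|}=i^{(|D|-1)/2}$, which does not agree with the paper's case-defined $\varepsilon_n$ (e.g.\ $n=5$ gives $i^2=-1$, not $1$); since your actual argument uses only the case definition, this does not affect correctness.
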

\begin{proof}
It is a trivial consequence of Lemma \ref{lem:thetavs}. 
\end{proof}
\begin{lemma}\label{lem:thetaevens}We have that
\[V\theta(|D|z)\Big|_{\frac{1}{2}}\gamma_0+V\theta(|D|z)\Big|_{\frac{1}{2}}\gamma_2=(1+i{\rm sgn}(D))\theta(|D|z).\]
\end{lemma}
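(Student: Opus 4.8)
The plan is to reduce directly to the two relevant cases of Lemma \ref{lem:thetavs} and then expand everything in terms of the $q$-series of $\theta$. By Lemma \ref{lem:thetavs} applied with $v=0$ and $v=2$, the left-hand side equals
\[
\theta\!\left(|D|z+\tfrac{|D|}{4}\right)+{\rm sgn}(D)\,i\,\theta\!\left(|D|z-\tfrac{|D|}{4}\right),
\]
so it suffices to show that this sum equals $(1+i\,{\rm sgn}(D))\theta(|D|z)$.

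Next I would expand $\theta(w)=\sum_{n\in\mathbb{Z}}e^{2\pi i n^2 w}$, which gives
\[
\theta\!\left(|D|z\pm\tfrac{|D|}{4}\right)=\sum_{n\in\mathbb{Z}}e^{2\pi i n^2 |D| z}\,e^{\pm \pi i n^2 |D|/2}.
\]
The phase $e^{\pm \pi i n^2 |D|/2}$ is a fourth root of unity depending only on $n^2|D|\bmod 4$. Splitting the sum according to the parity of $n$, I would write $\theta(|D|z)=A+B$, where $A$ collects the even-$n$ terms and $B$ the odd-$n$ terms. For even $n$ one has $n^2|D|\equiv 0\pmod 4$, so the phase is trivial and both shifted thetas contribute $A$ unchanged. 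For odd $n$, $n^2\equiv 1\pmod 4$, so the phase is $e^{\pm\pi i|D|/2}$; since $|D|$ is odd this equals $i\,{\rm sgn}(D)$ for the $+$ sign and $-i\,{\rm sgn}(D)$ for the $-$ sign, using that $|D|\equiv1\pmod4$ precisely when $D>0$.

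Finally I would assemble the pieces. Writing $\omega=i\,{\rm sgn}(D)$, one has $\theta(|D|z+|D|/4)=A+\omega B$ and $\theta(|D|z-|D|/4)=A+\bar\omega B$ with $\bar\omega=-i\,{\rm sgn}(D)$, so the left-hand side becomes
\[
(A+\omega B)+i\,{\rm sgn}(D)(A+\bar\omega B)=(1+i\,{\rm sgn}(D))A+\bigl(\omega+i\,{\rm sgn}(D)\,\bar\omega\bigr)B.
\]
Since ${\rm sgn}(D)^2=1$, we get $i\,{\rm sgn}(D)\,\bar\omega=1$ together with $\omega=i\,{\rm sgn}(D)$, so the $B$-coefficient is also $1+i\,{\rm sgn}(D)$, and the whole expression collapses to $(1+i\,{\rm sgn}(D))(A+B)=(1+i\,{\rm sgn}(D))\theta(|D|z)$, as desired. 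The only point requiring care is the bookkeeping of the fourth-root-of-unity phases in the two sign cases $D>0$ and $D<0$; once those are pinned down, the recombination of the even and odd parts is immediate, and everything else is a routine manipulation of the theta series.
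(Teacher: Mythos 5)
Your proposal is correct and follows essentially the same route as the paper: both invoke Lemma \ref{lem:thetavs} for $v=0,2$ to reduce to the sum $\theta(|D|z+\tfrac{|D|}{4})+i\,\mathrm{sgn}(D)\,\theta(|D|z-\tfrac{|D|}{4})$, then expand the theta series and evaluate the fourth-root-of-unity phases according to the parity of $n$ (the paper writes these as coefficients $a(n)$, $b(n)$, which is exactly your $A+\omega B$, $A+\bar\omega B$ decomposition). The phase computation $e^{\pm\pi i|D|/2}=\pm i\,\mathrm{sgn}(D)$ for odd $D\equiv1\pmod 4$ is handled correctly, so nothing is missing.
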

\begin{proof}By Lemma  \ref{lem:thetavs}, we have
\begin{align}V\theta(|D|z)\Big|_{\frac{1}{2}}\gamma_0+V\theta(|D|z)\Big|_{\frac{1}{2}}\gamma_2=\theta\left(|D|z+\frac{|D|}{4}\right)+i\text{sgn}(D)\theta\left(|D|z-\frac{|D|}{4}\right)\end{align}
Note that 
\[\theta\left(|D|z+\frac{|D|}{4}\right) = \sum\limits_{n\in\mathbb{Z}}e^{2\pi i \frac{n^2|D|}{4}}e^{2\pi i n^2|D|z}=\sum\limits_{n\in\mathbb{Z}}a(n)e^{2\pi i n^2|D|z}\]
where $a(n)=i\text{sgn}(D)$ if $n$ is odd and $a(n)=1$ if $n$ is even. On the other hand, 
\[\theta\left(|D|z-\frac{|D|}{4}\right) = \sum\limits_{n\in\mathbb{Z}}e^{2\pi i \frac{-n^2|D|}{4}}e^{2\pi i n^2|D|z}=\sum\limits_{n\in\mathbb{Z}}b(n)e^{2\pi i n^2|D|z}\]
where $b(n)=-i\text{sgn}(D)$ if $n$ is odd and $b(n)=1$ if $n$ is even. Hence 
\begin{align}
\theta\left(|D|z+\frac{|D|}{4}\right)+{\rm sgn}(D)i\theta\left(|D|z-\frac{|D|}{4}\right)&=
\sum\limits_{n\in\mathbb{Z}}(a(n)+i\text{sgn}(D)(b(n))e^{2\pi i n^2|D|z}\\
&= (1+i\text{sgn}(D))\theta(|D|z),
\end{align}
as desired.
\end{proof}
Now we are ready to prove the alternate formula for $\mathcal{G}_{D,k,e}$ \eqref{eq:G_D,k,e} promised at the beginning of this section.
\begin{proposition}\label{prop:projectioncomputation}
Let $k\ge 4$ and $e>0$ be integers such that $k+2e=\ell$ and let $D$ be an odd fundamental discriminant such that $(-1)^\ell D>0$.
Then  
\begin{equation} \label{eq:gkdepr}
\mathcal{G}_{D, k, e}(z)=\Tr^{4D}_4 [G_{k,D}(4z), \theta(|D|z)]_e
\end{equation}
\end{proposition}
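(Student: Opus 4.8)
The plan is to unwind the definition \eqref{eq:G_D,k,e} of $\mathcal{G}_{D,k,e}$ and verify that the projection $\pr^+$, together with the normalization $\frac32(1-\legendre{D}{2}2^{-k})^{-1}$, exactly reproduces $\Tr^{4D}_4[G_{k,D}(4z),\theta(|D|z)]_e$. First I would substitute $G_{k,4D}(z)=G_{k,D}(4z)-2^{-k}\legendre{D}{2}G_{k,D}(2z)$ from \eqref{eq:G_k,4D} and use bilinearity of the Rankin-Cohen bracket to split $[G_{k,4D},\theta(|D|z)]_e$ into a $G_{k,D}(4z)$-piece and a $G_{k,D}(2z)$-piece. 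Writing $\pr^+=\frac13\Id+\frac{1-(-1)^\ell i}{6}\Tr^{16}_4 V$ as in \eqref{eq:defofprojection}, the essential move is to push $V$ through the trace: by Lemma \ref{lem:vswap} and the composition rule $\Tr^{16}_4\circ\Tr^{16D}_{16}=\Tr^{16D}_4=\Tr^{4D}_4\circ\Tr^{16D}_{4D}$, the second term becomes $\frac{1-(-1)^\ell i}{6}\Tr^{4D}_4\Tr^{16D}_{4D}V[G_{k,4D},\theta(|D|z)]_e$.

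Next I would exploit that $V$, being the slash operator by $\begin{bsmallmatrix}4&1\\0&4\end{bsmallmatrix}$, and the slash operators $|\gamma_v$ are compatible with Rankin-Cohen brackets: $V[f,g]_e=[Vf,Vg]_e$ and $[f,g]_e|_{\ell+1/2}\gamma_v=[f|_k\gamma_v,g|_{1/2}\gamma_v]_e$. Decomposing $\Tr^{16D}_{4D}$ over the cosets $\gamma_v=\begin{bsmallmatrix}1&0\\4|D|v&1\end{bsmallmatrix}$, $v=0,1,2,3$, reduces everything to evaluating $V(G_{k,D}(4z))|_k\gamma_v$, $V(G_{k,D}(2z))|_k\gamma_v$ and $V(\theta(|D|z))|_{1/2}\gamma_v$. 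The key observation for the first is that $G_{k,D}(4z)\in M_k(\Gamma_0(4|D|),\legendre{D}{\cdot})$ is fixed by each $\gamma_v\in\Gamma_0(4|D|)$ (indeed $V(G_{k,D}(4z))=G_{k,D}(4z)$ and the lower-right entry of $\gamma_v$ is $1$), so this piece factors out and pairs with $\sum_v V\theta|_{1/2}\gamma_v$, which Lemmas \ref{lem:thetaodds} and \ref{lem:thetaevens} evaluate as an explicit multiple of $\theta(|D|z)$. The $G_{k,D}(2z)$-piece is handled by Lemma \ref{lem:gvs}, giving $G_{k,D}(2z+\frac12)$ for even $v$ and $\legendre{D}{2}2^kG_{k,D}(8z)$ for odd $v$, each again paired with the theta terms of Lemma \ref{lem:thetavs} grouped via Lemmas \ref{lem:thetaodds} and \ref{lem:thetaevens}.

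At this stage every summand is a bracket of some dilate or translate of $G_{k,D}$ against $\theta(|D|z)$, multiplied by an a priori complex constant, and the main obstacle is the bookkeeping of these constants: I must check that the phases $\varepsilon_{|D|}$, $(2i)^{1/2}$, $i\,\mathrm{sgn}(D)$ and $(-1)^\ell i$ conspire to yield real rational coefficients. This is exactly where the hypothesis $(-1)^\ell D>0$ enters, tying the sign of $D$ to the parity of $\ell$; treating $D>0$ (so $\ell$ even) and $D<0$ (so $\ell$ odd) separately, one finds for instance that $\frac{1-(-1)^\ell i}{6}\bigl((1+i\,\mathrm{sgn}(D))+\varepsilon_{|D|}^{-1}(2i)^{1/2}\bigr)=\frac23$, while the even and odd theta combinations attached to the $G_{k,D}(2z)$-piece each collapse to $\frac13$.

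After collecting terms, bilinearity lets me pull everything into a single bracket $\Tr^{4D}_4[\Phi,\theta(|D|z)]_e$ with
\[
\Phi=G_{k,D}(4z)-\tfrac13 2^{-k}\legendre{D}{2}\bigl(G_{k,D}(2z)+G_{k,D}(2z+\tfrac12)\bigr)-\tfrac13 G_{k,D}(8z).
\]
Finally I would invoke Lemma \ref{lem:gsimplified} to simplify $\Phi=\tfrac23(1-\legendre{D}{2}2^{-k})G_{k,D}(4z)$, whereupon the normalization constant in \eqref{eq:G_D,k,e} cancels and $\mathcal{G}_{D,k,e}(z)=\Tr^{4D}_4[G_{k,D}(4z),\theta(|D|z)]_e$, as claimed. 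The conceptual steps (bilinearity, the vswap/covariance manipulations, the coset reduction) are clean; I expect essentially all the difficulty to lie in the phase accounting of the previous paragraph and in keeping the dilates $G_{k,D}(2z),G_{k,D}(2z+\tfrac12),G_{k,D}(8z)$ straight so that Lemma \ref{lem:gsimplified} applies verbatim.
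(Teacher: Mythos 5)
Your proposal follows essentially the same route as the paper's proof: substituting \eqref{eq:G_k,4D}, pushing $V$ through the trace via Lemma \ref{lem:vswap} and transitivity of traces, decomposing $\Tr^{16D}_{4D}$ over the cosets $\gamma_v$, invoking Lemmas \ref{lem:gvs}, \ref{lem:thetaodds}, \ref{lem:thetaevens} for the individual terms, and finishing with Lemma \ref{lem:gsimplified}; your phase computations (each combination collapsing to $\tfrac13$, total $\tfrac23$ on the $G_{k,D}(4z)$-piece) and the final $\Phi=\tfrac23(1-\legendre{D}{2}2^{-k})G_{k,D}(4z)$ agree exactly with the paper's $g_D$. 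The argument is correct as outlined.
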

\begin{proof}
We closely follow \cite[p.~195]{Kohnen-Zagier1981}, where a similar result is implicit in the proof of formulas \cite[(6), (7)]{Kohnen-Zagier1981}. 
Write $h=[G_{k,4D}(z),\theta(|D|z)]_e$. By Lemma \ref{lem:vswap} we get
\begin{align}
\mathcal{G}_{D,k,e}&=\frac32\left(1-\legendre{D}{2}2^{-k}\right)^{-1}\pr^+\Tr_4^{4D}(h)\\
&=\frac32\left(1-\legendre{D}{2}2^{-k}\right)^{-1}\left(\frac{1-(-1)^\ell i}{6}\Tr_4^{16}V(\Tr^{4D}_{4}(h)) + \frac{1}{3}\Tr^{4D}_{4}(h)\right)\\
&=\frac32\left(1-\legendre{D}{2}2^{-k}\right)^{-1}\Tr_4^{4D}\left( \frac{1-(-1)^\ell i}{6}\Tr^{16D}_{4D}(V(h)) + \frac{1}{3}h\right)\\
&=\frac32\left(1-\legendre{D}{2}2^{-k}\right)^{-1}\Tr_4^{4D}g_D,\label{eq:prcalculation}
\end{align}
with
$$g_D =  \frac{1-(-1)^k i}{6}\Tr^{16D}_{4D}(V(h)) + \frac{1}{3}h.$$ 
Note that $k\equiv \ell$ mod 2, so we can substitute in $(-1)^k$ for $(-1)^\ell$ above. We now compute $g_D$. The matrices $\gamma_v = \begin{bsmallmatrix}1 & 0\\4|D|v & 1\end{bsmallmatrix}$, where $v=0,1,2,3$, form a set of coset representatives for $\Gamma_0(16|D|)\backslash\Gamma_0(4|D|)$ \cite[p.~195]{Kohnen-Zagier1981}. Then we have 
\begin{align}
\Tr^{16D}_{4D}(V(h))&=\Tr^{16D}_{4D}(V[G_{k,4D}(z),\theta(|D|z)]_e)\\
&=\Tr^{16D}_{4D}\left[VG_{k,4D}(z),V\theta(|D|z)\right]_e\\
&=\Tr^{16D}_{4D}\left[G_{k,D}(4z)-2^{-k}\left(\frac{D}{2}\right)V(G_{k,D}(2z)),V\theta(|D|z)\right]_e\\
&=\sum\limits_{\gamma_v}\left[G_{k,D}(4z)-2^{-k}\left(\frac{D}{2}\right)V(G_{k,D}(2z)),V\theta(|D|z)\right]_e\Bigg|_{k+\frac{1}{2}+2e}\gamma_v\\
&=\sum\limits_{\gamma_v}\left[G_{k,D}(4z)\Big|_{k}\gamma_v-2^{-k}\left(\frac{D}{2}\right)V(G_{k,D}(2z))\Big|_{k}\gamma_v,V\theta(|D|z)\Big|_{\frac{1}{2}}\gamma_v\right]_e.
\end{align}
Since $\gamma_v\in \Gamma_0(4|D|)$, $G_{k,D}(4z)\Big|_{k}\gamma_v=G_{k,D}(4z)$. By Lemma \ref{lem:gvs}, we get 
\begin{align}
\Tr^{16D}_{4D}(V(h))&=\sum\limits_{\gamma_v}\left[G_{k,D}(4z)\Big|_{k}\gamma_v-2^{-k}\left(\frac{D}{2}\right)V(G_{k,D}(2z))\Big|_{k}\gamma_v,V\theta(|D|z)\Big|_{\frac{1}{2}}\gamma_v\right]_e\\
&=\left[G_{k,D}(4z)-2^{-k}\left(\frac{D}{2}\right)G_{k,D}\left(2z+\frac{1}{2}\right),V\theta(|D|z)\Big|_{\frac{1}{2}}\gamma_0+V\theta(|D|z)\Big|_{\frac{1}{2}}\gamma_2\right]_e\\
&\hspace{20pt}+\left[G_{k,D}(4z)- G_{k,D}(8z),V\theta(|D|z)\Big|_{\frac{1}{2}}\gamma_1+V\theta(|D|z)\Big|_{\frac{1}{2}}\gamma_3\right]_e.
\end{align}
Using Lemmas \ref{lem:thetaodds} and \ref{lem:thetaevens} and noting that sgn$(D)=(-1)^k$ by our assumption, we can simplify this to
\begin{align}
\Tr^{16D}_{4D}(V(h))
&=\left[G_{k,D}(4z)-2^{-k}\left(\frac{D}{2}\right)G_{k,D}\left(2z+\frac{1}{2}\right),(1-i(-1)^k)\theta(|D|z)\right]_e\\
&\hspace{20pt}+\left[G_{k,D}(4z)- G_{k,D}(8z),\varepsilon_{|D|}^{-1}(2i)^{1/2}\theta(|D|z)\right]_e.
\end{align}
Now we can finally compute the projection. 
\begin{align}
g_D(z)&=  \frac{1-(-1)^k i}{6}\Tr^{16D}_{4D}(V(h(z))) + \frac{1}{3}h(z)\\
&= \frac{1-i(-1)^k}{6}\left[G_{k,D}(4z)-2^{-k}\left(\frac{D}{2}\right)G_{k,D}\left(2z+\frac{1}{2}\right),(1+i(-1)^k)\theta(|D|z)\right]_e\\
&\hspace{20pt}+ \frac{1-i(-1)^k}{6}\left[G_{k,D}(4z)- G_{k,D}(8z),\varepsilon_{|D|}^{-1}(2i)^{1/2}\theta(|D|z)\right]_e\\
&\hspace{20pt}+ \frac{1}{3}\left[G_{k,D}(4z)-2^{-k}\left(\frac{D}{2}\right)G_{k,D}(2z),\theta(|D|z)\right]_e\\
&=\frac{1}{3}\left[G_{k,D}(4z)-2^{-k}\left(\frac{D}{2}\right)G_{k,D}\left(2z+\frac{1}{2}\right),\theta(|D|z)\right]_e\\
&\hspace{20pt}+\frac{1}{3}\left[G_{k,D}(4z)- G_{k,D}(8z),\theta(|D|z)\right]_e\\
&\hspace{20pt}+ \frac{1}{3}\left[G_{k,D}(4z)-2^{-k}\left(\frac{D}{2}\right)G_{k,D}(2z),\theta(|D|z)\right]_e\\
&=\frac{1}{3}\left[G_{k,D}(4z) -G_{k,D}(8z)-2^{-k}\left(\frac{D}{2}\right) \left(G_{k,D}\left(2z+\frac{1}{2}\right) + G_{k,D}(2z)\right), \theta(|D|z)\right]_e\\
&\hspace{20pt}+\frac{2}{3}[G_{k,D}(4z),\theta(|D|z)]_e.
\end{align}
Using Lemma \ref{lem:gsimplified}, we can simplify the first term to get 
\begin{align}
g_D(z)&=\frac{1}{3}\left[-\left(\frac{D}{2}\right)2^{-k+1}G_{k,D}(4z), \theta(|D|z)\right]_e+\frac{2}{3}[G_{k,D}(4z),\theta(|D|z)]_e\\
&=\frac{2}{3}\left(1-\left(\frac{D}{2}\right)2^{-k}\right)\left[G_{k,D}(4z), \theta(|D|z)\right]_e.\label{eq:gDfinal}
\end{align}
Plugging \eqref{eq:gDfinal} into \eqref{eq:prcalculation} gives the desired result.
\end{proof}

\section{Eisenstein Series}\label{sec:Eisenstein}
In this section, we define various Eisenstein series and show that $G_{k,4D}(z)$ \eqref{eq:G_k,4D} is an Eisenstein series for the cusp at infinity of level $4|D|$. 
We recall the theory of Eisenstein series as developed in Miyake \cite[\S7]{MiyakeMFbook}. Let $\chi$ and $\psi$ be Dirichlet characters mod $L$ and mod $M$, respectively. For $k\geq3$, we put
\begin{align}
    E_{k}(z;\chi,\psi)=\sideset{}{'}\sum\limits_{m,n\in\mathbb{Z}}\chi(m)\psi(n)(mz+n)^{-k}.
\end{align}
Here  $\sump$ is the summation over all pairs of integers $(m,n)$ except $(0,0)$. For a fundamental discriminant $D$, we write   $\chi_D(\cdot)=\legendre{D}{\cdot}$ and $L_D(k)=\sum_{n=1}^{\infty}\chi_D(n)n^{-k}$.
\begin{lemma}[{\cite[Theorem 7.1.3]{MiyakeMFbook}}] \label{lem:EisensteinFourierexpansion}
    Assume $k\geq3$. Let $\chi$ and $\psi$ be Dirichlet characters mod $L$ and mod $M$, respectively, satisfying $\chi(-1)\psi(-1)=(-1)^k$. Let $m_{\psi}$ be the conductor of $\psi$, and $\psi^0$ be the primitive character associated with $\psi$. Then 
    \begin{align}
        E_{k}(z;\chi,\psi)=C+A\sum_{n=1}^{\infty}a(n)e^{2\pi inz/M},
    \end{align}
    where 
    \begin{align}
        A&=2(-2\pi i)^kG(\psi^0)/M^{k}(k-1)!,\\
        C&=\begin{cases}2L_M(k,\psi) &\chi:{\rm the~principal~character},\\
            0 &{\rm otherwise},
        \end{cases}\\
        a(n)&=\sum_{0<c\mid n}\chi(n/c)c^{k-1}\sum_{0<d\mid(l,c)}d\mu(l/d)\psi^0(l/d)\overline{\psi^0}(c/d).
    \end{align}
    Here $l=M/m_{\psi}$, $\mu$ is the M\"obius function, $L_M(k,\psi)=\sum_{n=1}^{\infty}\psi(n)n^{-k}$ is the Dirichlet series, and $G(\psi^0)$ is the Gauss sum of $\psi^0$.
\end{lemma}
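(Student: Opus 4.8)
The statement is the classical Hecke computation of the Fourier expansion of an Eisenstein series, and the plan is to reduce it to the Lipschitz summation formula
\[
\sum_{t\in\mathbb{Z}}(w+t)^{-k}=\frac{(-2\pi i)^k}{(k-1)!}\sum_{d\geq 1}d^{k-1}e^{2\pi i dw}\qquad(k\geq 2,\ \im w>0),
\]
applied class-by-class in $n$ modulo $M$, followed by an evaluation of the resulting twisted Gauss sums. Throughout, the hypothesis $k\geq 3$ guarantees absolute convergence of $E_k(z;\chi,\psi)$, which justifies all the rearrangements below.

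First I would isolate the $m=0$ terms. They contribute $\chi(0)\sum_{n\neq 0}\psi(n)n^{-k}$, which vanishes unless $\chi$ is the principal character; when $\chi$ is principal, folding $n$ onto $-n$ and using the parity hypothesis $\chi(-1)\psi(-1)=(-1)^k$ gives exactly $C=2L_M(k,\psi)$. This is the routine part. Next, for the $m\neq 0$ terms I would fold $m<0$ onto $m>0$: replacing $(m,n)$ by $(-m,-n)$ multiplies the summand by $\chi(-1)\psi(-1)(-1)^{-k}=1$ under the hypothesis, so the $m\neq 0$ part equals $2\sum_{m\geq 1}\chi(m)\sum_{n\in\mathbb{Z}}\psi(n)(mz+n)^{-k}$. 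For fixed $m\geq 1$ I would then split $n=Mt+r$ with $r$ running over residues modulo $M$ and $t\in\mathbb{Z}$, pull out $M^{-k}$, and apply the Lipschitz formula with $w=(mz+r)/M$. This yields
\[
\sum_{n}\psi(n)(mz+n)^{-k}=\frac{(-2\pi i)^k}{M^k(k-1)!}\sum_{d\geq 1}d^{k-1}e^{2\pi i dmz/M}\,g(d),\qquad g(d):=\sum_{r\bmod M}\psi(r)e^{2\pi i dr/M}.
\]
Writing the Fourier index as $n=dm$ and comparing with the claimed leading factor $A=2(-2\pi i)^kG(\psi^0)/(M^k(k-1)!)$, the theorem reduces to the arithmetic identity $\sum_{0<d\mid n}\chi(n/d)\,d^{k-1}\,g(d)=G(\psi^0)\,a(n)$.

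The crux, and the step I expect to be the main obstacle, is the evaluation of the imprimitive Gauss sum $g(d)$ in terms of the primitive Gauss sum $G(\psi^0)$. Since $\psi$ is induced from the primitive character $\psi^0$ of conductor $m_\psi$, one must show
\[
g(d)=G(\psi^0)\sum_{0<e\mid(l,d)}e\,\mu(l/e)\,\psi^0(l/e)\,\overline{\psi^0}(d/e),\qquad l=M/m_\psi,
\]
which is the source of the M\"obius function, the factor $\psi^0(l/e)$, and the $\gcd$ condition in the definition of $a(n)$. I would prove this by writing $\psi=\psi^0$ on integers prime to $M$ and $0$ otherwise, detecting the coprimality condition $(r,M)=1$ by M\"obius inversion over common divisors, reducing each inner sum to a complete Gauss sum of $\psi^0$ against an additive character, and collapsing those via the separability relation $\sum_{r\bmod m_\psi}\psi^0(r)e^{2\pi i cr/m_\psi}=\overline{\psi^0}(c)\,G(\psi^0)$ valid for primitive $\psi^0$.

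Substituting this evaluation of $g(d)$ into the divisor sum, then relabeling the outer divisor as $c$ and the inner one as $d$, gives precisely the stated formula for $a(n)$ and completes the proof. The only genuine care required is the bookkeeping of the conductor $m_\psi$, the modulus $M$, and the index $l=M/m_\psi$ throughout the M\"obius reduction; the analytic content is entirely concentrated in the single application of the Lipschitz formula.
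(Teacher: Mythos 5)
The paper does not prove this lemma; it is quoted verbatim from Miyake (Theorem 7.1.3), whose own argument is exactly the one you outline: isolate the $m=0$ block, fold $m<0$ onto $m>0$ using the parity condition, apply the Lipschitz expansion of $\sum_t (w+t)^{-k}$ residue-class by residue-class, and evaluate the resulting imprimitive Gauss sum $\sum_{r \bmod M}\psi(r)e^{2\pi i dr/M}=G(\psi^0)\sum_{e\mid(l,d)}e\,\mu(l/e)\psi^0(l/e)\overline{\psi^0}(d/e)$ via M\"obius detection of $(r,M)=1$ and separability of $G(\psi^0)$ (Miyake's Lemma 3.1.3). Your proposal is correct and coincides with that standard proof, so there is nothing to add beyond the minor remark that the $m=0$ contribution $\chi(0)\sum_{n\neq 0}\psi(n)n^{-k}$ is literally nonzero only when $L=1$, which is how the ``principal character'' case is used in this paper.
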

\begin{example}Let $D$ be a fundamental discriminant and $\mathbf{1}$ be the principal character. Then
    \begin{align}
        E_{k}(z;\mathbf{1},\chi_D)=2L_{D}(k)+\frac{2(-2\pi i)^kG(\chi_D)}{(k-1)!|D|^k}\sum_{n=1}^{\infty}\left(\sum_{d\mid n}\legendre{D}{d}d^{k-1}\right)q^{2\pi i n z/|D|}.
    \end{align}
    \end{example}
\begin{example}If $D=D_1D_2$ is a product of relatively prime fundamental discriminants then 
\begin{align}
    E_{k}(z;\chi_{D_2},\chi_{D_1}):=C+\frac{2(-2\pi i)^kG(\chi_{D_1})}{|D_1|^k(k-1)!}\sum_{n=1}^{\infty}\left(\sum_{\substack{d_1,d_2>0\\ d_1d_2=n}}\legendre{D_1}{d_1}\legendre{D_2}{d_2}d_1^{k-1}\right)e^{2\pi inz/|D_1|},
\end{align}
where $C$ is zero unless $D_2=1$.
\end{example}
We shall compare our Eisenstein series $G_{k,D}(z)$ \eqref{eq:G_k,D} and $G_{k,D_1,D_2}(z)$, defined below in   \eqref{eq:gkd1d2def} \cite[p.~193]{Kohnen-Zagier1981} with the ones above given in Miyake \cite{MiyakeMFbook}. Comparing the Fourier coefficients of $G_{k,D}(z)$ and $E_{k}(z;\mathbf{1},\chi_D)$ gives
 \begin{align}
     G_{k,D}(z)=\frac{(k-1)!|D|^k}{2(-2\pi i)^kG(\chi_D)}E_{k}(|D|z,\mathbf{1},\chi_{D}).\label{eq:GkDandMiyakenotation}
 \end{align}
Recall that \cite[p.~193]{Kohnen-Zagier1981} for $D_1,D_2$ relatively prime fundamental discriminants with $(-1)^kD_1D_2>0$:
\begin{align}
    \label{eq:gkd1d2def}
    G_{k,D_1,D_2}(z)&=\sum_{n\geq0}\sigma_{k-1,D_1,D_2}(n)q^n,\\
    \sigma_{k-1,D_1,D_2}(n)&=\begin{cases}
        -L_{D_1}(1-k)L_{D_2}(0) &n=0,\\\sum\limits_{\substack{d_1,d_2>0\\ d_1d_2=n}}\legendre{D_1}{d_1}\legendre{D_2}{d_2}d_1^{k-1} &n>0.
    \end{cases}
\end{align}
where the constant term is zero unless $D_2=1$.  Hence by comparing the Fourier coefficients of $G_{k,D_1,D_2}(z)$ and $E_{k}(z;\chi_{D_2},\chi_{D_1})$, we get 
\begin{align}
    G_{k,D_1,D_2}(z)=\frac{|D_1|^k(k-1)!}{2(-2\pi i)^k G(\chi_{D_1})}E_{k}(|D_1|z;\chi_{D_2},\chi_{D_1}).\label{eq:GkD1D2prelim}
\end{align}
The following expression of $G_{k,D_1,D_2}(z)$ is useful for Lemma \ref{lem:FourierexpansionofGkD}.
\begin{lemma} Let $k\ge3$ and $D=D_1D_2$ be a product of  coprime fundamental discriminants. Then
\begin{align}\label{eq:GkD1D2}
    G_{k,D_1,D_2}(z)=\frac{|D_1|^k(k-1)!}{2(-2\pi i)^k G(\chi_{D_1})}\chi_{D_2}(|D_1|)\sideset{}{'}\sum_{\substack{m,n\in\mathbb{Z}\\D_1\mid m}}\frac{\chi_{D_2}(m)\chi_{D_1}(n)}{(mz+n)^k}.
\end{align}
\end{lemma}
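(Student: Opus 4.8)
The plan is to derive \eqref{eq:GkD1D2} directly from the already-established relation \eqref{eq:GkD1D2prelim}, which expresses $G_{k,D_1,D_2}(z)$ as a scalar multiple of $E_k(|D_1|z;\chi_{D_2},\chi_{D_1})$, by unfolding the defining lattice sum of $E_k$ and performing a single change of summation variable. Concretely, I would substitute the definition $E_k(z;\chi,\psi)=\sideset{}{'}\sum_{m,n}\chi(m)\psi(n)(mz+n)^{-k}$ evaluated at $|D_1|z$, giving $E_k(|D_1|z;\chi_{D_2},\chi_{D_1})=\sideset{}{'}\sum_{m,n}\chi_{D_2}(m)\chi_{D_1}(n)(|D_1|mz+n)^{-k}$, and then replace the summation index $m$ by $M=|D_1|m$. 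As $m$ runs over $\mathbb{Z}$, the new index $M$ runs over exactly the multiples of $|D_1|$, i.e.\ those $M$ with $D_1\mid M$, and the excluded term $(m,n)=(0,0)$ corresponds to $(M,n)=(0,0)$, so the primed sum is preserved. The denominator becomes $(Mz+n)^{-k}$, which already matches the target shape.

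The one genuine bookkeeping point is the character factor. Writing $\chi_{D_2}(M)=\chi_{D_2}(|D_1|m)=\chi_{D_2}(|D_1|)\chi_{D_2}(m)$ by multiplicativity, and using that $(D_1,D_2)=1$ forces $\gcd(|D_1|,|D_2|)=1$, so that $\chi_{D_2}(|D_1|)=\pm1$ is its own inverse, I obtain $\chi_{D_2}(m)=\chi_{D_2}(|D_1|)\chi_{D_2}(M)$; this identity also holds trivially when either side vanishes. Pulling the constant $\chi_{D_2}(|D_1|)$ out of the sum turns $E_k(|D_1|z;\chi_{D_2},\chi_{D_1})$ into $\chi_{D_2}(|D_1|)\sideset{}{'}\sum_{D_1\mid M}\chi_{D_2}(M)\chi_{D_1}(n)(Mz+n)^{-k}$, and multiplying through by the scalar from \eqref{eq:GkD1D2prelim} yields precisely \eqref{eq:GkD1D2}.

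There is essentially no hard step here; the only things to be careful about are (i) justifying the reindexing, which is legitimate because the hypothesis $k\ge3$ makes the double series absolutely convergent and hence unconditionally rearrangeable, and (ii) confirming the placement of $\chi_{D_2}(|D_1|)$, which is handled by the inverse-equals-itself observation above. I would present the computation as a short chain of substitutions, accompanied by a one-sentence remark on absolute convergence to license the rearrangement.
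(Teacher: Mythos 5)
Your proposal is correct and follows essentially the same route as the paper: the paper's proof likewise unfolds $E_k(|D_1|z;\chi_{D_2},\chi_{D_1})$, reindexes $m\mapsto |D_1|m$ to restrict the sum to $D_1\mid m$ while extracting the factor $\chi_{D_2}(|D_1|)$, and then invokes \eqref{eq:GkD1D2prelim}. Your additional remarks on absolute convergence and on $\chi_{D_2}(|D_1|)$ being its own inverse are correct elaborations of steps the paper leaves implicit.
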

\begin{proof}
  Note that 
  \begin{align}
      E_{k}(|D_1|z;\chi_{D_2},\chi_{D_1})&=\sideset{}{'}\sum\limits_{m,n\in\mathbb{Z}}\chi_{D_2}(m)\chi_{D_1}(n)(m|D_1|z+n)^{-k}\\&=\chi_{D_2}(|D_1|)\sideset{}{'}\sum_{\substack{\ell,n\in\mathbb{Z}\\D_1\mid\ell}}\chi_{D_2}(\ell)\chi_{D_1}(n)(\ell z+n)^{-k}.
  \end{align}
Thus the result follows from \eqref{eq:GkD1D2prelim}.
\end{proof}
Let $k\ge3$ and $\chi$ be a Dirichlet character mod $N$. We define the Eisenstein series for the cusp at infinity \cite[p.~272]{MiyakeMFbook} as 
\begin{align}
    E_{k,N}^{\ast}(z;\chi)=\sum_{\begin{bsmallmatrix}a&b\\c&d\end{bsmallmatrix}\in\Gamma_{\infty}\backslash\Gamma_0(N)}\frac{\chi(d)}{(cz+d)^k},
\end{align}
where $\Gamma_{\infty}=\{\pm\begin{bsmallmatrix}
    1 & n \\ 0 &1
\end{bsmallmatrix}:n\in\mathbb{Z}\}$. 

Now, we are ready to prove that $G_{k,4D}$ is an Eisenstein series for the cusp at infinity of level $4|D|$.
\begin{lemma}\label{lem:defEisensteininfity}Let $\mathbf{1}$ denote the principal Dirichlet character. Then 
\begin{align}
    2L_{N}(k,\chi)E_{k,N}^{\ast}(z;\chi)=E_{k}(Nz;\mathbf{1},\chi).
\end{align}
\end{lemma}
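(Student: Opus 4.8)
The plan is to unfold both sides as sums over pairs of integers and match them term by term, with the Dirichlet $L$-series appearing as the common factor extracted from the greatest common divisor of each pair. Since $k\ge 3$ both series converge absolutely, so rearrangement is permitted throughout. First I would rewrite the right-hand side. Taking $\chi'=\mathbf 1$ and $\psi=\chi$ in the definition of $E_k$, using $\mathbf 1(m)=1$ for every integer $m$, and substituting $c=mN$ (so that $c$ runs bijectively over the multiples of $N$ as $m$ runs over $\mathbb Z$), one gets
\begin{align}
E_k(Nz;\mathbf 1,\chi)=\sideset{}{'}\sum_{\substack{c,n\in\mathbb Z\\ N\mid c}}\frac{\chi(n)}{(cz+n)^k},
\end{align}
where the prime omits $(c,n)=(0,0)$.

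Next I would factor out the greatest common divisor. Writing each nonzero pair as $(c,n)=g\,(c',n')$ with $g=\gcd(c,n)>0$ and $(c',n')$ primitive, complete multiplicativity of $\chi$ gives $\chi(n)=\chi(g)\chi(n')$, while $(cz+n)^{-k}=g^{-k}(c'z+n')^{-k}$. The only terms that survive have $\chi(g)\neq0$, i.e.\ $\gcd(g,N)=1$, and for those the condition $N\mid c=gc'$ is equivalent to $N\mid c'$; terms with $\gcd(g,N)>1$ vanish because $\chi(g)=0$. Hence the sum separates as
\begin{align}
E_k(Nz;\mathbf 1,\chi)=\Bigl(\sum_{g\ge1}\frac{\chi(g)}{g^{k}}\Bigr)\sum_{\substack{(c',n')\ \mathrm{primitive}\\ N\mid c'}}\frac{\chi(n')}{(c'z+n')^k}=L_N(k,\chi)\sum_{\substack{(c',n')\ \mathrm{primitive}\\ N\mid c'}}\frac{\chi(n')}{(c'z+n')^k},
\end{align}
using $\sum_{g\ge1}\chi(g)g^{-k}=L_N(k,\chi)$.

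Finally I would identify the primitive-pair sum with $2E_{k,N}^{\ast}(z;\chi)$. The assignment $\begin{bsmallmatrix}a&b\\c&d\end{bsmallmatrix}\mapsto(c,d)$ sets up a bijection between $\Gamma_\infty\backslash\Gamma_0(N)$ and the set of primitive pairs $(c,d)$ with $N\mid c$ taken up to overall sign, the sign ambiguity being exactly the $\pm I$ in $\Gamma_\infty$ (existence of a completing matrix uses $\gcd(c,d)=1$, and $N\mid c$ keeps it in $\Gamma_0(N)$). Because $\chi(-1)=(-1)^k$—the parity condition under which $E_{k,N}^{\ast}$ is well defined and nonzero—together with $(-1)^{-k}=(-1)^k$, the summand $\chi(d)(cz+d)^{-k}$ is invariant under $(c,d)\mapsto(-c,-d)$; thus the sum over all primitive pairs counts each coset twice and equals $2E_{k,N}^{\ast}(z;\chi)$. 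Substituting this into the previous display gives $E_k(Nz;\mathbf 1,\chi)=2L_N(k,\chi)E_{k,N}^{\ast}(z;\chi)$.

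The gcd bookkeeping and the absolute-convergence justification are routine; the step I expect to be the main obstacle is the coprimality argument $\gcd(g,N)=1\Rightarrow(N\mid gc'\Leftrightarrow N\mid c')$, which is what legitimizes pulling out the complete $L$-series $L_N(k,\chi)$ rather than a partial sum, together with the careful $\pm$-sign and parity bookkeeping that supplies the factor $2$ in the final identification.
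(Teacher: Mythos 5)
Your proof is correct and is essentially the paper's own argument run in the opposite direction: the paper starts from $2L_N(k,\chi)E^{\ast}_{k,N}(z;\chi)$, uses the same coset--primitive-pair bijection and the same $\pm$-sign/parity observation for the factor $2$, and multiplies the $L$-series into the primitive sum to recover the full lattice sum, whereas you factor the gcd out of the full sum. The decomposition, the key bijection, and the convergence justification are identical, so this is the same proof.
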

\begin{proof}
    Recall the following bijection \cite[Lemma 7.1.6]{MiyakeMFbook},
    \begin{equation*}
        \Gamma_\infty \backslash \Gamma_0(N) \rightarrow \{(c,d)\in \BZ^2:N\mid c,(c,d)=1,d>0\}:\Gamma_{\infty}\begin{bsmallmatrix}
            a &b\\c&d
        \end{bsmallmatrix}\mapsto (c,d),
    \end{equation*}
    which allows us to rewrite $E^*_{k,N}(z;\chi)$ as
    \begin{equation*}
        E^*_{k,N}(z;\chi)=\sum_{\substack{(c,d)=1\\d>0,N\mid c}} \frac{\chi(d)}{(cz+d)^k}=\frac{1}{2}\sum_{\substack{(c,d)=1\\ N\mid c}} \frac{\chi(d)}{(cz+d)^k}.
    \end{equation*}
    It follows that
    \begin{align}
        2L_N(k,\chi)E^*_{k,N}(z;\chi)&=2\sum_{n=1}^\infty \frac{\chi(n)}{n^k}\left(\frac{1}{2}\sum_{\substack{(c,d)=1\\ N\mid c}} \frac{\chi(d)}{(cz+d)^k}\right)\\
        &=\sum_{n=1}^\infty \sum_{\substack{(c,d)=1\\ N\mid c}} \frac{\chi(nd)}{(ncz+nd)^k} \\
        &=\sideset{}{'}\sum_{\substack{c,d\in\mathbb{Z}\\N\mid c}}\frac{\chi(d)}{(cz+d)^{k}} 
        \\&=\sideset{}{'}\sum_{c,d\in \ZZ} \frac{\chi(d)}{(Ncz+d)^k}
        \\&=E_k(Nz;\mathbf{1},\chi),
    \end{align}
    as desired. 
\end{proof}
From \eqref{eq:GkDandMiyakenotation} and Lemma \ref{lem:defEisensteininfity},  we know that $G_{k,D}(z)$ is an Eisenstein series at infinity. Comparing the constant terms of the Fourier expansions of $G_{k,D}(z)$ and $E_{k,|D|}^{\ast}(z;\chi_D)$ gives
\begin{align}
    G_{k,D}(z)=\frac{L_D(1-k)}{2}E_{k,|D|}^{\ast}(z;\chi_D).\label{eq:GkDandcuspinfty}
\end{align}
Note also that \eqref{eq:GkDandMiyakenotation} and the proof of Lemma 4.5 imply that
\begin{align}\label{eq:GkDforcomputingFourier}
    G_{k,D}(z)=\frac{(k-1)!|D|^k}{2(-2\pi i)^kG(\chi_D)}\sideset{}{'}\sum_{\substack{c,d\in\ZZ\\D\mid c}}\frac{\chi_D(d)}{(cz+d)^k}.
\end{align}
In fact, equation \eqref{eq:GkDforcomputingFourier} will be more convenient for us to compute the Fourier expansion of $G_{k,D}(z)$ at different cusps. We need the following lemma; see also \cite[p.~271]{GZ-86}.
\begin{lemma}\label{lem:Gk4DisEisensteinatinfty}
    Let $L_D^{(4)}(k)=\sum\limits_{\substack{(n,4)=1\\n\geq1}}\chi_D(n)n^{-k}$. Then
    \begin{align}
        E_{k,4|D|}^{\ast}(z;\chi_D)=\frac{L_D(k)}{L_{D}^{(4)}(k)}\left(E_{k,|D|}^{\ast}(4z;\chi_D)-2^{-k}\legendre{D}{2}E_{k,|D|}^{\ast}(2z;\chi_D)\right).
    \end{align}
\end{lemma}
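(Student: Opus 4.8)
The plan is to reduce the claimed relation between the two Eisenstein series at infinity to a transparent identity among the full Eisenstein series $E_k(\,\cdot\,;\mathbf{1},\chi_D)$, which can then be verified by splitting a lattice sum according to the parity of the summation index. First I would apply Lemma \ref{lem:defEisensteininfity} to each of the three terms. For the left-hand side, viewing $\chi_D$ as a character modulo $4|D|$ gives $2L_{4|D|}(k,\chi_D)E_{k,4|D|}^{\ast}(z;\chi_D)=E_k(4|D|z;\mathbf{1},\chi_D)$; since $D$ is odd, $\gcd(n,4|D|)=1$ is equivalent to $n$ being odd and coprime to $D$, so $L_{4|D|}(k,\chi_D)=L_D^{(4)}(k)$. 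For the two terms on the right I would instead view $\chi_D$ as the primitive character modulo $|D|$, so that $2L_{|D|}(k,\chi_D)E_{k,|D|}^{\ast}(z;\chi_D)=E_k(|D|z;\mathbf{1},\chi_D)$ with $L_{|D|}(k,\chi_D)=L_D(k)$, and then substitute $z\mapsto 4z$ and $z\mapsto 2z$.

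After these substitutions, cancelling the common factor $\tfrac{1}{2L_D^{(4)}(k)}$ (using $\tfrac{L_D(k)}{L_D^{(4)}(k)}\cdot\tfrac{1}{2L_D(k)}=\tfrac{1}{2L_D^{(4)}(k)}$) reduces the lemma to the single identity
\[
E_k\bigl(4|D|z;\mathbf{1},\chi_D^{(4|D|)}\bigr)=E_k\bigl(4|D|z;\mathbf{1},\chi_D^{(|D|)}\bigr)-2^{-k}\legendre{D}{2}E_k\bigl(2|D|z;\mathbf{1},\chi_D^{(|D|)}\bigr),
\]
where I have temporarily recorded the modulus of the character in the superscript to emphasize that the left-hand character is imprimitive (it vanishes on even integers) while the two on the right are primitive.

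To prove this I would expand all three terms via the defining series $E_k(w;\mathbf{1},\psi)=\sideset{}{'}\sum_{m,n}\psi(n)(mw+n)^{-k}$, which converges absolutely for $k\ge 3$, so that all rearrangements are legitimate. The left-hand side is precisely the part of the first right-hand term with $n$ odd, because $\chi_D^{(4|D|)}(n)=\legendre{D}{n}$ for odd $n$ and $0$ for even $n$. In the first right-hand term I would then split the sum over $n$ into its odd and even parts; reindexing the even part by $n=2n'$ and using $\legendre{D}{2n'}=\legendre{D}{2}\legendre{D}{n'}$ together with $(4|D|mz+2n')^{-k}=2^{-k}(2|D|mz+n')^{-k}$ turns it into exactly $2^{-k}\legendre{D}{2}E_k(2|D|z;\mathbf{1},\chi_D)$. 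Subtracting this even part leaves the odd part, i.e. the left-hand side, which is the asserted identity.

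The main obstacle is bookkeeping rather than anything conceptual: one must keep careful track of the modulus with respect to which $\chi_D$ is regarded at each stage, since it is precisely the distinction between the imprimitive character modulo $4|D|$ (which kills even $n$) and the primitive character modulo $|D|$ that produces the corrective $E_k(2|D|z;\,\cdot\,)$ term. The accompanying $L$-value identifications $L_{4|D|}(k,\chi_D)=L_D^{(4)}(k)$ and $L_{|D|}(k,\chi_D)=L_D(k)$ must be made consistently with these choices, and the whole reduction relies on absolute convergence for $k\ge 3$ to justify the parity splitting and reindexing.
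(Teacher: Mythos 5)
Your proof is correct and takes essentially the same route as the paper's: both unfold the series at infinity into full lattice sums (you by invoking Lemma \ref{lem:defEisensteininfity} three times, the paper by repeating that unfolding inline) and then isolate the coprimality condition at $2$. Your odd/even split of the $n$-sum is exactly the paper's M\"obius sum over $e\mid 4$, where $\mu(4)=0$ leaves only the terms $e=1,2$, so the two arguments differ only in packaging.
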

\begin{proof}
    Observe that
    \begin{align}
        2L_D^{(4)}(k)E_{k,4|D|}^{\ast}(z; \chi_D)
        &=2\sum_{\substack{n\ge 1\\(4,n)=1}}\frac{\chi_D(n)}{n^k}\left(\frac{1}{2}\sum_{\substack{(c,d)=1\\4D\mid c}}\frac{\chi_D(d)}{(cz+d)^{k}}\right)\\
        &=\sum_{\substack{n\ge 1\\(4,n)=1}} \sum_{\substack{(c,d)=1\\4D\mid c}}\frac{\chi_D(nd)}{(ncz+nd)^k}\\ &=\sum_{\substack{(d',4D)=1\\4D\mid c'}}\frac{\chi_D(d')}{(c'z+d')^k},
\end{align}
where $nc=c'$ and $nd=d'$. Note that we can replace $(d',4D)=1$ by $(d',4)$ since $\chi_D(d')=0$ otherwise. It follows that
\begin{align}
      2L_D^{(4)}(k)E_{k,4|D|}^{\ast}(z; \chi_D)
       &=\sum_{\substack{(d, 4)=1\\4D\mid c}} \frac{\chi_D(d)}{(cz+d)^k}\\
        &=\sideset{}{'}\sum_{\substack{c,d\in\mathbb{Z}\\4D|c}} \left(\sum_{e|(d,4),e>0}\mu(e)\right)\frac{\chi_D(d)}{(cz+d)^k}\\
        &=\sum_{e\mid4,e>0}\mu(e)\sideset{}{'}\sum_{\substack{c,d\in \mathbb{Z}\\4D\mid c, e\mid d}}\frac{\chi_D(d)}{(cz+d)^k},
       \end{align} 
where we used $\sum\limits_{e\mid(d,4)}\mu(e)=0$ for $(d,4)>1$ in the second equality. Substituting $d=ey$ and $c=4x$,
\newline
\begin{align} 2L_D^{(4)}(k)E_{k,4|D|}^{\ast}(z; \chi_D)&=\sum_{e\mid 4,e>0}\mu(e) \sideset{}{'}\sum_{\substack{x,y\in\mathbb{Z}\\D\mid x}}\frac{\chi_D(ey)}{(4xz+ey)^k}\\&=\sum_{e\mid 4,e>0}\mu(e)e^{-k}\chi_D(e) \sideset{}{'}\sum_{\substack{x,y\in\mathbb{Z}\\D|x}}\frac{\chi_D(y)}{(x4z/e+y)^k}
\\ &=\sum_{e\mid 4,e>0}\mu(e)e^{-k}\chi_D(e)2L_D(k)E_{k,D}^{\ast}(4z/e,\chi_D)\\&=2L_D(k)\left(E_{k,|D|}^{\ast}(4z;\chi_D)-2^{-k}\legendre{D}{2}E_{k,|D|}^{\ast}(2z;\chi_D)\right),
    \end{align}
    where the second to last equality is from the proof of Lemma \ref{lem:defEisensteininfity}.
\end{proof}
From Lemma \ref{lem:Gk4DisEisensteinatinfty} and \eqref{eq:GkDandcuspinfty}, we know that $G_{k,4D}(z)$ is an Eisenstein series for the cusp at infinity in $M_{k}(4|D|,\chi_D)$. Comparing the constants of $G_{k,4D}(z)$ and $E_{k,4D}^{\ast}(z;\chi_D)$, we have
\begin{align}
    G_{k,4D}(z)=\frac{L_{D}(1-k)}{2}\left(1-2^{-k}\left(\frac{D}{2}\right)\right)E_{k,4D}^{\ast}(z;\chi_D).\label{eq:Gk4Dandcuspinfty}
\end{align}

\section{The Rankin-Selberg convolution}\label{sec:rankin}
The purpose of this section is to prove Propositions \ref{prop:FdkeInnerProduct} and \ref{prop:RankinSelberghalfintegral}. For two elements $f$ and $g$ of $M_k(N)$ such that $fg$ is a cuspform, the Petersson inner product is given by 
\begin{align}
    \langle f,g\rangle_{\Gamma_0(N)}=\int_{\Gamma_{0}(N)\backslash\mathbb{H}}f(z)\overline{g(z)}\im(z)^kd\mu,\label{eq:defofPeterssoninnerproduct}
\end{align}
where $z=x+iy$ and $d\mu=dxdy/y^2$. We use $\langle\cdot,\cdot\rangle$ to denote $\langle\cdot,\cdot\rangle_{\Gamma_0(N)}$ if the level is clear from the context. For $f(z)=\sum_{n\geq1}a_f(n)q^n\in S_{k}(N,\chi)$, we put $f_{\rho}(z):=\sum_{n\geq 1}\overline{a_f(n)}q^n$. Note that $f_{\rho}(z)=f(z)$ if $f$ is a newform.

We now review the classical result on the Rankin-Selberg convolution, which was reformulated and generalized in Zagier \cite{Zagier1976}, keeping in mind the difference between our definition of the Rankin-Cohen bracket and the one used therein.
\begin{lemma}[{\cite[Propsition 6]{Zagier1976}\label{lem:RankinSelbergZagier}}]
Let $k_1$ and $k_2$ be real numbers with $k_2\geq k_1+2>2$. Let $f(z)=\sum_{n=1}^{\infty}a(n)q^n$ and $g(z)=\sum_{n=0}^{\infty}b(n)q^n$ be modular forms in $S_k(N,\chi)$ and $M_{k_1}(N,\chi_1)$, where $k=k_1+k_2+2e, e\geq0$ and $\chi=\chi_1\chi_2$. 
Then
    \begin{align}
        \langle f,[g,E^*_{k_2,N}(\cdot;\chi_2)]_e\rangle=\frac{(-1)^e}{e!}\frac{\Gamma(k-1)\Gamma(k_2+e)}{(4\pi)^{k-1}\Gamma(k_2)}\sum_{n=1}^{\infty}\frac{a(n)\overline{b(n)}}{n^{k_1+k_2+e-1}}.\label{eq:RankinSelberg}
    \end{align}
\end{lemma}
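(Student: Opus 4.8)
The plan is to prove \eqref{eq:RankinSelberg} by the classical Rankin--Selberg unfolding combined with Zagier's holomorphic projection, which is precisely the method behind \cite[Proposition 6]{Zagier1976}. Since the statement is literally Zagier's, the only genuinely new work is to reconcile the normalization of the Rankin--Cohen bracket in Definition \ref{def:rankincohen} with the one used in \cite{Zagier1976}, recorded in that definition by the relation $F_e^{(a,b)}(f,g)=(-2\pi i)^e e!\,[f,g]_e$. I would therefore organize the argument around two standard ingredients and then track a single overall constant.

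First I would realize the bracket as a holomorphic projection. Let $\delta_w=\frac{1}{2\pi i}\left(\frac{\partial}{\partial z}+\frac{w}{2iy}\right)$ be the Maass--Shimura raising operator, which sends weight $w$ to weight $w+2$, and note that $\delta_{k_2}^{e}E^*_{k_2,N}(\cdot;\chi_2)$ is a nearly holomorphic modular form of weight $k_2+2e$. Consequently $g(z)\,\delta_{k_2}^{e}E^*_{k_2,N}(z;\chi_2)$ is a nearly holomorphic form of weight $k=k_1+k_2+2e$ with moderate growth. The key structural fact (the holomorphic-projection description of Rankin--Cohen brackets) is that
\[
\pi_{\mathrm{hol}}\!\left(g\cdot\delta_{k_2}^{e}E^*_{k_2,N}(\cdot;\chi_2)\right)=c_{e,k_1,k_2}\,[g,E^*_{k_2,N}(\cdot;\chi_2)]_e
\]
for an explicit constant $c_{e,k_1,k_2}$. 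Since $f$ is a holomorphic cusp form, the projection property $\langle f,\Phi\rangle=\langle f,\pi_{\mathrm{hol}}\Phi\rangle$ holds for every moderate-growth $\Phi$ of weight $k$, so
\[
\langle f,[g,E^*_{k_2,N}]_e\rangle=c_{e,k_1,k_2}^{-1}\,\langle f,\,g\cdot\delta_{k_2}^{e}E^*_{k_2,N}\rangle.
\]

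Next I would run the unfolding on the right-hand side. Writing $\langle f,\,g\,\delta_{k_2}^{e}E^*_{k_2,N}\rangle=\int_{\Gamma_0(N)\backslash\mathbb{H}}f(z)\overline{g(z)}\,\overline{\delta_{k_2}^{e}E^*_{k_2,N}(z;\chi_2)}\,\im(z)^{k}\,d\mu$ and using that $f(z)\overline{g(z)}\,\im(z)^{k}$ is $\Gamma_0(N)$-invariant, I would expand $\delta_{k_2}^{e}E^*_{k_2,N}$ term by term over $\Gamma_\infty\backslash\Gamma_0(N)$ and collapse the fundamental domain to the strip $\Gamma_\infty\backslash\mathbb{H}$. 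The inner integral $\int_0^1 f(x+iy)\overline{g(x+iy)}\,dx=\sum_{n\ge1}a(n)\overline{b(n)}e^{-4\pi ny}$ produces the diagonal Dirichlet series, while the polynomial in $1/y$ generated by $\delta_{k_2}^{e}$ turns the remaining $y$-integral into a Mellin transform; evaluating $\int_0^\infty e^{-4\pi ny}y^{k-2}\,dy=\Gamma(k-1)/(4\pi n)^{k-1}$ and summing the contributions of the raising operator yields the Gamma factor $\Gamma(k-1)\Gamma(k_2+e)/\Gamma(k_2)$ and pins the series to the argument $k_1+k_2+e-1$. The hypothesis $k_2\ge k_1+2>2$ guarantees that $E^*_{k_2,N}$ converges absolutely and that the Dirichlet series converges at this point, so no analytic continuation is needed.

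I expect the main obstacle to be bookkeeping rather than conceptual: carrying the constant $\frac{(-1)^e}{e!}\frac{\Gamma(k-1)\Gamma(k_2+e)}{(4\pi)^{k-1}\Gamma(k_2)}$ correctly through both the projection constant $c_{e,k_1,k_2}$ and the Mellin evaluation of the $\delta_{k_2}^{e}$-polynomial, and then matching the $(-2\pi i)^e e!$ normalization so as to land exactly on \eqref{eq:RankinSelberg} rather than on Zagier's $F_e^{(a,b)}$-normalized identity. In the write-up I would either cite \cite[Proposition 6]{Zagier1976} directly and perform only the conversion $F_e^{(a,b)}=(-2\pi i)^e e!\,[\cdot,\cdot]_e$ to extract the stated constant, or reproduce the short unfolding above, checking the sign $(-1)^e$ and the factor $1/e!$ against the explicit coefficients in \eqref{eq:defofrankin-cohenbracket}.
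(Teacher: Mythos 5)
The paper gives no proof of this lemma at all: it is stated as a direct quotation of Zagier's Proposition 6, with only the surrounding remark that one must account for the normalization $F_e^{(a,b)}(f,g)=(-2\pi i)^e e!\,[f,g]_e$ relating Definition \ref{def:rankincohen} to Zagier's bracket. Your fallback option (cite Zagier and convert normalizations) is therefore exactly what the paper does, and your primary plan --- holomorphic projection of $g\cdot\delta_{k_2}^{(e)}E^*_{k_2,N}$ followed by Rankin--Selberg unfolding --- is a correct outline of the standard proof of the cited result, with the only real work being the constant-tracking you already flag.
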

To obtain Proposition \ref{prop:FdkeInnerProduct}, we need to deal with the case $k_1=k_2$, which can be done by following Shimura \cite{Shimura1976} and Lanphier's work \cite{ShimuraoperatorLanphier}. For $f(z)=\sum_{n=1}^{\infty}a(n)q^n\in S_{k}(N,\chi)$ and $g(z)=\sum_{n=0}^{\infty}b(n)q^n\in M_{\ell}(N,\psi)$, we put
\[D(s,f,g)=\sum_{n=1}^{\infty}a(n)b(n)n^{-s},\quad \re(s)\gg0.\]
We are particularly interested in the following case. 
\begin{lemma}\label{lem:defofD}
    Let $f=\sum_{n=1}^\infty a(n)q^n\in S_{2\ell}(1)$ be a normalized eigenform with $\ell=k+2e, e>0$ and $k\geq4$ integers, and let $D$ be an odd fundamental discriminant. Then
    \begin{align}
        D(s,f,G_{k,D})=\frac{L(f,s)L(f,D,s-k+1))}{L_{D}(2s-3k-4e+2)},\quad \re(s)\gg0.
    \end{align}
\end{lemma}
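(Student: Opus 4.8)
The plan is to evaluate the Dirichlet series $D(s,f,G_{k,D})=\sum_{n\ge 1}a(n)\sigma_{k-1,D}(n)\,n^{-s}$ directly as an Euler product for $\re(s)\gg 0$. First I would observe that the constant term $L_D(1-k)/2$ of $G_{k,D}$ never contributes: since $f$ is a cuspform, $a(0)=0$, so only the coefficients $\sigma_{k-1,D}(n)=\sum_{d\mid n}\chi_D(d)d^{k-1}$ with $n\ge 1$ enter. As $f$ is a normalized Hecke eigenform of level one, $a(n)$ is multiplicative with local generating function $\sum_{m\ge 0}a(p^m)T^m=(1-\alpha_p T)^{-1}(1-\beta_p T)^{-1}$, where $\alpha_p+\beta_p=a(p)$ and $\alpha_p\beta_p=p^{2\ell-1}$; likewise $\sigma_{k-1,D}$ is multiplicative, with $\sum_{m\ge 0}\sigma_{k-1,D}(p^m)T^m=(1-T)^{-1}\bigl(1-\chi_D(p)p^{k-1}T\bigr)^{-1}$. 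Hence $n\mapsto a(n)\sigma_{k-1,D}(n)$ is multiplicative, and $D(s,f,G_{k,D})$ factors into local terms with $T=p^{-s}$.

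The core of the argument is the classical Rankin convolution identity for two degree-two Euler factors: if $\sum_m a_mT^m=\prod_{i=1}^2(1-\alpha_iT)^{-1}$ and $\sum_m b_mT^m=\prod_{j=1}^2(1-\beta_jT)^{-1}$, then
\[
\sum_{m\ge 0}a_mb_mT^m=\frac{1-\alpha_1\alpha_2\beta_1\beta_2\,T^2}{\prod_{i,j}(1-\alpha_i\beta_jT)},
\]
which I would justify by partial fractions, expanding each $a_m,b_m$ as a combination of the $\alpha_i^m,\beta_j^m$, multiplying, and resumming the resulting geometric series. Applying this with $(\alpha_1,\alpha_2)=(\alpha_p,\beta_p)$ and $(\beta_1,\beta_2)=(1,\chi_D(p)p^{k-1})$, the four denominator factors split into two pairs. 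The pair $(1-\alpha_pT)(1-\beta_pT)=1-a(p)p^{-s}+p^{2\ell-1}p^{-2s}$ is the inverse local Euler factor of $L(f,s)$; setting $T'=\chi_D(p)p^{k-1}T$, the pair $(1-\alpha_pT')(1-\beta_pT')=1-\chi_D(p)a(p)p^{k-1-s}+\chi_D(p)^2p^{2\ell-1}p^{2(k-1)}p^{-2s}$ is exactly the inverse local Euler factor of $L(f,D,s-k+1)$. The numerator equals $1-\alpha_p\beta_p\cdot\chi_D(p)p^{k-1}\,p^{-2s}=1-\chi_D(p)p^{2\ell+k-2}p^{-2s}$.

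Assembling the Euler product then gives
\[
D(s,f,G_{k,D})=L(f,s)\,L(f,D,s-k+1)\prod_p\bigl(1-\chi_D(p)p^{2\ell+k-2-2s}\bigr)=\frac{L(f,s)\,L(f,D,s-k+1)}{L_D(2s-2\ell-k+2)},
\]
and substituting $2\ell=2k+4e$ turns the argument of $L_D$ into $2s-3k-4e+2$, which is the claim. The primes dividing $D$ need no separate treatment: there $\chi_D(p)=0$, so the local factor of $G_{k,D}$ degenerates to $(1-T)^{-1}$ and both the $L(f,D,\cdot)$ factor and the numerator trivialize consistently. I expect the only genuine obstacle to be the bookkeeping in verifying the Rankin identity and in matching the two paired denominator factors precisely against the standard Euler products of $L(f,s)$ and $L(f,D,s-k+1)$; absolute convergence for $\re(s)\gg 0$ is automatic and causes no trouble.
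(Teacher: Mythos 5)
Your proposal is correct and matches the paper's argument: the paper reduces the lemma to exactly this identity for $\sum_n a(n)\sigma_{k-1,\mathbf{1},\chi_D}(n)n^{-s}$ and then cites it as a standard computation, whereas you carry out that computation explicitly via the local Rankin convolution of the two degree-two Euler factors. Your bookkeeping checks out, including the matching of the shifted factor with the local factor of $L(f,D,s-k+1)$, the numerator giving $L_D(2s-2\ell-k+2)^{-1}$, and the degenerate behavior at primes dividing $D$.
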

\begin{proof}
    Note that for $\re(s)\gg0$, we have
\begin{align}D(s,f,G_{k,D})=\sum_{n=1}^{\infty}\frac{\sigma_{k-1,\mathbf{1},\chi_D}(n)a(n)}{n^s},
     \end{align}
     where $\sigma_{k-1,\mathbf{1},\chi_D}(n)=\sum_{d\mid n}\chi_{D}(d)d^{k-1}$. A standard computation (see \cite[Proposition 4.1]{Raumproducts}) gives 
     \begin{align}
         \sum_{n=1}^{\infty}\frac{\sigma_{k-1,\mathbf{1},\chi_D}(n)a(n)}{n^s}=\frac{L(f,s)L(f,D,s-(k-1))}{L_{D}(2s-(k-1)+1-(2k+4e))},
     \end{align}
     as desired.
\end{proof}
From Shimura \cite[p.~786-789]{Shimura1976}, $D(s,f,G_{k,D})$  has a meromorphic continuation to the whole complex plane and  $D(s,f,G_{k,D})$ is holomorphic at $s=2k+2e-1$, see \cite[p.~789]{Shimura1976}.

The Maass-Shimura operators \cite[p.~788 (2.8)]{Shimura1976} are defined by
\begin{align}
    \delta_{\lambda}&=\frac{1}{2\pi i}\left(\frac{\lambda}{2i y}+\frac{\partial}{\partial z}\right),\quad0<\lambda\in\mathbb{Z},\\
    \delta_{\lambda}^{(r)}&=\delta_{\lambda+2r-2}\cdots\delta_{\lambda+2}\delta_{\lambda},\quad 0\leq r\in\mathbb{Z},
\end{align}
where we understand that $\delta^{(0)}_{\lambda}$ is the identity operator. 
A relation between Maass-Shimura operators and the Rankin-Cohen bracket is given by
\begin{align}
    \delta_k^{(n)}f(z)\times g(z)=\sum_{j=0}^n\frac{(-1)^j\binom{n}{j}\binom{k+n-1}{n-j}}{\binom{k+\ell+2j-2}{j}\binom{k+\ell+n+j-1}{n-j}}\delta^{(n-j)}_{k+\ell+2j}[f,g]_j(z),\label{eq:ShimuraoperatorandRankin-Cohenbracket}
\end{align}
where $f\in M_{k}(\Gamma)$ and $g\in M_{\ell}(\Gamma)$ for any congruence subgroup $\Gamma$; see \cite[Theorem 1]{ShimuraoperatorLanphier}.

We recall the following two results.
\begin{lemma}[{\cite[Lemma 6]{Shimura1976}}]\label{lem:shimuralemma6}
   Suppose $f\in S_{k}(N,\chi), g\in M_{l}(N,\overline{\chi})$ and $k=l+2r$ with a positive integer $r.$ Then $\langle \delta^{(r)}g,f_{\rho}\rangle=0$. 
\end{lemma}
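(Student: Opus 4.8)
The plan is to realize $\delta^{(r)}g$ as the image of the weight-raising Maass--Shimura operator and to exploit its adjointness, under the Petersson product, with the weight-lowering operator, which annihilates the holomorphic form $f_\rho$. Set $\lambda=k-2$ and $h=\delta_\ell^{(r-1)}g$, a smooth form transforming with weight $\lambda$ and nebentypus $\overline{\chi}$; by the definition of $\delta_\lambda^{(r)}$ we have $\delta^{(r)}g=\delta_\lambda h$. It therefore suffices to prove $\langle \delta_\lambda h, f_\rho\rangle=0$, and a single integration by parts will already achieve this, without iterating over all $r$ factors.

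First I would record the identity
\[
\delta_\lambda h=\frac{1}{2\pi i}\,y^{-\lambda}\frac{\partial}{\partial z}\bigl(y^{\lambda}h\bigr),
\]
which follows from $\frac{\partial}{\partial z}y=\frac{1}{2i}$ together with the definition of $\delta_\lambda$. Substituting into \eqref{eq:defofPeterssoninnerproduct} and cancelling the weight factor $y^{\lambda+2}$, the factor $y^{-\lambda}$, and the measure $y^{-2}$ leaves
\[
\langle \delta_\lambda h, f_\rho\rangle=\frac{1}{2\pi i}\int_{\Gamma_0(N)\backslash\mathbb{H}}\frac{\partial}{\partial z}\bigl(y^{\lambda}h\bigr)\,\overline{f_\rho}\,dx\,dy.
\]
Next I would integrate by parts in $z$ via Stokes' theorem on the fundamental domain, transferring $\frac{\partial}{\partial z}$ from $y^\lambda h$ onto $\overline{f_\rho}$. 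Because $\frac{\partial}{\partial z}\overline{f_\rho}=\overline{\frac{\partial}{\partial \bar z}f_\rho}$ and $f_\rho(z)=\sum_{n\ge1}\overline{a_f(n)}q^n$ is holomorphic, we have $\frac{\partial}{\partial \bar z}f_\rho=0$, so the interior integral vanishes identically and only boundary contributions survive.

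The main obstacle is to show that these boundary contributions vanish, and this is exactly where the cuspidality of $f$ enters. Since $\delta_\lambda h$ and $f_\rho$ share weight $k=\lambda+2$ and nebentypus $\overline{\chi}$, the integrand $\frac{\partial}{\partial z}(y^\lambda h)\,\overline{f_\rho}$ is $\Gamma_0(N)$-invariant, so the contributions from $\Gamma_0(N)$-equivalent sides of the fundamental domain cancel in pairs. At each cusp, the rapid (exponential) decay of the cusp form $f_\rho$ dominates the at-worst moderate growth of $h$, so the product $y^\lambda h\,\overline{f_\rho}$ decays and the cuspidal boundary terms vanish as the truncation is removed. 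Combining these observations, the boundary integral is zero, whence $\langle\delta_\lambda h, f_\rho\rangle=0$. Equivalently, one may phrase the same computation as the adjoint relation $\langle \delta_\lambda \phi,\psi\rangle=-\frac{1}{2\pi i}\langle \phi, L\psi\rangle$ for the lowering operator $L=-2iy^2\frac{\partial}{\partial\bar z}$ and invoke $Lf_\rho=0$; the content is identical, and the only nontrivial input is the justification of the vanishing boundary terms above.
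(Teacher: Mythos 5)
The paper does not actually prove this lemma: it is imported verbatim as Lemma 6 of Shimura (1976), with only the citation, so there is no internal proof to compare against. Your argument is correct and is essentially the classical one (it is in substance Shimura's own proof): writing $\delta^{(r)}g=\delta_{k-2}h$ with $h=\delta_l^{(r-1)}g$, using the identity $2\pi i\,\delta_{\lambda}h=y^{-\lambda}\partial_z\bigl(y^{\lambda}h\bigr)$ with $\lambda=k-2$, observing that the weight factor $y^{k}$ in the Petersson integral cancels exactly against $y^{-\lambda}$ and the measure, and then applying Stokes so that the interior term dies because $\overline{f_\rho}$ is anti-holomorphic. The boundary analysis is also the right one: identified sides cancel and the cuspidal contributions vanish because the cusp form $f_\rho$ decays exponentially against the moderate growth of $h$. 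Two points deserve slightly more careful phrasing, though neither is a gap: (i) what is $\Gamma_0(N)$-invariant is the $1$-form $y^{k-2}h\,\overline{f_\rho}\,d\bar z$ (the object to which Stokes is applied), not the scalar integrand $\partial_z(y^{k-2}h)\overline{f_\rho}$ itself --- it is the invariance of the $1$-form that makes the paired side contributions cancel; and (ii) the cuspidal estimate requires knowing that each application of $\delta_\lambda$ preserves moderate growth at \emph{every} cusp of $\Gamma_0(N)$, not just at $\infty$, which holds since $\delta_\lambda$ only differentiates and multiplies by $y^{-1}$. With those remarks your proof is complete and matches the standard adjointness argument ($\delta_\lambda$ is, up to constants, adjoint to the lowering operator, which annihilates holomorphic forms).
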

\begin{lemma}[{\cite[Theorem 2]{Shimura1976}}] \label{lem:shimuraD}
Suppose $f\in S_{2\ell}(|D|)$ with $\ell=k+2e, e>0$ and $k\geq4$, and $D$ is a fundamental discriminant. Then
    \begin{align}D(2k+4e-1-2e),f,G_{k,D})=c\pi^{2k+4e-1}\langle G_{k,D}\delta_{k}^{(2e)}E_{k,|D|}^{\ast}(z;\chi_D),f_{\rho} \rangle,\end{align}
    where $\langle\cdot,\cdot\rangle$ denotes the non-normalized Petersson inner product on defined in \eqref{eq:defofPeterssoninnerproduct} and 
    \begin{align}
        c&=\frac{\Gamma(2k+4e-k-2(2e))}{\Gamma(2k+4e-1-2e)\Gamma(2k+4e-k-2e)}(-1)^{2e}4^{2k+4e-1}. \\
        &=\frac{\Gamma(k)}{\Gamma(2k-1+2e)\Gamma(k+2e)}4^{2k+4e-1}
    \end{align}
\end{lemma}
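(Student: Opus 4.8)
The identity is Shimura's \cite[Theorem 2]{Shimura1976} in the special case $g=G_{k,D}$ and $r=2e$, so the plan is to reproduce the Rankin--Selberg unfolding in the normalization of this paper and to chase the resulting $\Gamma$-factors until they collapse into the stated constant. First I would expand the holomorphic Eisenstein series through its defining coset sum $E_{k,|D|}^{\ast}(z;\chi_D)=\sum_{\gamma\in\Gamma_{\infty}\backslash\Gamma_0(|D|)}\chi_D(d_\gamma)(c_\gamma z+d_\gamma)^{-k}$ and use the weight-raising equivariance $\delta_m(h|_m\gamma)=(\delta_m h)|_{m+2}\gamma$ of the Maass--Shimura operator to pull $\delta_k^{(2e)}$ inside the sum. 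Applied to the weight-$k$ seed at the cusp, $\delta_k^{(2e)}$ returns an explicit polynomial of degree $2e$ in $1/\im(z)$; this is the step that turns the holomorphic Eisenstein series into the nearly holomorphic object and is exactly where the non-holomorphicity needed for the unfolding enters.

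Next I would substitute this expansion into the non-normalized Petersson integral \eqref{eq:defofPeterssoninnerproduct} and unfold: summing over $\Gamma_{\infty}\backslash\Gamma_0(|D|)$ replaces the integral over $\Gamma_0(|D|)\backslash\mathbb{H}$ by one over the strip $\Gamma_{\infty}\backslash\mathbb{H}$, leaving $G_{k,D}(z)\,\overline{f_\rho(z)}$ multiplied by a polynomial in $1/y$ coming from $\delta_k^{(2e)}$ and by $y^{2\ell}$ from the invariant measure. The inner integral over $x\in[0,1]$ selects the diagonal Fourier coefficients, and because $G_{k,D}=\sum_n\sigma_{k-1,\mathbf 1,\chi_D}(n)q^n$ while $\overline{f_\rho(z)}=\sum_n a(n)\overline{q}^n$, it produces $\sum_n\sigma_{k-1,\mathbf 1,\chi_D}(n)a(n)e^{-4\pi ny}$ times the same polynomial in $1/y$. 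Performing the $y$-integral through $\int_0^\infty y^{s-1}e^{-4\pi ny}\,dy=\Gamma(s)(4\pi n)^{-s}$ term by term yields $D(s,f,G_{k,D})$ with $s$ determined by the total power of $y$, which works out to $s=2k+4e-1-2e=2k+2e-1$, exactly the value on the left-hand side. Since this value lies outside the half-plane of absolute convergence of $D(\cdot,f,G_{k,D})$, I would run the unfolding first for a real-analytic Eisenstein family $E(z,s;\chi_D)$ with $\re(s)\gg0$, whose special value recovers $\delta_k^{(2e)}E_{k,|D|}^{\ast}$, and then pass to $s=2k+2e-1$ using the meromorphic continuation and holomorphy of $D(s,f,G_{k,D})$ recorded above together with Lemma \ref{lem:defofD}.

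The only genuinely delicate part, and hence the main obstacle, is the constant bookkeeping. I would need the explicit coefficients of the polynomial $\delta_k^{(2e)}1$, combine them with the Gamma factors emitted by the $y$-integral and the powers of $4\pi$, and verify that the combination telescopes into $c\pi^{2k+4e-1}$ with $c=\Gamma(k)\big/\big(\Gamma(2k+2e-1)\Gamma(k+2e)\big)\cdot 4^{2k+4e-1}$; the two displayed forms of $c$ agree since $2k+4e-k-2(2e)=k$, $2k+4e-1-2e=2k+2e-1$, $2k+4e-k-2e=k+2e$, and $(-1)^{2e}=1$. As this is precisely Shimura's computation, the most economical route in the paper is to invoke \cite[Theorem 2]{Shimura1976} directly and merely check, via the proportionality \eqref{eq:GkDandcuspinfty} between $G_{k,D}$ and $E_{k,|D|}^{\ast}(z;\chi_D)$, that his normalizing constant specializes to the $c$ above.
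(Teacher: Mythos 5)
Your proposal is correct and takes essentially the same route as the paper, which offers no proof of this lemma beyond the bracketed citation: the statement is exactly Shimura's Theorem 2 specialized to $g=G_{k,D}$ of weight $l=k$, $f$ of weight $2k+4e$, and $r=2e$, and your parameter check ($\mu-2r=k$, $k_{\mathrm{Sh}}-1-r=2k+2e-1$, $\mu-r=k+2e$, $(-1)^{2e}=1$) is the whole content of the specialization. Your Rankin--Selberg unfolding sketch --- including the observation that $s=2k+2e-1$ lies outside the region of absolute convergence, so one must unfold a real-analytic Eisenstein family and then continue --- is a correct outline of Shimura's own argument and goes beyond what the paper records.
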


We apply these two results in our situation to obtain the following.
\begin{proposition}\label{prop:RankinSelbergatcentralvalue}
    Let $f\in S_{2\ell}(1)$ be a normalized eigenform with $\ell=k+2e, e>0$ and $k\geq4$. Then
    \begin{align}
        \langle[G_{k,D},G_{k,D}]_{2e},f\rangle_{\Gamma_0(|D|)}=\frac{1}{2}\frac{\Gamma(2k+4e-1)\Gamma(k+2e)}{(2e)!(4\pi)^{2k+4e-1}\Gamma(k)}\frac{L_{D}(1-k)}{L_D(k)}L(f,2k+2e-1)L(f,D,k+2e).
    \end{align}
\end{proposition}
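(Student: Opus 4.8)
The plan is to realize the Petersson inner product as a special value of the Rankin–Selberg Dirichlet series $D(s,f,G_{k,D})$, which Lemma~\ref{lem:defofD} already evaluates in closed form, and then to use Shimura's integral representation (Lemma~\ref{lem:shimuraD}) together with the Lanphier identity \eqref{eq:ShimuraoperatorandRankin-Cohenbracket} to convert the Maass–Shimura derivative appearing there into the single Rankin–Cohen bracket $[G_{k,D},G_{k,D}]_{2e}$.

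First I would invoke Lemma~\ref{lem:shimuraD} at $s=2k+4e-1-2e=2k+2e-1$, which gives
\begin{align}
D(2k+2e-1,f,G_{k,D})=c\,\pi^{2k+4e-1}\langle G_{k,D}\,\delta_{k}^{(2e)}E_{k,|D|}^{\ast}(\cdot;\chi_D),f_{\rho}\rangle,
\end{align}
with $c=\Gamma(k)\,4^{2k+4e-1}/(\Gamma(2k+2e-1)\Gamma(k+2e))$. Using \eqref{eq:GkDandcuspinfty} to replace $E_{k,|D|}^{\ast}(z;\chi_D)$ by $\tfrac{2}{L_D(1-k)}G_{k,D}(z)$, the right-hand side becomes a multiple of $\langle G_{k,D}\,\delta_k^{(2e)}G_{k,D},f_\rho\rangle$. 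Now I apply \eqref{eq:ShimuraoperatorandRankin-Cohenbracket} with both weights equal to $k$ and $n=2e$, expanding $\delta_k^{(2e)}G_{k,D}\cdot G_{k,D}$ as a linear combination of the terms $\delta^{(2e-j)}_{2k+2j}[G_{k,D},G_{k,D}]_j$ for $0\le j\le 2e$.

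The decisive step is that every term with $j<2e$ is annihilated upon pairing against $f_\rho$: since $G_{k,D}\in M_k(|D|,\chi_D)$, the bracket $[G_{k,D},G_{k,D}]_j$ lies in $M_{2k+2j}(|D|,\chi_D^2)=M_{2k+2j}(|D|,\mathbf{1})$ because $\chi_D$ is real, so Lemma~\ref{lem:shimuralemma6} with $r=2e-j>0$ forces $\langle\delta^{(2e-j)}_{2k+2j}[G_{k,D},G_{k,D}]_j,f_\rho\rangle=0$. Only the $j=2e$ term survives, and there $\delta^{(0)}$ is the identity while the combinatorial coefficient collapses to $1/\binom{2k+4e-2}{2e}$. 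Using that level-one eigenforms have real Fourier coefficients, so $f_\rho=f$, this identifies $\langle G_{k,D}\,\delta_k^{(2e)}G_{k,D},f_\rho\rangle$ with $\binom{2k+4e-2}{2e}^{-1}\langle[G_{k,D},G_{k,D}]_{2e},f\rangle$.

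It then remains to solve for the inner product and to substitute the value of $D(2k+2e-1,f,G_{k,D})$ from Lemma~\ref{lem:defofD}, where one checks that the three arguments specialize to $L(f,2k+2e-1)$, $L(f,D,k+2e)$, and $L_D(k)$ in the denominator. The last task is purely bookkeeping: combining $c$, the factor $2/L_D(1-k)$, the binomial $\binom{2k+4e-2}{2e}=\Gamma(2k+4e-1)/((2e)!\,\Gamma(2k+2e-1))$, and $4^{2k+4e-1}\pi^{2k+4e-1}=(4\pi)^{2k+4e-1}$, after which the $\Gamma(2k+2e-1)$ factors cancel and the stated constant $\tfrac12\,\Gamma(2k+4e-1)\Gamma(k+2e)/((2e)!(4\pi)^{2k+4e-1}\Gamma(k))$ emerges. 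I expect the main obstacle to be this constant reconciliation—keeping the several $\Gamma$- and $(4\pi)$-power factors from Shimura's normalization consistent with the Rankin–Cohen normalization of Definition~\ref{def:rankincohen}—rather than any conceptual difficulty, since the annihilation argument via Lemma~\ref{lem:shimuralemma6} is clean.
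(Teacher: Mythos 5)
Your proposal follows essentially the same route as the paper's proof: Shimura's integral representation (Lemma~\ref{lem:shimuraD}) at $s=2k+2e-1$, the Lanphier identity \eqref{eq:ShimuraoperatorandRankin-Cohenbracket} combined with Lemma~\ref{lem:shimuralemma6} to kill all terms except the $j=2e$ bracket with coefficient $1/\binom{2k+4e-2}{2e}$, the proportionality $G_{k,D}=\tfrac{L_D(1-k)}{2}E^{\ast}_{k,|D|}(\cdot;\chi_D)$, and Lemma~\ref{lem:defofD} for the Dirichlet series. The annihilation argument you spell out (each $[G_{k,D},G_{k,D}]_j$ with $j<2e$ paired against $f_\rho$ vanishes by Lemma~\ref{lem:shimuralemma6} with $r=2e-j>0$) and the constant bookkeeping both check out against the paper's computation.
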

\begin{proof}
Note that $f_{\rho}=f$ since $f$ is a normalized eigenform. Lemma \ref{lem:shimuraD} gives
\begin{align}
 \langle G_{k,D}\delta_{k}^{(2e)}E_{k,|D|}^{\ast}(z;\chi_D),f\rangle_{\Gamma_0(|D|)}= \frac{\Gamma(2k+2e-1)\Gamma(k+2e)}{(4\pi)^{2k+4e-1}\Gamma(k)}D(2k+2e-1,f,G_{k,D}).
\end{align}
By Lemma \ref{lem:shimuralemma6} and \eqref{eq:ShimuraoperatorandRankin-Cohenbracket}, 
\begin{align}\langle G_{k,D}\delta_{k}^{(2e)}E_{k,|D|}^{\ast}(z;\chi_D),f\rangle_{\Gamma_0(|D|)}=\frac{1}{\binom{2k+4e-2}{2e}}\langle [E_{k,|D|}^{\ast}(z;\chi_D),G_{k,D}]_{2e},f\rangle,
\end{align}
which implies that
\begin{align}
   \langle [E_{k,|D|}^{\ast}(z;\chi_D),G_{k,D}]_{2e},f\rangle_{\Gamma_0(|D|)}&= \frac{\binom{2k+4e-2}{2e}\Gamma(2k+2e-1)\Gamma(k+2e)}{(4\pi)^{2k+4e-1}\Gamma(k)}D(2k+2e-1,f,G_{k,D}).
\end{align}
Since $G_{k,D}(z)=\frac{L_D(1-k)}{2}E_{k,|D|}^{\ast}(z;\chi_D)$ \eqref{eq:GkDandcuspinfty} and by Lemma \ref{lem:defofD}, we have
 \begin{align}
        \langle [G_{k,D},G_{k,D}]_{2e},f\rangle_{\Gamma_0(|D|)}&=\frac{L_D(1-k)}{2}\frac{\binom{2k+4e-2}{2e}\Gamma(2k+2e-1)\Gamma(k+2e)}{(4\pi)^{2k+4e-1}\Gamma(k)}D(2k+2e-1,f,G_{k,D})\\&=\frac{1}{2}\frac{\Gamma(2k+4e-1)\Gamma(k+2e)}{(2e)!(4\pi)^{2k+4e-1}\Gamma(k)}\frac{L_{D}(1-k)}{L_D(k)}L(f,2k+2e-1)L(f,D,k+2e),
    \end{align}
as desired.
\end{proof}
Now we  prove Propositions \ref{prop:FdkeInnerProduct} and \ref{prop:RankinSelberghalfintegral}, which generalize \cite[Proposition 1]{Kohnen-Zagier1981} and \cite[Proposition 2]{Kohnen-Zagier1981}, respectively.
\begin{proposition}\label{prop:FdkeInnerProduct}
    Let $f=\sum_{n=1}^{\infty} a(n)q^n$ be a normalized eigenform in $S_{2\ell}(1)$ with $\ell=k+2e, e>0$ and $k\geq4$, and let $D$ be an odd fundamental discriminant with $(-1)^\ell D>0$.  Then
    \begin{align}
       \langle \mathcal{F}_{D,k,e},f\rangle_=\frac{1}{2}\frac{\Gamma(2k+4e-1)\Gamma(k+2e)}{(2e)!(4\pi)^{2k+4e-1}\Gamma(k)}\frac{L_{D}(1-k)}{L_D(k)}L(f,2k+2e-1)L(f,D,k+2e).
    \end{align}
\end{proposition}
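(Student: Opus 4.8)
The plan is to reduce the level-one Petersson pairing $\langle \mathcal{F}_{D,k,e},f\rangle$ to the level-$|D|$ pairing already evaluated in Proposition~\ref{prop:RankinSelbergatcentralvalue}, via the adjointness of the trace map $\Tr^D_1$ with respect to the Petersson inner product. Recall from \eqref{eq:F_D,k,e} that $\mathcal{F}_{D,k,e}=\Tr^D_1([G_{k,D},G_{k,D}]_{2e})$, and that $[G_{k,D},G_{k,D}]_{2e}\in S_{2\ell}(|D|)$ since $2e>0$, so all integrals below converge absolutely.

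First I would record the standard adjointness identity: for $h\in S_{2\ell}(|D|)$ and $f\in S_{2\ell}(1)$,
\[
\langle \Tr^D_1 h, f\rangle_{\Gamma_0(1)}=\langle h, f\rangle_{\Gamma_0(|D|)}.
\]
This follows from the definition \eqref{eq:tracemap} of the trace map by an unfolding argument: since $f$ is $\Gamma_0(1)$-invariant, the function $\phi(z)=h(z)\overline{f(z)}\im(z)^{2\ell}$ is $\Gamma_0(|D|)$-invariant, and the transformation law \eqref{eq:slashoperator} gives $(h|_{2\ell}\gamma)(z)\overline{f(z)}\im(z)^{2\ell}=\phi(\gamma z)$ for each $\gamma\in\Gamma_0(|D|)\backslash\Gamma_0(1)$; summing over the cosets and folding the integral over $\Gamma_0(1)\backslash\mathbb{H}$ into one over $\Gamma_0(|D|)\backslash\mathbb{H}$ yields the claim.

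Applying this with $h=[G_{k,D},G_{k,D}]_{2e}$ gives
\[
\langle \mathcal{F}_{D,k,e},f\rangle=\langle [G_{k,D},G_{k,D}]_{2e}, f\rangle_{\Gamma_0(|D|)},
\]
and since $f$ is a normalized level-one eigenform (so that $f_\rho=f$, its Hecke eigenvalues being real) the right-hand side is precisely the quantity computed in Proposition~\ref{prop:RankinSelbergatcentralvalue}. Substituting that formula finishes the proof. The only step with genuine content is the adjointness identity; everything else is bookkeeping. The point to check carefully is that the weight-$2\ell$ automorphy factors cancel in $\phi(\gamma z)$, which they do because every $\gamma\in\mathrm{SL}_2(\mathbb{Z})$ has determinant one and the $\det(\gamma)^{\ell}$ factor in \eqref{eq:slashoperator} is therefore trivial. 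No delicate analytic issues arise, as the cuspidality of $[G_{k,D},G_{k,D}]_{2e}$ guarantees absolute convergence throughout.
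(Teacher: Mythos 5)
Your proposal is correct and follows essentially the same route as the paper: both reduce $\langle \mathcal{F}_{D,k,e},f\rangle$ to $\langle [G_{k,D},G_{k,D}]_{2e},f\rangle_{\Gamma_0(|D|)}$ via the adjointness of $\Tr^D_1$ with respect to the Petersson inner product and then invoke Proposition \ref{prop:RankinSelbergatcentralvalue}. The only difference is that you sketch the unfolding proof of the adjointness identity, whereas the paper simply cites Gross--Zagier for it.
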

\begin{proof}
Recall that \eqref{eq:F_D,k,e}
\begin{align}
\mathcal{F}_{D,k,e}(z)=\Tr^D_1[G_{k,D}(z),G_{k,D}(z)]_e.
\end{align}
As $\langle f,g\rangle_{\Gamma_0(M)}=\langle f,\Tr_N^M g\rangle_{\Gamma_0(N)}$ for $N\mid M$, for $f\in S_k(N),g\in M_{k}(M)$ (see \cite[p.~271]{GZ-86}), we get
    \begin{align}
\langle\mathcal{F}_{D,k,e},f\rangle&=
 \langle[G_{k,D}(z),G_{k,D}(z)]_{2e},f\rangle_{\Gamma_0(|D|)}.\end{align}
  Then the result follows from Proposition \ref{prop:RankinSelbergatcentralvalue}. 
\end{proof}

\begin{proposition}\label{prop:RankinSelberghalfintegral}
    Let $g=\sum c_g(n)q^n\in S^{+}_{\ell+1/2}(4)$ be a Hecke eigenform and  $f\in S_{2\ell}(1)$ be the normalized Hecke eigenform corresponding to it by the Shimura correspondence, where $\ell=k+2e, e>0$ and $k\geq4$. Let $D$ be an odd fundamental discriminant with $(-1)^\ell D>0$. Then
    \begin{align}
        \langle g,\mathcal{G}_{D,k,e}\rangle=\frac{3}{2}\frac{\Gamma(k+2e-\frac{1}{2})\Gamma(k+e)}{e!(4\pi)^{k+2e-1/2}\Gamma(k)}\frac{L_D(1-k)}{L_D(k)}|D|^{-k-e+1/2}L(f,2k+2e-1)c_g(|D|),
    \end{align}
    where the Petersson inner product is  $\langle g,\mathcal{G}_{D,k,e}\rangle:=\int_{\Gamma_0(4)\backslash\mathbb{H}}g(z)\overline{\mathcal{G}_{D,k,e}(z)}\im(z)^{k+2e+1/2}d\mu$. 
\end{proposition}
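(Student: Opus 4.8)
The plan is to evaluate the Petersson product by a Rankin--Selberg unfolding, in parallel with Proposition \ref{prop:RankinSelbergatcentralvalue}, but exploiting the fact that here the two weights entering the bracket are far apart, so that Zagier's formula (Lemma \ref{lem:RankinSelbergZagier}) applies verbatim rather than through the Maass--Shimura machinery. I would work from the \emph{original} definition \eqref{eq:G_D,k,e} of $\mathcal{G}_{D,k,e}$ rather than the alternate formula of Proposition \ref{prop:projectioncomputation}, precisely because \eqref{eq:G_D,k,e} is built from $G_{k,4D}$, which by \eqref{eq:Gk4Dandcuspinfty} is an honest scalar multiple of the Eisenstein series $E_{k,4D}^{\ast}(z;\chi_D)$ at the cusp at infinity of level $4|D|$. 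First, since $g$ lies in the Kohnen plus space and $\pr^{+}$ is the orthogonal projection onto it, the projection may be dropped: $\langle g,\pr^{+}X\rangle_{\Gamma_0(4)}=\langle g,X\rangle_{\Gamma_0(4)}$. Next, using the self-adjointness of the trace map, $\langle \phi,\Tr^{M}_{N}\psi\rangle_{\Gamma_0(N)}=\langle\phi,\psi\rangle_{\Gamma_0(M)}$, I would push the level up to $4|D|$, obtaining
\[
\langle g,\mathcal{G}_{D,k,e}\rangle=\tfrac{3}{2}\Bigl(1-\legendre{D}{2}2^{-k}\Bigr)^{-1}\bigl\langle g,[G_{k,4D},\theta(|D|z)]_e\bigr\rangle_{\Gamma_0(4D)}.
\]

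Then I would substitute \eqref{eq:Gk4Dandcuspinfty} and use the symmetry $[\alpha,\beta]_e=(-1)^e[\beta,\alpha]_e$ to write the bracket as a multiple of $[\theta(|D|z),E_{k,4D}^{\ast}(\cdot;\chi_D)]_e$, and apply Lemma \ref{lem:RankinSelbergZagier} at level $N=4|D|$ with cusp form $g$, modular form $\theta(|D|z)$ of weight $k_1=\tfrac{1}{2}$, and Eisenstein series of weight $k_2=k$; the hypothesis $k_2\ge k_1+2$ holds since $k\ge4$, and the two factors of $(-1)^e$ cancel. The resulting Dirichlet series is $\sum_{n\ge1}c_g(n)\overline{b(n)}\,n^{-(k+e-1/2)}$ with $b(n)$ the Fourier coefficients of $\theta(|D|z)$. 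Since $\theta(|D|z)=1+2\sum_{m\ge1}q^{|D|m^2}$, only the indices $n=|D|m^2$ survive, collapsing the sum to $2|D|^{-(k+e-1/2)}\sum_{m\ge1}c_g(|D|m^2)\,m^{-(2k+2e-1)}$. Finally I would invoke the Shimura relation: from $\mathcal{S}_D(g)=c_g(|D|)f$ (as in the proof of Proposition \ref{prop:isomorphism}) together with the defining formula \eqref{eq:defofshimuralift}, comparing Dirichlet series gives $\sum_{m\ge1}c_g(|D|m^2)\,m^{-s}=c_g(|D|)L(f,s)/L_D(s-\ell+1)$, which at $s=2k+2e-1$ equals $c_g(|D|)L(f,2k+2e-1)/L_D(k)$.

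Collecting constants then finishes the proof: the $\Gamma$-factors produced by Lemma \ref{lem:RankinSelbergZagier} are exactly $\Gamma(k+2e-\tfrac{1}{2})\Gamma(k+e)/\bigl(e!\,(4\pi)^{k+2e-1/2}\Gamma(k)\bigr)$, the scalar $\tfrac{L_D(1-k)}{2}$ from \eqref{eq:Gk4Dandcuspinfty} combines with the factor $2$ from $\theta$ to give $L_D(1-k)$, the powers of $|D|$ assemble into $|D|^{-k-e+1/2}$, and the surviving $\bigl(1-\legendre{D}{2}2^{-k}\bigr)$ from \eqref{eq:Gk4Dandcuspinfty} cancels against the normalizing factor $\tfrac{3}{2}\bigl(1-\legendre{D}{2}2^{-k}\bigr)^{-1}$, leaving precisely the constant $\tfrac{3}{2}$. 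The hard part will be the two structural inputs that make the unfolding legitimate: first, justifying that $\pr^{+}$ may be discarded against a plus-space form (orthogonality of the Kohnen decomposition); and second, correctly identifying $G_{k,4D}$ with a multiple of the Eisenstein series at the cusp at infinity of level $4|D|$ via Section \ref{sec:Eisenstein}, so that the half-integral Rankin--Selberg convolution matches the hypotheses of Lemma \ref{lem:RankinSelbergZagier}. Once these are in place, the remaining constant bookkeeping---though it is exactly what produces the clean factor $\tfrac{3}{2}$---is routine.
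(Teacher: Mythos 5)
Your proposal is correct and follows essentially the same route as the paper's proof: drop $\pr^{+}$ against the plus-space form, unfold the trace to level $4|D|$, identify $G_{k,4D}$ with $\tfrac{L_D(1-k)}{2}\bigl(1-\legendre{D}{2}2^{-k}\bigr)E^{\ast}_{k,4D}(z;\chi_D)$ via \eqref{eq:Gk4Dandcuspinfty}, apply Lemma \ref{lem:RankinSelbergZagier} with $k_1=\tfrac12$, $k_2=k$, and convert $\sum_m c_g(|D|m^2)m^{-(2k+2e-1)}$ into $c_g(|D|)L(f,2k+2e-1)/L_D(k)$. The only cosmetic difference is that the paper cites Kohnen's Theorem 1(ii) for this last Dirichlet-series identity, whereas you rederive it from $\mathcal{S}_D(g)=c_g(|D|)f$ and \eqref{eq:defofshimuralift}; these are the same fact.
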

\begin{proof}
    Recall that  $\mathcal{G}_{D, k, e}$ is given in \eqref{eq:G_D,k,e}:
    \begin{equation}
\mathcal{G}_{D, k, e}(z)=\frac{3}{2}\left(1-2^{-k}\left(\frac{D}{2}\right)\right)^{-1}\pr^{+}\Tr^{4D}_{4}[G_{k,4D}(z),\theta(|D|z)]_e. 
\end{equation}
    Since $\pr^{+}$ \eqref{eq:defofprojection} is the projection from $M_{\ell+1/2}(4)$ to $M^{+}_{\ell+1/2}(4)$,

    we have
    \begin{align}
    \langle g,\mathcal{G}_{D,k,e}\rangle&=\frac{3}{2}\left(1-2^{-k}\left(\frac{D}{2}\right)\right)^{-1}\langle \pr^{+}g,\Tr^{4D}_{4}([G_{k,4D}(z),\theta(|D|z)]_e\rangle\\&= \frac{3}{2}\left(1-2^{-k}\left(\frac{D}{2}\right)\right)^{-1}\langle g,\Tr^{4D}_{4}([G_{k,4D}(z),\theta(|D|z)]_e\rangle\\&
    = \frac{3}{2}\left(1-2^{-k}\left(\frac{D}{2}\right)\right)^{-1}\langle g,([G_{k,4D}(z),\theta(|D|z)]_e\rangle_{\Gamma_0(4|D|)}
    \\&=\frac{3}{4}L_{D}(1-k)\langle g(z),[E_{k,4D}^{\ast}(z;\chi_D),\theta(|D|z)]_e\rangle_{\Gamma_0(4|D|)}\\&=\frac{3(-1)^e}{4}L_{D}(1-k)\langle g(z),[\theta(|D|z),E_{k,4D}^{\ast}(z;\chi_D)]_e\rangle_{\Gamma_0(4|D|)},\end{align}
    where we used \eqref{eq:Gk4Dandcuspinfty} in the second to last equality. Now Lemma \ref{lem:RankinSelbergZagier} gives
    \begin{align}
        \langle g,\mathcal{G}_{D,k,e}\rangle&=\frac{3(-1)^e}{4}L_{D}(1-k)\frac{(-1)^e}{e!}\frac{\Gamma(k+2e-\frac{1}{2})\Gamma(k+e)}{(4\pi)^{k+2e-1/2}\Gamma(k)}\sum_{n=1}^{\infty}\frac{2c_g(n^2|D|)}{(|D|n^2)^{k+e+1/2-1}}\\&=\frac{3}{2}\frac{\Gamma(k+2e-\frac{1}{2})\Gamma(k+e)}{e!(4\pi)^{k+2e-1/2}\Gamma(k)}L_D(1-k)|D|^{-(k+e-1/2)}\sum_{n=1}^{\infty}\frac{c_g(n^2|D|)}{n^{2k+2e-1}}.\label{eq:middlestepofinnerprodGkDe}
    \end{align}
By \cite[Theorem 1 (ii)]{Kohenhalfintegralweight1980}, we get
\begin{align}
    L_{D}(s-(k+2e)+1)\sum_{n=1}^{\infty}\frac{c_g(n^2|D|)}{n^{2k+2e-1}}=c_g(|D|)L(f,s),
\end{align}
which implies that
\begin{align}
   \langle g,\mathcal{G}_{D,k,e}\rangle=\frac{3}{2}\frac{\Gamma(k+2e-\frac{1}{2})\Gamma(k+e)}{e!(4\pi)^{k+2e-1/2}\Gamma(k)}\frac{L_D(1-k)}{L_D(k)}|D|^{-k-e+1/2}L(f,2k+2e-1)c_g(|D|),
\end{align}
as desired. 
\end{proof}

\section{Fourier expansions}\label{sec:fourier}
In this section, we compute the Fourier coefficients needed for the proof of Theorem \ref{thm:lift}. It is convenient to have explicit formulas for  $G_{k,D}$ and $\theta$ under the action of certain matrices in $\Sl_2(\mathbb{Z})$, which we do in Lemmas \ref{lem:FourierexpansionofGkD} and \ref{lem:Fourierexpansiontheta}. Propositions \ref{prop:FourierexFDE} and \ref{prop:FourierexofGDe} then give formulas for $\mathcal F_{D,k,e}$ and $\mathcal G_{D,k,e}$, which we use in the final computation of the Fourier coefficients carried out in Lemmas \ref{lem:ffouriercoefficient} and \ref{eq:gfouriercoefficients}.
\begin{lemma}\label{lem:FourierexpansionofGkD} Let $k\ge3$. Suppose $D$ is an odd fundamental discriminant and $D=D_1D_2$ is a product of two fundamental discriminants. Let $\gamma=\begin{bsmallmatrix}
    a&b\\c&d
\end{bsmallmatrix}\in\Sl_2(\mathbb{Z})$ with $\gcd(c,D)=|D_1|$. Then
\begin{align}
    G_{k,D}(z)\bigg|_k\begin{bmatrix}
        a & b\\ c& d\end{bmatrix}=\legendre{D_2}{c}\legendre{D_1}{d|D_2|}\legendre{D_2}{|D_1|}\frac{\varepsilon_{|D_1|}}{\varepsilon_{|D|}}|D_2|^{-1/2}G_{k,D_1,D_2}\left(\frac{z+c^{\ast}d}{|D_2|}\right),
\end{align}
where $c^{\ast}$ is an integer with $cc^{\ast}\equiv 1\pmod{|D_2|}$, and  $\varepsilon_n$ is given by 
\begin{equation}\label{eq:epsilonn}
    \varepsilon_n=\begin{cases}
        1 & n\equiv 1\pmod4, \\
        i & n\equiv 3\pmod4.
    \end{cases}
\end{equation}

\end{lemma}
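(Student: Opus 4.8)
The plan is to begin from the lattice-sum representation \eqref{eq:GkDforcomputingFourier} of $G_{k,D}$, apply the slash operator, and then reorganize the resulting sum over $\ZZ^2$ by congruences modulo $D_1$ and $D_2$. Writing $C_D=\frac{(k-1)!|D|^k}{2(-2\pi i)^k G(\chi_D)}$ and renaming the summation indices $m,n$ to avoid clashing with the entries of $\gamma$, the definition \eqref{eq:slashoperator} together with the identity $m\gamma z+n=\big((ma+nc)z+(mb+nd)\big)/(cz+d)$ cancels the factor $(cz+d)^{-k}$ and yields
\[
G_{k,D}(z)\big|_k\gamma=C_D\sideset{}{'}\sum_{\substack{m,n\in\ZZ\\ D\mid m}}\frac{\chi_D(n)}{\big((ma+nc)z+(mb+nd)\big)^k}.
\]
I would then substitute $(M,N)=(m,n)\gamma$, whose inverse $(m,n)=(M,N)\gamma^{-1}$ uses $\gamma^{-1}=\begin{bsmallmatrix}d&-b\\-c&a\end{bsmallmatrix}$, so that $m=Md-Nc$ and $n=Na-Mb$. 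This makes $(M,N)$ the new summation variable, turning the constraint $D\mid m$ into $D\mid(Md-Nc)$ and the character into $\chi_D(Na-Mb)$.

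The heart of the argument is to decouple these two data using $\chi_D=\chi_{D_1}\chi_{D_2}$ and the Chinese Remainder Theorem. From $\gcd(c,D)=|D_1|$ one gets $D_1\mid c$ and $\gcd(c,D_2)=1$, and $\det\gamma=1$ gives $ad\equiv1\pmod{D_1}$; hence $D\mid(Md-Nc)$ is equivalent to $D_1\mid M$ together with $N\equiv Mdc^{\ast}\pmod{|D_2|}$, where $cc^{\ast}\equiv1\pmod{|D_2|}$. For the character I would evaluate each factor on the new indices: modulo $D_1$, using $D_1\mid M$ and $\chi_{D_1}(a)=\chi_{D_1}(d)$, one finds $\chi_{D_1}(Na-Mb)=\chi_{D_1}(d)\chi_{D_1}(N)$; modulo $D_2$, the relation $ad=1+bc$ gives $adc^{\ast}-b\equiv c^{\ast}$, so $Na-Mb\equiv Mc^{\ast}\pmod{|D_2|}$ and thus $\chi_{D_2}(Na-Mb)=\chi_{D_2}(c)\chi_{D_2}(M)$.

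Next I would reparametrize the admissible pairs by $M=m$, $N=mc^{\ast}d+n|D_2|$ with $(m,n)$ ranging over $D_1\mid m$; this is a bijection onto the indices with $N\equiv Mdc^{\ast}\pmod{|D_2|}$, and it produces $Mz+N=|D_2|(mw+n)$ for $w=(z+c^{\ast}d)/|D_2|$. Pulling out $\chi_{D_1}(|D_2|)$ (again via $D_1\mid m$) and the scalar $|D_2|^{-k}$, the remaining sum is exactly the lattice sum in \eqref{eq:GkD1D2} defining $G_{k,D_1,D_2}(w)$. It then remains to assemble constants: the ratio of $C_D$ to the prefactor of $G_{k,D_1,D_2}$ equals $|D_2|^k G(\chi_{D_1})/\big(G(\chi_D)\chi_{D_2}(|D_1|)\big)$, which cancels the $|D_2|^{-k}$, and I would simplify $G(\chi_{D_1})/G(\chi_D)=\tfrac{\varepsilon_{|D_1|}}{\varepsilon_{|D|}}|D_2|^{-1/2}$ using the Gauss-sum value $G(\chi_D)=\varepsilon_{|D|}|D|^{1/2}$ for odd fundamental discriminants. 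Collecting the character factors as $\chi_{D_2}(c)\,\chi_{D_1}(d|D_2|)\,\chi_{D_2}(|D_1|)$ (recall $\chi_{D_i}=\legendre{D_i}{\cdot}$) delivers the stated formula.

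I expect the main obstacle to be bookkeeping rather than conceptual: keeping the quadratic symbols, the inverse $c^{\ast}$, and the root-of-unity factors $\varepsilon_{|D_1|},\varepsilon_{|D|}$ mutually consistent across the two coprime moduli. The only genuine external input beyond careful tracking is the multiplicativity $G(\chi_D)=\chi_{D_1}(|D_2|)\chi_{D_2}(|D_1|)G(\chi_{D_1})G(\chi_{D_2})$ of Gauss sums for coprime conductors, which is what ultimately reconciles the $\varepsilon$-factors with the Legendre symbols appearing in the final expression.
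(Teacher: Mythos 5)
Your proposal is correct and follows essentially the same route as the paper's proof: starting from the lattice-sum form \eqref{eq:GkDforcomputingFourier}, changing variables by $\gamma$, decoupling the divisibility condition and the character modulo $D_1$ and $D_2$ via the Chinese Remainder Theorem, reparametrizing $N=mc^{\ast}d+n|D_2|$ to recognize the sum \eqref{eq:GkD1D2} for $G_{k,D_1,D_2}$, and evaluating $G(\chi_{D_1})/G(\chi_D)$ by the explicit formula $G(\chi_D)=\varepsilon_{|D|}|D|^{1/2}$. The only differences are cosmetic (you verify the character identities by reducing $Na-Mb$ modulo $D_1$ and $D_2$ directly rather than multiplying through by $d$ and $c$ as the paper does, and you avoid the paper's CRT normalization $D_1\mid c^{\ast}$, which is harmless since $G_{k,D_1,D_2}$ is $1$-periodic).
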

\begin{proof}
 We follow the idea in Gross-Zagier \cite[pp.~273-275]{GZ-86}.   By equation \eqref{eq:GkDforcomputingFourier}, we have
    \begin{align}
    \frac{2(-2\pi i)^kG(\chi_D)}{(k-1)!|D|^k}G_{k,D}(z)\bigg|_k\begin{bmatrix}
        a&b\\ c&d\end{bmatrix}&=\sideset{}{'}\sum_{\substack{l,r\in\mathbb{Z}\\D\mid l}}\frac{\chi_D(r)}{(l(az+b)+r(cz+d))^k}\\&=\sideset{}{'}\sum_{\substack{m,n\in\ZZ \\md\equiv nc~\text{mod}~|D|}}\frac{\chi_D(an-bm)}{(mz+n)^k},
    \end{align}
    where $(m,n)=(l,r)\begin{bsmallmatrix}
        a&b\\ c&d\end{bsmallmatrix}$. Since $md\equiv nc\pmod{|D|}$, we have 
        \begin{align}
            d(an-bm)&=adn-bmd\equiv adn-bcn\equiv n\pmod{|D|},\label{eq:dealwithchiD1}\\ c(an-bm)&=anc-bcm\equiv adm-bcm\equiv m\pmod{|D|}.\label{eq:dealwithchiD2}
        \end{align}
Note also that $\gcd(D_1,D_2)=1$. Then \eqref{eq:dealwithchiD1} and \eqref{eq:dealwithchiD2} imply that
 \begin{align}
     \chi_D(an-bm)&=\chi_{D_1}(an-bm)\chi_{D_2}(an-bm)\\&=\chi_{D_1}(d)\chi_{D_1}(n)\chi_{D_2}(c)\chi_{D_2}(m).
 \end{align}
 Since $D_1,D_2\mid(md-nc)$, $(d,D_1)=1, (c,D_2)=1$ and $(c,D)=|D_1|$, we must have $D_1\mid m$; and $n\equiv c^{\ast}md\pmod{|D_2|}$. By the Chinese Remainder Theorem, we can choose $c^{\ast}$ such that $D_1\mid c^{\ast}$. Now we write $n=c^{\ast}md+l|D_2|$. It follows that
 \begin{align}
 \frac{2(-2\pi i)^kG(\chi_D)}{(k-1)!|D|^k}G_{k,D}(z)\bigg|_k\begin{bmatrix}
        a&b\\ c&d\end{bmatrix}&=\sideset{}{'}\sum_{\substack{m,l\in\mathbb{Z}\\ D_1\mid m}}\frac{\chi_{D_1}(d)\chi_{D_1}(c^{\ast}md+l|D_2|)\chi_{D_2}(c)\chi_{D_2}(m)}{(mz+mc^{\ast}d+l|D_2|)^k}\\&=\chi_{D_2}(c)\chi_{D_1}(d|D_2|)\sideset{}{'}\sum_{\substack{m,l\in\mathbb{Z}\\ D_1\mid m}}\frac{\chi_{D_2}(m)\chi_{D_1}(l)}{(mz+mc^{\ast}d+l|D_2|)^k}\\&=\chi_{D_2}(c)\chi_{D_1}(d|D_2|)|D_2|^{-k}\sideset{}{'}\sum_{\substack{m,l\in\mathbb{Z}\\ D_1\mid m}}\frac{\chi_{D_2}(m)\chi_{D_1}(l)}{\left(m\frac{z+c^{\ast}d}{|D_2|}+l\right)^k}.\label{eq:middle1}
 \end{align}
Note that \eqref{eq:GkD1D2} implies that 
\begin{align}
    \sideset{}{'}\sum_{\substack{m,l\in\mathbb{Z}\\ D_1\mid m}}\frac{\chi_{D_2}(m)\chi_{D_1}(l)}{\left(m\frac{z+c^{\ast}d}{|D_2|}+l\right)^k}=\frac{2(-2\pi i)^kG(\chi_{D_1})}{|D_1|^k(k-1)!}\chi_{D_2}(|D_1|)G_{k,D_1,D_2}\left(\frac{z+c^{\ast}d}{|D_2|}\right).\label{eq:middle2}
\end{align} 
Plugging \eqref{eq:middle2} into \eqref{eq:middle1} gives
 \begin{align}
          G_{k,D}(z)\bigg|_k\begin{bmatrix}
        a & b\\ c& d\end{bmatrix}&=\chi_{D_2}(c)\chi_{D_1}(d|D_2|)\chi_{D_2}(|D_1|)\frac{G(\chi_{D_1})}{G(\chi_D)}G_{k,D_1,D_2}\left(\frac{z+c^{\ast}d}{|D_2|}\right). \label{eq:fortheuseofFDE}
 \end{align}
From \cite[Proposition 2.2.24. p.~49]{Cohenbook} we know that 
\begin{align}
  G(\chi_{D_1})=\varepsilon_{|D_1|}|D_1|^{1/2}\quad{\rm and}\quad G(\chi_{D})=\varepsilon_{|D|}|D|^{1/2},
\end{align}
which implies that 
\begin{align}
    G_{k,D}(z)\bigg|_k\begin{bmatrix}
        a & b\\ c& d\end{bmatrix}=\legendre{D_2}{c}\legendre{D_1}{d|D_2|}\legendre{D_2}{|D_1|}\frac{\varepsilon_{|D_1|}}{\varepsilon_{|D|}}|D_2|^{-1/2}G_{k,D_1,D_2}\left(\frac{z+c^{\ast}d}{|D_2|}\right),
\end{align}
as desired.
\end{proof}
\begin{lemma}\label{lem:Fourierexpansiontheta}Let $D$ be an odd fundamental discriminant and $D=D_1D_2$ be a product of two fundamental discriminants. Then
\begin{align}
    \theta(z)\bigg|_{\frac{1}{2}}\begin{bmatrix}
        |D|&0\\4|D_1|&1
\end{bmatrix}=\varepsilon_{|D_2|}^{-1}|D|^{1/4}|D_2|^{-1/2}\theta\left(\frac{|D_1|z+4^{\ast}}{|D_2|}\right),
\end{align}
where $4^{\ast}$ is an integer such that $44^{\ast}\equiv1\pmod{|D_2|}$.
\end{lemma}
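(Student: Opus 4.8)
The plan is to factor the matrix $\begin{bsmallmatrix}|D|&0\\4|D_1|&1\end{bsmallmatrix}$ as a product $\gamma M$ in which $\gamma$ is an honest element of $\Gamma_0(4)$ (so the classical theta multiplier applies) and $M$ is upper triangular (so its slash action can be read off directly from \eqref{eq:slashoperator}). Since $D$ is odd we have $\gcd(|D_2|,4)=1$, so I may choose integers $b,d$ with $|D_2|d-4b=1$ and set
\[
\gamma=\begin{bmatrix}|D_2|&b\\4&d\end{bmatrix}\in\Gamma_0(4),\qquad M=\begin{bmatrix}|D_1|&-b\\0&|D_2|\end{bmatrix}.
\]
A direct multiplication using $|D_2|d-4b=1$ and $|D|=|D_1||D_2|$ confirms $\gamma M=\begin{bsmallmatrix}|D|&0\\4|D_1|&1\end{bsmallmatrix}$ and $\det M=|D|$.

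First I would record the two congruences that read off the multiplier. Reducing $|D_2|d-4b=1$ modulo $4$ gives $|D_2|d\equiv1\pmod4$, whence $d\equiv|D_2|\pmod4$ and therefore $\varepsilon_d=\varepsilon_{|D_2|}$ in the notation of \eqref{eq:epsilonn}. Reducing modulo $|D_2|$ gives $4(-b)\equiv1\pmod{|D_2|}$, so $-b\equiv 4^{\ast}\pmod{|D_2|}$. I would then invoke the theta transformation law for $\gamma\in\Gamma_0(4)$ in the same normalization used in the proof of Lemma \ref{lem:thetavs}, namely $\theta|_{1/2}\gamma=\legendre{c}{d}\varepsilon_d^{-1}\theta$ with $c=4$; since $d$ is odd (from $|D_2|d\equiv1\pmod2$) we have $\legendre{4}{d}=\legendre{2}{d}^2=1$, so $\theta|_{1/2}\gamma=\varepsilon_{|D_2|}^{-1}\theta$.

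Next I would apply the slash by $M$ directly. Since $M$ has lower-left entry $0$ and lower-right entry $|D_2|$, formula \eqref{eq:slashoperator} gives
\[
\bigl(\theta|_{1/2}\gamma\bigr)\big|_{1/2}M(z)=\varepsilon_{|D_2|}^{-1}|D|^{1/4}|D_2|^{-1/2}\theta\!\left(\frac{|D_1|z-b}{|D_2|}\right),
\]
and the periodicity $\theta(w+1)=\theta(w)$ together with $-b\equiv 4^{\ast}\pmod{|D_2|}$ lets me replace $-b$ by $4^{\ast}$ in the argument. Combining this with the identity $\theta|_{1/2}(\gamma M)=\theta|_{1/2}\begin{bsmallmatrix}|D|&0\\4|D_1|&1\end{bsmallmatrix}$ would then yield the claimed transformation.

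The main obstacle is not the bookkeeping of the quadratic symbols but the fact that the half-integral-weight slash operator does not a priori compose without a branch-cut sign, so I must justify the step $\theta|_{1/2}(\gamma M)=(\theta|_{1/2}\gamma)|_{1/2}M$. This is exactly where the upper-triangular shape of $M$ is essential: writing $w=4|D_1|z+1\in\mathbb{H}$, the only place a sign could enter is the comparison of $(w/|D_2|)^{1/2}|D_2|^{1/2}$ with $w^{1/2}$, and since $|D_2|>0$ division by $|D_2|$ does not cross the principal branch cut, so the two automorphy factors agree exactly (and the determinant powers multiply as $1^{1/4}\cdot|D|^{1/4}=|D|^{1/4}$). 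With that compatibility in hand the computation is clean.
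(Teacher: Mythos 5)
Your proposal is correct and follows essentially the same route as the paper: the identical factorization into a $\Gamma_0(4)$ matrix times the upper-triangular matrix $\begin{bsmallmatrix}|D_1|&\ast\\0&|D_2|\end{bsmallmatrix}$, the theta multiplier $\legendre{4}{d}\varepsilon_d^{-1}$ with $d\equiv|D_2|\pmod 4$, and periodicity of $\theta$ to replace $-b$ by $4^{\ast}$. If anything, your bookkeeping is slightly cleaner — you keep the multiplier as $\varepsilon_{|D_2|}^{-1}$ throughout (matching the lemma statement, where the paper's proof momentarily drops the inverse) and you explicitly justify the cocycle compatibility $\theta|_{1/2}(\gamma M)=(\theta|_{1/2}\gamma)|_{1/2}M$, which the paper takes for granted.
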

\begin{proof}
    Since $(4,D_2)=1$, there exist $n,m\in\mathbb{Z}$ such that $n|D_2|+4m=1$ and 
    \begin{align}
        \begin{bmatrix}
            |D|&0\\4|D_1|&1
        \end{bmatrix}=\begin{bmatrix}
            |D_2|&-m\\4&n
        \end{bmatrix}\begin{bmatrix}
            |D_1|&m\\0&|D_2|
        \end{bmatrix}.
    \end{align}
    It follows that 
    \begin{align}
        \theta(z)\bigg|_{\frac{1}{2}}\begin{bmatrix}
        |D|&0\\4|D_1|&1
    \end{bmatrix}=\theta(z)\bigg|_{\frac{1}{2}}\begin{bmatrix}
            |D_2|&-m\\4&n
        \end{bmatrix}\begin{bmatrix}
            |D_1|&m\\0&|D_2|
        \end{bmatrix}.
    \end{align}
    Recall that the transformation law for $\theta$ (see e.g \cite[p.~148]{Koblitzbook}) gives
    \begin{align}
        \theta(z)\bigg|_{\frac{1}{2}}\begin{bmatrix}
            |D_2|&-m\\4&n
        \end{bmatrix}=\left(\frac{4}{n}\right)\varepsilon_{n}^{-1}\theta(z),
    \end{align}
    where $\varepsilon_n$ is as in \eqref{eq:epsilonn}. Since $n|D_2|+4m=1$ and $D_2\equiv1\pmod4$, we have $\varepsilon_{n}=\varepsilon_{|D_2|}$. Hence
        \begin{align}\theta(z)\bigg|_{\frac{1}{2}}\begin{bmatrix}
        |D|&0\\4|D_1|&1
    \end{bmatrix}=&
        \varepsilon_{|D_2|}\theta(z)\bigg|_{1/2}\begin{bmatrix}
            |D_1|&m\\0&|D_2|
\end{bmatrix}\\=&\varepsilon_{|D_2|}|D|^{1/4}|D_2|^{-1/2}\theta\left(\frac{|D_1|z+4^{\ast}}{|D_2|}\right),
    \end{align}
    which gives the desired result. 
\end{proof}

Next, we give some computations generalizing the lemma in \cite[p.~193]{Kohnen-Zagier1981}.

\begin{proposition}\label{prop:FourierexFDE}
Let $k\ge4$ and $e>0$ with $\ell=k+2e$ and let $D$ be an odd fundamental discriminant with $(-1)^{\ell}D>0$. Then
    \begin{align}
        \mathcal{F}_{D,k,e}(z)=\sum_{D=D_1D_2}\legendre{D_2}{-1}|D_{2}|^{-2e}U_{|D_2|}([G_{k,D_1,D_2}(z),G_{k,D_1,D_2}(z)]_{2e}),
    \end{align}
    where the summation is over all decompositions of $D$ as a product of two fundamental discriminants, and $U_{|D_2|}$ is the operator defined in \eqref{eq:defofUmap}.
\end{proposition}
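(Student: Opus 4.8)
The plan is to unfold the trace in the definition \eqref{eq:F_D,k,e} of $\mathcal{F}_{D,k,e}$ and then reduce every summand to Lemma \ref{lem:FourierexpansionofGkD}. Writing $\Tr^D_1$ as a sum over coset representatives $\gamma=\begin{bsmallmatrix}a&b\\c&d\end{bsmallmatrix}$ of $\Gamma_0(|D|)\backslash\Sl_2(\ZZ)$, and using that the Rankin--Cohen bracket is slash-equivariant, namely $[f,g]_e|_{a+b+2e}\gamma=[f|_a\gamma,g|_b\gamma]_e$, together with the fact that $[G_{k,D},G_{k,D}]_{2e}$ has weight $2\ell$, I would first write
\begin{align}
\mathcal{F}_{D,k,e}(z)=\sum_{\gamma}\left[G_{k,D}\big|_k\gamma,\,G_{k,D}\big|_k\gamma\right]_{2e}.
\end{align}
I would then partition the cosets according to the value $\gcd(c,D)=|D_1|$, which is well defined on cosets since it is unchanged under $c\mapsto uc$ for a unit $u$. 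As $D$ is an odd fundamental discriminant it is squarefree, so each divisor $|D_1|$ of $|D|$ corresponds to a unique factorization $D=D_1D_2$ into coprime fundamental discriminants; this turns the single sum into the double sum $\sum_{D=D_1D_2}\sum_{\gcd(c,D)=|D_1|}$.

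Next I would feed each factor $G_{k,D}|_k\gamma$ into Lemma \ref{lem:FourierexpansionofGkD}, writing it as $C_\gamma\,G_{k,D_1,D_2}\big(\tfrac{z+c^*d}{|D_2|}\big)$ with $C_\gamma=\legendre{D_2}{c}\legendre{D_1}{d|D_2|}\legendre{D_2}{|D_1|}\tfrac{\varepsilon_{|D_1|}}{\varepsilon_{|D|}}|D_2|^{-1/2}$. Two bookkeeping computations follow. First, since $\gcd(c,D)=|D_1|$ forces $\gcd(c,D_2)=1$, $\gcd(d,D_1)=1$ (because $D_1\mid c$ and $\gcd(c,d)=1$), and $\gcd(|D_1|,D_2)=1$, every Kronecker symbol in $C_\gamma$ is $\pm1$ and hence squares to $1$; combining the $\varepsilon$ factors via $\varepsilon_n^2=\legendre{-1}{n}$ and $\legendre{-1}{|D|}=\legendre{-1}{|D_1|}\legendre{-1}{|D_2|}$ gives $\varepsilon_{|D_1|}^2/\varepsilon_{|D|}^2=\legendre{-1}{|D_2|}=\legendre{D_2}{-1}$, so $C_\gamma^2=\legendre{D_2}{-1}|D_2|^{-1}$, independent of $\gamma$. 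Second, the derivatives in $[\,\cdot\,,\cdot\,]_{2e}$ are taken in $z$, so the chain rule applied to $G_{k,D_1,D_2}\big(\tfrac{z+c^*d}{|D_2|}\big)$ contributes $|D_2|^{-r}$ and $|D_2|^{-(2e-r)}$ to the $r$-th term and hence an overall factor $|D_2|^{-2e}$; this is the step most easily overlooked. Combining, each summand becomes
\begin{align}
\left[G_{k,D}\big|_k\gamma,\,G_{k,D}\big|_k\gamma\right]_{2e}=\legendre{D_2}{-1}|D_2|^{-2e-1}\,h\!\left(\tfrac{z+c^*d}{|D_2|}\right),
\end{align}
where $h:=[G_{k,D_1,D_2},G_{k,D_1,D_2}]_{2e}$.

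The remaining, and main, step is to identify the inner sum with the operator $U_{|D_2|}$ of \eqref{eq:defofUmap}. Since $G_{k,D_1,D_2}$ is a $q$-series, $h$ is $1$-periodic, so $h\big(\tfrac{z+c^*d}{|D_2|}\big)$ depends only on $c^*d\bmod|D_2|$; in particular the summand is well defined on cosets. I would then show that as $\gamma$ runs over the cosets with $\gcd(c,D)=|D_1|$, the residue $c^*d\bmod|D_2|$ takes each value in $\ZZ/|D_2|\ZZ$ exactly once. This follows from the bijection $\Gamma_0(|D|)\backslash\Sl_2(\ZZ)\cong\BP^1(\ZZ/|D|\ZZ)$, $\gamma\mapsto(c:d)$, and the Chinese Remainder splitting $\BP^1(\ZZ/|D|\ZZ)\cong\BP^1(\ZZ/|D_1|\ZZ)\times\BP^1(\ZZ/|D_2|\ZZ)$: the condition $\gcd(c,D)=|D_1|$ pins the first factor to the single point $(0:1)$ and forces $c$ to be a unit modulo $|D_2|$ in the second, so the second factor ranges over the points $(1:c^*d)$ with $c^*d\bmod|D_2|$ hitting every residue once. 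Hence the inner sum equals $\sum_{v\bmod|D_2|}h\big(\tfrac{z+v}{|D_2|}\big)=|D_2|\,U_{|D_2|}h(z)$, and the factor $|D_2|$ cancels the surplus $|D_2|^{-1}$, leaving $\legendre{D_2}{-1}|D_2|^{-2e}U_{|D_2|}h$. Summing over all factorizations $D=D_1D_2$ gives precisely the claimed identity. The only genuine obstacle is this coset-counting argument together with the periodicity needed to recognize $U_{|D_2|}$; the rest is the covariance of the bracket plus the character and chain-rule bookkeeping.
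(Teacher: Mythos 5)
Your proof is correct and takes essentially the same route as the paper's: both unfold $\Tr_1^D$ over cosets grouped by $\gcd(c,D)=|D_1|$, apply Lemma \ref{lem:FourierexpansionofGkD} to each slashed factor, square the constant to get $\legendre{D_2}{-1}|D_2|^{-1}$, and identify the inner sum with $U_{|D_2|}$. The only cosmetic differences are that the paper enumerates the cosets explicitly via Lemma \ref{lem:cosetreps} (so that $c^*d\equiv \mu+|D_1|^*$ visibly runs over all residues mod $|D_2|$, in place of your $\BP^1$/CRT counting argument) and absorbs your chain-rule factor $|D_2|^{-2e}$ into the slash-operator form \eqref{eq:defofUslash} of $U_{|D_2|}$.
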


\begin{proof}
    We consider the following system of representatives (Lemma \ref{lem:cosetreps}) of $\Gamma_0(|D|)\backslash \SL{2}(\ZZ)$, 
    \begin{equation*}
        \left\{\begin{bmatrix}
            1 & 0 \\ |D_1| & 1
        \end{bmatrix}\begin{bmatrix}
            1 & \mu \\ 0 & 1
        \end{bmatrix} \qquad \text{where } D=D_1D_2, \quad \mu \text{ mod }|D_2|\right\}
    \end{equation*}
    and $D_1,D_2$ are fundamental discriminants. By \eqref{eq:fortheuseofFDE} we have
    \begin{equation*}
       G_{k,D}(z)\bigg|_{k}\begin{bmatrix}
            1 & 0 \\ |D_1| & 1
        \end{bmatrix}\begin{bmatrix}
            1 & \mu \\ 0 & 1
        \end{bmatrix} =\legendre{D_2}{|D_1|}\legendre{D_1}{|D_2|}\legendre{D_2}{|D_1|}\frac{G(\chi_{D_1})}{G(\chi_D)}G_{k,D_1,D_2}\left(\frac{z+\mu+|D_1|^*}{|D_2|}\right)
    \end{equation*}
    where $|D_1|^*|D_1|=1 \mod |D_2|$. We then compute $\mathcal{F}_{D,k,e}(z)$, which is 
    \begin{align*}
        &\Tr_1^D([G_{k,D}(z),G_{k,D}(z)]_{2e}) \\
        =&\sum_{D_1D_2=D}\sum_{\mu \text{ mod } |D_2|} \left[G_{k,D}(z),G_{k,D}(z)\right]_{2e}\bigg|_{2k+4e}\begin{bmatrix}
            1 & 0 \\ |D_1| & 1
        \end{bmatrix}\begin{bmatrix}
            1 & \mu \\ 0 & 1
        \end{bmatrix} \\
        =&\sum_{D_1D_2=D}\sum_{\mu \text{ mod } |D_2|}\left[G_{k,D}(z)\bigg|_{k}\begin{bmatrix}
            1 & 0 \\ |D_1| & 1
        \end{bmatrix}\begin{bmatrix}
            1 & \mu \\ 0 & 1
        \end{bmatrix},G_{k,D}(z)\bigg|_k\begin{bmatrix}
            1 & 0 \\ |D_1| & 1
\end{bmatrix}\begin{bmatrix}
            1 & \mu \\ 0 & 1
        \end{bmatrix}\right]_{2e} \\
        =&\sum_{D_1D_2=D}\sum_{\mu \text{ mod } |D_2|}\legendre{D_2}{-1}|D_2|^{-1}\left[G_{k,D_1,D_2}\left(\frac{z+\mu+|D_1|^*}{|D_2|}\right),G_{k,D_1,D_2}\left(\frac{z+\mu+|D_1|^*}{|D_2|}\right)\right]_{2e},
    \end{align*}
where we used the well-known fact
$G(\chi_{D_1})^2=\legendre{D_1}{-1}|D_1|$ and $G(\chi_D)^2=\legendre{D}{-1}|D|$ in the last equality (see e.g \cite[Corollary 2.1.47 on p.~33]{Cohenbook}). On the other hand, we have by our equivalent definition \eqref{eq:defofUslash} of the $U$ operator that
    \begin{align}
        &U_{|D_2|}([G_{k,D_1,D_2}(z),G_{k,D_1,D_2}(z)]_{2e})
        \\=&\sum_{v~\text{mod}~|D_2|}|D_2|^{(2k+4e)/2-1}[G_{k,D_1,D_2}(z),G_{k,D_1,D_2}(z)]_{2e}\bigg|_{2k+4e}\begin{bmatrix}
            1 & v\\ 0 & |D_2|
        \end{bmatrix}\\=&\sum_{v~\text{mod}~|D_2|}|D_2|^{(2k+4e)/2-1}\left[G_{k,D_1,D_2}(z)\bigg|_k\begin{bmatrix}
            1 & v\\ 0 & |D_2|
        \end{bmatrix},G_{k,D_1,D_2}(z)\bigg|_k\begin{bmatrix}
            1 & v\\ 0 & |D_2|
\end{bmatrix}\right]_{2e}\\=&\sum_{v~\text{mod}~|D_2|}|D_2|^{(2k+4e)/2-1}\left[|D_2|^{-k/2}G_{k,D_1,D_2}\left(\frac{z+v}{|D_2|}\right),|D_2|^{-k/2}G_{k,D_1,D_2}\left(\frac{z+v}{|D_2|}\right)\right]_{2e}\\=&\sum_{v~\text{mod}~|D_2|}|D_2|^{2e-1}\left[G_{k,D_1,D_2}\left(\frac{z+v}{|D_2|}\right),G_{k,D_1,D_2}\left(\frac{z+v}{|D_2|}\right)\right]_{2e}.
    \end{align}
    It follows that 
    \begin{align}
       &\quad \,\sum_{D=D_1D_2}\left(\frac{D_2}{-1}\right)|D_{2}|^{-2e}U_{|D_2|}([G_{k,D_1,D_2}(z),G_{k,D_1,D_2}(z)]_{2e})\\&=\sum_{D_1D_2=D}\sum_{v~\text{mod}~|D_2|}\left(\frac{D_2}{-1}\right)|D_2|^{-1}\left[G_{k,D_1,D_2}\left(\frac{z+v}{|D_2|}\right),G_{k,D_1,D_2}\left(\frac{z+v}{|D_2|}\right)\right]_{2e}\\&=\Tr_1^D([G_{k,D}(z),G_{k,D}(z)]_{2e}),
    \end{align}
    as desired.
\end{proof}
\begin{proposition}\label{prop:FourierexofGDe}
    Let $k\geq4$ and $e>0$ and let $D$ be an odd fundamental discriminant with $(-1)^kD>0$. Then 
    \begin{align}
      \mathcal{G}_{D,k,e}(z)=\sum_{D=D_1D_2}\legendre{D_2}{-|D_1|}|D_2|^{-e}U_{|D_2|}([G_{k,D_1,D_2}(4z),\theta(|D_1|z)]_e)
    \end{align}
    where the summation is over all decompositions of $D$ as a product of two fundamental discriminants, and $U_{|D_2|}$ is the map defined in \eqref{eq:defofUmap}.
\end{proposition}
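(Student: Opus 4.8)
The plan is to mirror the strategy used for $\mathcal{F}_{D,k,e}$ in Proposition \ref{prop:FourierexFDE}, taking as input the trace formula for $\mathcal{G}_{D,k,e}$ established in Proposition \ref{prop:projectioncomputation}. First I would start from $\mathcal{G}_{D,k,e}(z) = \Tr^{4D}_4[G_{k,D}(4z), \theta(|D|z)]_e$ and expand the trace over the coset representatives $\gamma_{D_1,\mu} = \begin{bsmallmatrix}1&0\\4|D_1|&1\end{bsmallmatrix}\begin{bsmallmatrix}1&\mu\\0&1\end{bsmallmatrix}$ of $\Gamma_0(4|D|)\backslash\Gamma_0(4)$ provided by Lemma \ref{lem:cosetreps} (with $N=4$, $S=|D|$), which are indexed by factorizations $D=D_1D_2$ into fundamental discriminants together with $\mu$ mod $|D_2|$. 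Using that the slash operator distributes across the Rankin--Cohen bracket (as in the proof of Proposition \ref{prop:projectioncomputation}), each summand becomes $[G_{k,D}(4z)|_k\gamma_{D_1,\mu}, \theta(|D|z)|_{1/2}\gamma_{D_1,\mu}]_e$.

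Next I would evaluate the two slashed factors. For the Eisenstein factor, write $G_{k,D}(4z)=4^{-k/2}G_{k,D}|_k\begin{bsmallmatrix}4&0\\0&1\end{bsmallmatrix}$ and use the matrix identity $\begin{bsmallmatrix}4&0\\0&1\end{bsmallmatrix}\gamma_{D_1,\mu} = \begin{bsmallmatrix}1&0\\|D_1|&1\end{bsmallmatrix}\begin{bsmallmatrix}4&4\mu\\0&1\end{bsmallmatrix}$; applying Lemma \ref{lem:FourierexpansionofGkD} (with $c=|D_1|$, $d=1$, so $\gcd(c,D)=|D_1|$) to the $\SL{2}(\ZZ)$ factor and then the upper-triangular scaling gives a constant multiple of $G_{k,D_1,D_2}\!\left(\frac{4(z+\mu)+|D_1|^*}{|D_2|}\right)$, where $|D_1|\,|D_1|^*\equiv1\pmod{|D_2|}$. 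For the theta factor, write $\theta(|D|z)=|D|^{-1/4}\theta|_{1/2}\begin{bsmallmatrix}|D|&0\\0&1\end{bsmallmatrix}$ and use $\begin{bsmallmatrix}|D|&0\\0&1\end{bsmallmatrix}\gamma_{D_1,\mu} = \begin{bsmallmatrix}|D|&0\\4|D_1|&1\end{bsmallmatrix}\begin{bsmallmatrix}1&\mu\\0&1\end{bsmallmatrix}$; Lemma \ref{lem:Fourierexpansiontheta} then yields a constant multiple of $\theta\!\left(\frac{|D_1|(z+\mu)+4^*}{|D_2|}\right)$, with $4\cdot4^*\equiv1\pmod{|D_2|}$.

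Separately I would expand the claimed right-hand side. Using the slash description \eqref{eq:defofUslash} of $U_{|D_2|}$ together with bilinearity of the bracket, one checks that $U_{|D_2|}([G_{k,D_1,D_2}(4z),\theta(|D_1|z)]_e) = |D_2|^{e-1}\sum_{v \bmod |D_2|}[G_{k,D_1,D_2}(\frac{4(z+v)}{|D_2|}), \theta(\frac{|D_1|(z+v)}{|D_2|})]_e$, so each term of the target carries the scalar $\legendre{D_2}{-|D_1|}|D_2|^{-1}$. It then remains to match the two expansions term by term. The key observation is that the arithmetic shifts coming from the trace side can be absorbed by reindexing: since $G_{k,D_1,D_2}$ and $\theta$ are $1$-periodic, the change of variable $v\equiv \mu + 4^*|D_1|^*\pmod{|D_2|}$ simultaneously aligns the Eisenstein argument (requiring $4v\equiv 4\mu+|D_1|^*$) and the theta argument (requiring $|D_1|v\equiv |D_1|\mu+4^*$), these two congruences being equivalent after multiplying by $4^*$ and $|D_1|^*$ respectively.

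Finally, the scalar prefactors must agree. Collecting the constants from the trace side and invoking the Gauss sum evaluations $G(\chi_{D_1})=\varepsilon_{|D_1|}|D_1|^{1/2}$ and $G(\chi_D)=\varepsilon_{|D|}|D|^{1/2}$, the trace-side coefficient becomes $\legendre{D_1}{|D_2|}\frac{\varepsilon_{|D_1|}}{\varepsilon_{|D|}\varepsilon_{|D_2|}}|D_2|^{-1}$, and I would then verify the identity $\legendre{D_1}{|D_2|}\frac{\varepsilon_{|D_1|}}{\varepsilon_{|D|}\varepsilon_{|D_2|}}=\legendre{D_2}{-|D_1|}$. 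I expect this last sign bookkeeping---reconciling the $\varepsilon$-factors with the Kronecker symbol $\legendre{D_2}{-|D_1|}$ via quadratic reciprocity---to be the main technical obstacle, exactly as the analogous Gauss sum identity $G(\chi_{D_1})^2/G(\chi_D)^2=\legendre{D_2}{-1}|D_2|^{-1}$ was the crux in the proof of Proposition \ref{prop:FourierexFDE}.
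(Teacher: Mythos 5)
Your proposal is correct and follows essentially the same route as the paper: start from the trace formula of Proposition \ref{prop:projectioncomputation}, expand over the coset representatives of Lemma \ref{lem:cosetreps}, evaluate the two slashed factors via Lemmas \ref{lem:FourierexpansionofGkD} and \ref{lem:Fourierexpansiontheta}, absorb the shifts $|D_1|^*$ and $4^*$ into a single reindexing $v=\mu+4^*|D_1|^*$ using $1$-periodicity, and match constants through the sign identity $\legendre{D_1}{|D_2|}\varepsilon_{|D_1|}/(\varepsilon_{|D|}\varepsilon_{|D_2|})=\legendre{D_2}{-|D_1|}$, which is exactly the paper's \eqref{eq:signofkronecker}. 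The only cosmetic difference is that you make the matrix factorizations feeding into the two lemmas fully explicit, which the paper leaves implicit.
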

\begin{proof}

The proof follows a similar outline to Proposition \ref{prop:FourierexFDE}. From Proposition \ref{prop:projectioncomputation}, we know that $\mathcal{G}_{D, k,e}(z)=\Tr^{4D}_4\left[G_{k,D}(4z),\theta(|D|z)\right]_e$.
We use the coset representatives (Lemma \ref{lem:cosetreps}) for $\Gamma_0(4|D|)\backslash \Gamma_0(4)$,
\begin{align}
\left\{\gamma_{D_1,\mu}=\begin{bmatrix}
                1 & 0 \\ 4|D_1| & 1
\end{bmatrix}\begin{bmatrix}
            1 & \mu \\ 0 & 1 
\end{bmatrix}: \qquad \text{where } D=D_1D_2,\quad \mu \pmod {|D_2|}\right\},\label{eq:cosets4Dto4}
\end{align}
where $D=D_1D_2$ is a product of fundamental discriminants. By a simple casework we have
\begin{align}\frac{\varepsilon_{|D_1|}}{\varepsilon_{|D|}\cdot\varepsilon_{|D_2|}}\legendre{D_1}{|D_2|}=\legendre{D_2}{-|D_1|}.\label{eq:signofkronecker}\end{align}
Now Lemmas \ref{lem:FourierexpansionofGkD} and \ref{lem:Fourierexpansiontheta} together with \eqref{eq:signofkronecker} imply that
\begin{align*}
      &\quad \,\sum_{D_1D_2=D}\sum_{\mu~\text{mod}~|D_2|}[G_{k,D}(4z),\theta(|D|z)]_e\big|_{k+2e+\frac{1}{2}}\gamma_{D_1,\mu} \\
        &=\sum_{D_1D_2=D}\sum_{\mu~\text{mod}~|D_2|}\left[G_{k,D}(4z)|_k\gamma_{D_1,\mu},\theta(|D|z)\big|_{\frac{1}{2}}\gamma_{D_1,\mu}\right]_e \\
        &=\sum_{D_1D_2=D}\sum_{\mu~\text{mod}~|D_2|} \frac{\varepsilon_{|D_1|}}{\varepsilon_{|D|}\cdot
        \varepsilon_{|D_2|}}\legendre{D_1}{|D_2|}|D_2|^{-1}\\
        &\qquad  \times\left[G_{k,D_1,D_2}\left(\frac{4z+|D_1|^*+4\mu}{|D_2|}\right),\theta\left(\frac{|D_1|z+4^*+|D_1|\mu}{|D_2|}\right)\right]_e
        \\
        &=\sum_{D_1D_2=D}\sum_{\mu~\text{mod}~|D_2|}\legendre{D_2}{-|D_1|}|D_2|^{-1} \\
         &\qquad \times \left[G_{k,D_1,D_2}\left(\frac{4(z+4^{\ast}|D_1|^*+\mu)}{|D_2|}\right),\theta\left(\frac{|D_1|(z+4^*|D_1|^*+\mu)}{|D_2|}\right)\right]_e \\        &=\hspace{-13pt}\sum_{D_1D_2=D}\sum_{\nu~\text{mod}~|D_2|}\hspace{-5pt}\legendre{D_2}{-|D_1|}\hspace{-3pt}|D_2|^{-1+k/2+1/4}\hspace{-2pt}\left[G_{k,D_1,D_2}(4z)\bigg|_k\begin{bmatrix}
            1 & \nu \\ 0 & |D_2|
        \end{bmatrix},\theta(|D_1|z)\bigg|_{\frac{1}{2}}\begin{bmatrix}
            1 & \nu \\ 0 & |D_2|
        \end{bmatrix}\hspace{-2pt}\right]_e \\
&=\hspace{-5pt}\sum_{D_1D_2=D}\sum_{\nu~\text{mod}~|D_2|}\legendre{D_2}{-|D_1|}|D_2|^{-1+k/2+1/4}\left[G_{k,D_1,D_2}(4z),\theta(|D_1|z)\right]_e\bigg|_{2e+k+\frac{1}{2}}\begin{bmatrix}
            1 & \nu \\ 0 & |D_2|
        \end{bmatrix} \\
        &=\sum_{D_1D_2=D} \legendre{D_2}{-|D_1|}|D_2|^{-e}U_{D_2}[G_{k,D_1,D_2}(4z),\theta(|D_1|z)]_e,
    \end{align*}
    as desired.
\end{proof}

We are now ready to compute the Fourier expansions of $\mathcal{F}_{D,k,e}$ and $\mathcal{S}_D(\mathcal{G}_{D,k,e})$.
\begin{lemma} \label{lem:ffouriercoefficient}
    Let $k\ge4$, $e>0$ and let $D$ be an odd fundamental discriminant with $(-1)^{k}D>0$. Then we have the Fourier expansion
    \begin{align}
        \mathcal{F}_{D,k,e}(z)= \sum_{n\ge 1}f_{D,k,e}(n)q^n,
    \end{align}
    where
    \begin{align}
        f_{D, k, e}(n)&= \sum_{D=D_1D_2}\legendre{D_2}{-1}|D_2|^{-2e}\sum_{\substack{a_1,a_2\ge 0\\ a_1+a_2=n|D_2|}}\sum_{d|(a_1, a_2)}\legendre{D}{d}d^{k-1}\sigma_{k-1, D_1, D_2}\left(\frac{a_1a_2}{d^2}\right) C_{e, a_1, a_2},\\ \label{eq:ffouriercoefficient}
        C_{e,a_1,a_2}&=\sum_{r=0}^{2e}(-1)^ra_1^ra_2^{2e-r}\binom{2e+k-1}{2e-r}\binom{2e+k-1}{r}.
    \end{align}
\end{lemma}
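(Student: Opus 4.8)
The plan is to feed the expression for $\mathcal{F}_{D,k,e}$ supplied by Proposition \ref{prop:FourierexFDE} through three reductions: extract the Fourier coefficients of the Rankin--Cohen bracket, apply the operator $U_{|D_2|}$, and finally collapse a product of twisted divisor sums into a single Hecke-type sum. First I would start from
\begin{align}
    \mathcal{F}_{D,k,e}(z)=\sum_{D=D_1D_2}\legendre{D_2}{-1}|D_2|^{-2e}\,U_{|D_2|}\big([G_{k,D_1,D_2},G_{k,D_1,D_2}]_{2e}\big),
\end{align}
so that the whole computation reduces to the Fourier coefficients of $[G_{k,D_1,D_2},G_{k,D_1,D_2}]_{2e}$.

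Writing $G_{k,D_1,D_2}(z)=\sum_{m\geq 0}\sigma_{k-1,D_1,D_2}(m)q^m$ and recalling that the normalized derivative multiplies the $m$-th Fourier coefficient by $m^r$, i.e.\ $G_{k,D_1,D_2}^{(r)}(z)=\sum_{m\geq 0}\sigma_{k-1,D_1,D_2}(m)m^rq^m$, I would read off directly from Definition \ref{def:rankincohen} (with $a=b=k$ and bracket index $2e$) that the $N$-th Fourier coefficient of $[G_{k,D_1,D_2},G_{k,D_1,D_2}]_{2e}$ equals
\begin{align}
    \sum_{\substack{a_1,a_2\geq 0\\ a_1+a_2=N}}\sigma_{k-1,D_1,D_2}(a_1)\,\sigma_{k-1,D_1,D_2}(a_2)\,C_{e,a_1,a_2},
\end{align}
with $C_{e,a_1,a_2}$ exactly the inner $r$-sum appearing in \eqref{eq:ffouriercoefficient} (using the convention $0^0=1$ for the boundary terms). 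Since $U_{|D_2|}$ sends the $n$-th coefficient of its input to its $(n|D_2|)$-th coefficient, the $n$-th coefficient of the summand above is this sum taken with $N=n|D_2|$, which already matches the stated shape apart from the divisor-sum factor.

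The substance of the lemma is therefore the identity
\begin{align}
    \sigma_{k-1,D_1,D_2}(a_1)\,\sigma_{k-1,D_1,D_2}(a_2)=\sum_{d\mid (a_1,a_2)}\legendre{D}{d}d^{k-1}\,\sigma_{k-1,D_1,D_2}\!\left(\frac{a_1a_2}{d^2}\right),
\end{align}
and this is the step I expect to be the crux. The function $\sigma_{k-1,D_1,D_2}$ is the Dirichlet convolution of the completely multiplicative functions $n\mapsto \legendre{D_1}{n}n^{k-1}$ and $n\mapsto\legendre{D_2}{n}$, hence multiplicative, with local Euler factor $\big(1-\legendre{D_1}{p}p^{k-1}X\big)^{-1}\big(1-\legendre{D_2}{p}X\big)^{-1}$. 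Setting $\alpha_p=\legendre{D_1}{p}p^{k-1}$ and $\beta_p=\legendre{D_2}{p}$, so that $\sigma_{k-1,D_1,D_2}(p^j)=\sum_{i=0}^{j}\alpha_p^i\beta_p^{j-i}$ and $\alpha_p\beta_p=\legendre{D}{p}p^{k-1}$, the identity reduces at each prime to the classical Hecke relation $\sigma(p^a)\sigma(p^b)=\sum_{i=0}^{\min(a,b)}(\alpha_p\beta_p)^i\sigma(p^{a+b-2i})$, which follows by a short induction (or directly from the closed form $\frac{\alpha_p^{j+1}-\beta_p^{j+1}}{\alpha_p-\beta_p}$). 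As both sides of the displayed identity are multiplicative in the pair $(a_1,a_2)$, verifying it on prime powers suffices. Finally I would dispatch the boundary terms with $a_1=0$ or $a_2=0$: these contribute only when $D_2=1$, since $\sigma_{k-1,D_1,D_2}(0)=0$ otherwise, and in that case $\legendre{D}{\cdot}=\legendre{D_1}{\cdot}$ and $\sigma_{k-1,D_1,1}(a)=\sum_{d\mid a}\legendre{D}{d}d^{k-1}$, so the identity persists there as well. Substituting this identity into the coefficient formula then yields exactly \eqref{eq:ffouriercoefficient}, completing the proof.
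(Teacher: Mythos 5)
Your proposal is correct and follows essentially the same route as the paper: extract the Fourier coefficients of the Rankin--Cohen bracket from Proposition \ref{prop:FourierexFDE}, apply $U_{|D_2|}$ to pass to the $n|D_2|$-th coefficient, and collapse $\sigma_{k-1,D_1,D_2}(a_1)\sigma_{k-1,D_1,D_2}(a_2)$ via the Hecke multiplicative relation. The only difference is that the paper cites Kohnen--Zagier for that relation, whereas you prove it by factoring into Euler products and checking prime powers (plus the $a_i=0$ boundary cases) --- a harmless, self-contained substitute.
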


\begin{proof}
By Proposition \ref{prop:FourierexFDE}, we have
\begin{align}
    f_{D,k,e}(n)=\sum_{D=D_1D_2}\legendre{D_2}{-1}|D_{2}|^{-2e}F_{D_1, D_2, e}(n),
\end{align}
where $F_{D_1, D_2, e}(n)$ is the $n|D_2|$-th Fourier coefficient of 
$[G_{k,D_1,D_2}(z),G_{k,D_1,D_2}(z)]_{2e}.$
Note that
\begin{align}
    G_{k,D_1,D_2}(z)^{(r)} = \sum_{n\ge 0} n^r \sigma_{k-1, D_1, D_2}(n)q^n,
\end{align}
which implies that the $n|D_2|$-th Fourier coefficient of $G_{k,D_1,D_2}^{(r)}(z)G_{k,D_1,D_2}^{(2e-r)}(z)$ is
\begin{align}
    \sum_{\substack{a_1, a_2\geq 0\\a_1+a_2=n|D_2|}} a_1^r \sigma_{k-1, D_1, D_2}(a_1)a_2^{2e-r} \sigma_{k-1, D_1, D_2}(a_2).
\end{align}
It follows that $F_{D_1, D_2, e}(n)=$
\begin{align}
     &\hspace{30pt} \sum_{r=0}^{2e}(-1)^r\binom{2e+k-1}{2e-r}\binom{2e+k-1}{r} \sum_{\substack{a_1, a_2\geq 0\\a_1+a_2=n|D_2|}} a_1^r \sigma_{k-1, D_1, D_2}(a_1)a_2^{2e-r} \sigma_{k-1, D_1, D_2}(a_2)\\
     &=\sum_{\substack{a_1, a_2\geq 0\\a_1+a_2=n|D_2|}} \sigma_{k-1, D_1, D_2}(a_1)\sigma_{k-1, D_1, D_2}(a_2)\sum_{r=0}^{2e}a_1^ra_2^{2e-r}(-1)^r\binom{2e+k-1}{2e-r}\binom{2e+k-1}{r}\\
     &=\sum_{\substack{a_1, a_2\geq 0\\a_1+a_2=n|D_2|}} \sigma_{k-1, D_1, D_2}(a_1)\sigma_{k-1, D_1, D_2}(a_2)C_{e, a_1, a_2}\\
     &=\sum_{\substack{a_1,a_2\ge 0\\ a_1+a_2=n|D_2|}}\sum_{d|(a_1, a_2)}\left(\frac{D}{d}\right)d^{k-1}\sigma_{k-1, D_1, D_2}\left(\frac{a_1a_2}{d^2}\right) C_{e, a_1, a_2}.
\end{align}
where the last equality is given by the Hecke multiplicative relation \cite[p. 194]{Kohnen-Zagier1981}
\begin{align}
    \sigma_{k-1, D_1, D_2}(a_1)\sigma_{k-1, D_1, D_2}(a_2)
    &=\sum_{d|(a_1,a_2)}\left(\frac{D}{d}\right)d^{k-1}\sigma_{k-1, D_1, D_2}\left(\frac{a_1a_2}{d^2}\right).
\end{align}
This finishes the proof.
\end{proof}
\begin{lemma} \label{lem:gfouriercoefficients}
Let $k\geq4$, $e>0$ and let $D$ be an odd fundamental discriminant with $(-1)^k D>0$. Then we have the Fourier expansion
\begin{align}\mathcal{S}_D\left(\mathcal{G}_{D,k,e}(z))\right) = \sum_{n \geq 1} g_{D,k,e}(n)q^n,\end{align}
where
\begin{equation}
g_{D,k,e}(n)= |D|^{e}\sum\limits_{D=D_{1}D_{2}}\left(\frac{D_{2}}{-1}\right)|D_{2}|^{-2e}\sum_{\substack{a_{1},a_{2}\geq0\\
a_{1}+a_{2}=n|D_{2}|
}
}\sum_{d|(a_{1},a_{2})}\left(\frac{D}{d}\right)d^{k-1}\sig\left(\frac{a_{1}a_{2}}{d^{2}}\right)E(a_1, a_2), 
\end{equation}
\begin{equation}
E(a_1, a_2)=\sum\limits_{r=0}^{e}(-1)^{r}\binom{e+k-1}{e-r}\binom{e-1/2}{r}4^{r}\left(a_{1}a_{2}\right)^{r}(a_{2}-a_{1})^{2(e-r)}.\label{eq:gfouriercoefficients}
\end{equation}
\end{lemma}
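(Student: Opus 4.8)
The plan is to compute the Fourier coefficients of $\mathcal{G}_{D,k,e}$ from its expansion in Proposition \ref{prop:FourierexofGDe} and then feed them into the Shimura lift \eqref{eq:defofshimuralift}, in close analogy with the proof of Lemma \ref{lem:ffouriercoefficient}, the new ingredients being the $\theta$-factor and the lift itself. Using $\Gk(4z)=\sum_{m\ge0}\sig(m)q^{4m}$ and $\theta(|D_1|z)=\sum_{s\in\BZ}q^{|D_1|s^2}$, and the fact that the $r$-th normalized derivative multiplies the $n$-th Fourier coefficient by $n^r$, the expansion \eqref{eq:defofrankin-cohenbracket} of the bracket shows that the $M$-th Fourier coefficient of $[\Gk(4z),\theta(|D_1|z)]_e$ equals
\[\sum_{\substack{4m+|D_1|s^2=M\\ m\ge0,\ s\in\BZ}}\sig(m)\sum_{r=0}^{e}(-1)^r\binom{e+k-1}{e-r}\binom{e-1/2}{r}(4m)^r(|D_1|s^2)^{e-r}.\]
Applying $\Ud$ replaces $M$ by $M|D_2|$, so Proposition \ref{prop:FourierexofGDe} gives the $N$-th coefficient $c(N)$ of $\mathcal{G}_{D,k,e}$ as a sum over factorizations $D=D_1D_2$. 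Since $\mathcal{G}_{D,k,e}$ has weight $\ell+\frac12$ with $\ell=k+2e$, \eqref{eq:defofshimuralift} yields $g_{D,k,e}(n)=\sum_{d\mid n}\legendre{D}{d}d^{\ell-1}c(|D|(n/d)^2)$; substituting $N=|D|(n/d)^2$ turns the constraint in $h$ into $4m+|D_1|s^2=|D_1||D_2|^2(n/d)^2$.

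The heart of the argument is a change of variables that rebuilds a second divisor-sum factor, mirroring the Selberg phenomenon $\mathcal{S}_1(f(4z)\theta(z))=f(z)^2$. Fix $d\mid n$, put $t=n/d$, and for each $s$ set
\[a_1=\frac{d(|D_2|t-s)}{2},\qquad a_2=\frac{d(|D_2|t+s)}{2}.\]
Because $D$ is odd, reducing $4m+|D_1|s^2=|D_1||D_2|^2t^2$ modulo $4$ forces $|D_2|t\equiv s\pmod2$, so $a_1,a_2$ are nonnegative integers, both divisible by $d$; moreover $a_1+a_2=n|D_2|$, $a_2-a_1=ds$, and $a_1a_2=d^2m/|D_1|$, i.e.\ $m=|D_1|a_1a_2/d^2$. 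As $s$ ranges over the integers with $m\ge0$, the map $s\mapsto(a_1,a_2)$ is a bijection onto the ordered pairs with $a_1+a_2=n|D_2|$, $a_1,a_2\ge0$, and $d\mid a_1$. Under this substitution
\[(4m)^r(|D_1|s^2)^{e-r}=\frac{|D_1|^e}{d^{2e}}\,4^r(a_1a_2)^r(a_2-a_1)^{2(e-r)},\]
so the inner $r$-sum collapses to $|D_1|^e d^{-2e}E(a_1,a_2)$ with $E$ exactly as in \eqref{eq:gfouriercoefficients}.

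It then remains to collect the arithmetic factors. I would invoke the elementary twist identity $\sig(|D_1|x)=\legendre{D_2}{|D_1|}\sig(x)$ — which holds because $\legendre{D_1}{\cdot}$ forces the factor $|D_1|$ into the second variable of $\sigma_{k-1,D_1,D_2}$ — to rewrite $\sig(m)=\legendre{D_2}{|D_1|}\sig(a_1a_2/d^2)$; combined with the prefactor $\legendre{D_2}{-|D_1|}$ from Proposition \ref{prop:FourierexofGDe} this leaves $\legendre{D_2}{-1}$. The remaining powers combine as $|D_1|^e|D_2|^{-e}=|D|^e|D_2|^{-2e}$ and $d^{\ell-1}d^{-2e}=d^{k-1}$. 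Finally, interchanging the order of summation to sum first over $(a_1,a_2)$ with $a_1+a_2=n|D_2|$, the admissible $d$ are those with $d\mid n$ and $d\mid a_1$; since $\legendre{D}{d}=0$ unless $\gcd(d,|D_2|)=1$, on the support of $\legendre{D}{\cdot}$ these are precisely the $d\mid\gcd(a_1,a_2)$, so the $d$-sum becomes the Hecke-type factor $\sum_{d\mid(a_1,a_2)}\legendre{D}{d}d^{k-1}\sig(a_1a_2/d^2)$ and the stated formula for $g_{D,k,e}(n)$ follows. The main obstacle is the change of variables in the second paragraph: checking the parity and divisibility constraints and confirming that $s\mapsto(a_1,a_2)$ is the correct bijection onto $\{a_1+a_2=n|D_2|,\ d\mid a_1\}$. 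Once this is secured, the collapse of the $r$-sum, the twist identity, and the bookkeeping of the powers of $|D|$, $|D_2|$ and $d$ are all routine.
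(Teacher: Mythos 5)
Your proposal is correct and follows essentially the same route as the paper's proof: expand the Rankin--Cohen bracket coefficientwise, apply $U_{|D_2|}$ and the Shimura lift, perform the change of variables $s\mapsto(a_1,a_2)$ with $a_1+a_2=n|D_2|$ and $a_2-a_1=ds$, use $\sigma_{k-1,D_1,D_2}(|D_1|x)=\legendre{D_2}{|D_1|}\sigma_{k-1,D_1,D_2}(x)$, and collect the powers of $|D_1|$, $|D_2|$ and $d$. Your treatment of the bijection and of the re-indexing $\sum_{d\mid n}\mapsto\sum_{d\mid(a_1,a_2)}$ on the support of $\legendre{D}{\cdot}$ is in fact slightly more careful than the paper's, but it is the same argument.
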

\begin{proof}
From Proposition \ref{prop:FourierexofGDe} and the definition of the Rankin-Cohen bracket \eqref{eq:defofrankin-cohenbracket}, we have $\mathcal{G}_{D,k,e}(z)=$
\begin{align}
\sum_{D=D_1D_2}\legendre{D_2}{-|D_1|}|D_2|^{-e}U_{|D_2|}\left(\sum_{r=0}^e(-1)^r\binom{e+k-1}{e-r}\binom{e-1/2}{r}G_{k,D_1,D_2}(4z)^{(r)}\theta(|D_1|z)^{(e-r)}\right).
\end{align}
Hence $\mathcal{S}_D(\mathcal{G}_{D,k,e}(z))=$
\begin{align}
\sum\limits_{D=D_{1}D_{2}}\hspace{-2pt}\legendre{D_{2}}{-|D_{1}|}|D_{2}|^{-e}
\sum\limits_{r=0}^{e}(-1)^{r}\binom{e+k-1}{e-r}\binom{e-1/2}{r}\mathcal{S}_{D}\hspace{-2pt}\left(U_{|D_{2}|}(G_{k,D_{1},D_{2}}(4z)^{(r)}\theta(|D_{1}|z)^{(e-r)})\right)\hspace{-1pt},
\end{align}
where we abuse notation to move the Shimura operator $\mathcal{S}_D$ into the sums. Now taking the Fourier expansions of $\Gk^{(r)}$ and $\theta(|D_1|z)^{(e-r)}$, we get 

\[G_{k,D_{1},D_{2}}(4z)^{(r)}=\sum_{n\geq0}(4n)^{r}\sigma_{k-1,D_{1},D_{2}}(n)q^{4n}\]
\[\theta(|D_{1}|z)^{(e-r)}=\sum_{n\in\mathbb{Z}}(n^{2}|D_{1}|)^{e-r}q^{n^{2}|D_{1}|}\]
where $\sigma_{k-1, D_1, D_2}(n)$ is defined in \cite[p. 193]{Kohnen-Zagier1981}. This allows us to rewrite the product
\begin{align}
G_{k,D_{1},D_{2}}(4z)^{(r)}\theta(|D_{1}|z)^{(e-r)}&=\left(\sum_{s\geq0}(4s)^{r}\sigma_{k-1,D_{1},D_{2}}(s)q^{4s}\right)\left(\sum_{m\in\mathbb{Z}}(m^{2}|D_{1}|)^{e-r}q^{m^{2}|D_{1}|}\right)\\
&=\sum_{n\geq0}c_{r}(n)q^{n},
\end{align}
where
\[c_{r}(n)=\sum_{m\equiv n\mod 2}\left(n-m^{2}|D_{1}|\right)^{r}\sigma_{k-1,D_{1},D_{2}}\left(\frac{n-m^{2}|D_{1}|}{4}\right)(m^{2}|D_{1}|)^{e-r},\]
taking the convention that $\sigma_{k-1,D_{1},D_{2}}(x)=0$ if $x\notin\mathbb{\mathbb{Z}}$ or $x<0$.
It follows that 
\begin{align}\Ud\left(G_{k,D_{1},D_{2}}(4z)^{(r)}\theta(|D_{1}|z)^{(e-r)}\right)=\Ud\left(\sum_{n\geq1}c_{r}(n)q^{n}\right)=\sum_{n\geq1}c_{r}(n|D_2|)q^{n}.\label{eq:coeffaD_2}\end{align}
Now we compute the $D$-th Shimura lift of \eqref{eq:coeffaD_2}. If we write 
\begin{align}
\mathcal{S}_D\left(\sum_{n\geq1}c_{r}(n|D_2|)q^{n}\right)=\sum_{n\geq1}a_{r,D_2}(n)q^{n}\end{align}
for some $a_{r,D_2}(n)$, then by the definition of $\mathcal{S}_D$ \eqref{eq:defofshimuralift}, we have $a_{r,{D_2}}(n)=$
\begin{align}
\sum_{d|n}\legendre{D}{d}d^{k+2e-1}\sum_{m\in\mathbb{Z}}\left(|D_{2}||D|\frac{n^{2}}{d^{2}}-|D_{1}|m^{2}\right)^{r}
\left(m^{2}|D_{1}|\right)^{e-r}\sig\left(\frac{|D_{2}||D|\frac{n^{2}}{d^{2}}-m^{2}|D_{1}|}{4}\right).
\end{align}
Note that we can write
\begin{align}\frac{|D_{2}||D|\frac{n^{2}}{d^{2}}-m^{2}|D_{1}|}{4}=|D_{1}|a_1a_2,\quad{\rm where}~a_{1}=\frac{|D_{2}|\frac{n}{d}+m}{2}~{\rm and}~a_{2}=\frac{|D_{2}|\frac{n}{d}-m}{2}.\end{align}
It follows that $a_{r,D_2}(n)=$
\begin{align}
&\hspace{30pt}\sum_{d|n}\legendre{D}{d}d^{k+2e-1}\sum_{\substack{a_{1},a_{2}\geq0\\
a_{1}+a_{2}=\frac{n}{d}|D_{2}|
}
}\left(4|D_{1}|a_{1}a_{2}\right)^{r}(a_{2}-a_{1})^{2(e-r)}|D_{1}|^{e-r}\sig(|D_{1}|a_{1}a_{2})\\
 & =\sum_{\substack{a_{1},a_{2}\geq0\\
a_{1}+a_{2}=n|D_{2}|
}
}\sum_{d|(a_{1},a_{2})}\legendre{D}{d}d^{k+2e-1}\left(4|D_{1}|\frac{a_{1}a_{2}}{d^2}\right)^{r}\left(\frac{a_{2}-a_{1}}{d}\right)^{2(e-r)}|D_{1}|^{e-r}\sig\left(|D_{1}|\frac{a_{1}a_{2}}{d^{2}}\right)\\
&=\sum_{\substack{a_{1},a_{2}\geq0\\
a_{1}+a_{2}=n|D_{2}|
}
}\sum_{d|(a_{1},a_{2})}\legendre{D}{d}\legendre{D_{2}}{|D_{1}|}d^{k-1}|D_{1}|^{e}\left(4a_{1}a_{2}\right)^{r}(a_{2}-a_{1})^{2(e-r)}\sig\left(\frac{a_{1}a_{2}}{d^{2}}\right).
\label{eq:FouriercD2(n)}\end{align}
Now we substitute \eqref{eq:FouriercD2(n)} back into our equation for $\mathcal{S}_D(\mathcal{G}_{D,k,e}(z))$. Let $$\mathcal{S}_D(\mathcal{G}_{D,k,e}(z)) = \sum_{n\geq 1} g_{D,k,e}(n)q^n.$$ 
Then we have $g_{D,k,e}(n)=$
\begin{align}
&\sum_{D=D_1D_2}\legendre{D_2}{-|D_1|}|D_2|^{-e}\sum_{r=0}^e\binom{e+k-1}{e-r}\binom{e-1/2}{r}a_{r,D_2}(n)\\=&
\sum\limits_{D=D_{1}D_{2}}\legendre{D_{2}}{-|D_{1}|}|D_{2}|^{-e}\sum\limits_{r=0}^{e}(-1)^{r}\binom{e+k-1}{e-r}\binom{e-1/2}{r}\\
 &\qquad \times\sum_{\substack{a_{1},a_{2}\geq0\\a_{1}+a_{2}=n|D_{2}|}}\sum_{d|(a_{1},a_{2})}\legendre{D}{d}\legendre{D_{2}}{|D_{1}|}d^{k-1}|D_{1}|^{e}\left(4a_{1}a_{2}\right)^{r}(a_{2}-a_{1})^{2(e-r)}\sig\left(\frac{a_{1}a_{2}}{d^{2}}\right)\\
=&\sum\limits_{D=D_{1}D_{2}}\left(\frac{D_{2}}{-|D_{1}|}\right)\legendre{D_{2}}{|D_{1}|}|D_{2}|^{-e}|D_{1}|^{e}\sum\limits_{r=0}^{e}(-1)^{r}\binom{e+k-1}{e-r}\binom{e-1/2}{r}\\
 &\qquad \times\sum_{\substack{a_{1},a_{2}\geq0\\
a_{1}+a_{2}=n|D_{2}|
}
}\sum_{d|(a_{1},a_{2})}\legendre{D}{d}d^{k-1}\left(4a_{1}a_{2}\right)^{r}(a_{2}-a_{1})^{2(e-r)}\sig\left(\frac{a_{1}a_{2}}{d^{2}}\right)\\
=&|D|^{e}\sum\limits_{D=D_{1}D_{2}}\legendre{D_{2}}{-1}|D_{2}|^{-2e}\sum_{\substack{a_{1},a_{2}\geq0\\a_{1}+a_{2}=n|D_{2}|}}\sum_{d|(a_{1},a_{2})}\legendre{D}{d}d^{k-1}\sig\left(\frac{a_{1}a_{2}}{d^{2}}\right)
\\
 &\qquad \times\sum\limits_{r=0}^{e}(-1)^{r}\binom{e+k-1}{e-r}\binom{e-1/2}{r}4^{r}\left(a_{1}a_{2}\right)^{r}(a_{2}-a_{1})^{2(e-r)}\\
=&|D|^{e}\sum\limits_{D=D_{1}D_{2}}\legendre{D_{2}}{-1}|D_{2}|^{-2e}\left(\sum_{\substack{a_{1},a_{2}\geq0\\
a_{1}+a_{2}=n|D_{2}|
}
}\sum_{d|(a_{1},a_{2})}\legendre{D}{d}d^{k-1}\sig\left(\frac{a_{1}a_{2}}{d^{2}}\right)E(a_1, a_2)\right),
\end{align}
as desired.
\end{proof}
\section{Discussion}
\label{sec:discussion}
It is a folklore conjecture that $S_{2\ell}^{0,D}(1)=S_{2\ell}(1)$. Luo \cite{Luononvanishing2015} showed that for $\ell$ sufficiently large one has $\dim S_{2\ell}^{0,1}(1)\gg \ell$. Our Theorem \ref{thm:gspan} (the case $D=1$ was proved earlier by Xue \cite[Proposition 3.5]{Xue-Selbergidentity}) provides a possible different approach to the conjecture. By studying the linear independence of $\mathcal{G}_{D,k,e}$ or $\mathcal{F}_{D,k,e}$, one could obtain lower bounds on the dimension of $S_{2\ell}^{0,D}(1)$.

\begin{conjecture}
For $\ell$ even, $D$ a positive fundamental discriminant, the set $\{\mathcal{G}_{D, k,e}~|~k+2e=\ell, 1 \leq e \leq \lfloor \frac{\ell}{6}\rfloor\}$ is linearly independent.
\end{conjecture}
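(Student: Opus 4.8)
Proof proposal for the conjecture.

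The plan is to transfer the independence question to the integral weight side and then to periods. Since $\{\mathcal{G}_{D,k,e}\}_{k+2e=\ell}$ lies in $S^{0,D}_{\ell+1/2}(4)$ (Proposition \ref{prop:GDkespan}) and $\mathcal{S}_D$ is injective there (Proposition \ref{prop:isomorphism}), while Theorem \ref{thm:lift} gives $\mathcal{S}_D(\mathcal{G}_{D,k,e})=|D|^e\frac{\binom{k+e-1}{e}}{\binom{k+2e-1}{2e}}\mathcal{F}_{D,k,e}$ with a nonzero scalar, the asserted linear independence of $\{\mathcal{G}_{D,\ell-2e,e}\}_{e=1}^{\lfloor \ell/6\rfloor}$ is equivalent to that of $\{\mathcal{F}_{D,\ell-2e,e}\}_{e=1}^{\lfloor \ell/6\rfloor}$ in $S_{2\ell}(1)$. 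So I would work exclusively with the latter family.

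Next I would test a hypothetical relation $\sum_{e}\lambda_e\mathcal{F}_{D,\ell-2e,e}=0$ against a Hecke eigenbasis. Pairing with a normalized eigenform $f$ and invoking Proposition \ref{prop:FdkeInnerProduct}, and crucially using that $k+2e=\ell$ is held \emph{fixed} so that the twisted central value $L(f,D,\ell)$ appears as a common factor in every term, the relation forces
\[
\sum_{e=1}^{\lfloor \ell/6\rfloor}\lambda_e\,c_e\,L(f,2\ell-2e-1)=0 \qquad \text{for every } f \text{ with } L(f,D,\ell)\neq 0,
\]
where each $c_e\neq 0$ collects the explicit Gamma factors. Rewriting $L(f,2\ell-2e-1)$ through the period $r_{2\ell-2e-2}(f)$ via \eqref{eq:defofperiods}, the conjecture becomes precisely the statement that the topmost even period functionals $r_{2\ell-4},r_{2\ell-6},\dots,r_{2\ell-2\lfloor \ell/6\rfloor-2}$ are linearly independent as functionals on $S^{0,D}_{2\ell}(1)$.

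From here I see two avenues. On $S_{2\ell}(1)$ itself, note that for even $\ell$ one has $\dim S_{2\ell}(1)=\lfloor \ell/6\rfloor$, and by Lemma \ref{lem:periodvanishing} the even periods $r_t$ with $\ell+2\le t\le 2\ell-4$ form a determining (hence spanning) family of functionals on the $\lfloor \ell/6\rfloor$-dimensional dual of $S_{2\ell}(1)$; the routine half would be to check that the topmost $\lfloor \ell/6\rfloor$ of them, those indexed by $e\le\lfloor \ell/6\rfloor$, already form a basis, which should follow from the Eichler-Shimura isomorphism and the non-degeneracy of the Haberland pairing on even period polynomials. The genuine obstacle, and the reason the statement is a conjecture rather than a theorem, is the restriction to the subspace $S^{0,D}_{2\ell}(1)$: a priori these functionals could degenerate there, and ruling this out is exactly as deep as the nonvanishing problem. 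Indeed, an unconditional proof would give $\dim S^{0,D}_{2\ell}(1)\ge\lfloor \ell/6\rfloor=\dim S_{2\ell}(1)$, hence $S^{0,D}_{2\ell}(1)=S_{2\ell}(1)$, which is the folklore conjecture; so one cannot expect a soft argument.

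A more promising route toward genuinely unconditional progress is to prove the independence directly from Fourier expansions, sidestepping the eigenform pairing. By Lemma \ref{lem:gfouriercoefficients} the coefficients $g_{D,\ell-2e,e}(n)$ are explicit, and it would suffice to exhibit indices $n_1<\cdots<n_{\lfloor \ell/6\rfloor}$ for which $\det\big(g_{D,\ell-2e,e}(n_i)\big)_{i,e}\neq 0$. I would attempt to force a dominant diagonal by exploiting the $e$-dependence of $E(a_1,a_2)\sim(a_2-a_1)^{2e}$ together with the $|D|^e$ scaling, choosing the $n_i$ so as to separate these growth rates, and then bounding the off-diagonal contributions from the divisor-twisted sums $\sig(a_1a_2/d^2)$. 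The hard part will be controlling the delicate cancellation inherent in these coefficients well enough to certify nonvanishing of the determinant for all large even $\ell$; success here would prove the conjecture unconditionally and, in view of the equivalence above, yield the nonvanishing conjecture for these weights.
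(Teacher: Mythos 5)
You have not proved the statement, and you say so yourself; but it is worth being clear that the paper does not prove it either. This is a \emph{Conjecture} in Section \ref{sec:discussion}: the authors only verify it computationally (for $D=1$ up to $\ell=1000$ and for prime $D<50$ up to $\ell=100$) by checking that the matrix $\bigl(g_{D,\ell-2e,e}(4j)\bigr)_{e,j}$ of Fourier coefficients has nonzero determinant. So there is no proof in the paper to compare against; what can be assessed is whether your reductions are sound and whether your identification of the obstruction is accurate. They are. The transfer from the $\mathcal{G}_{D,k,e}$ to the $\mathcal{F}_{D,k,e}$ via Theorem \ref{thm:lift} together with the injectivity of $\mathcal{S}_D$ on $S^{0,D}_{\ell+1/2}(4)$ (Proposition \ref{prop:isomorphism}) is valid, and so is the reduction, via Proposition \ref{prop:FdkeInnerProduct} and the observation that $L(f,D,k+2e)=L(f,D,\ell)$ is a common factor, to the linear independence of the period functionals $r_{2\ell-2e-2}$, $1\le e\le\lfloor \ell/6\rfloor$, restricted to $S^{0,D}_{2\ell}(1)$. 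Your diagnosis of why no soft argument can work is exactly the point the authors make: since $\lfloor \ell/6\rfloor=\dim S_{2\ell}(1)$ for even $\ell$, an unconditional proof would force $S^{0,D}_{2\ell}(1)=S_{2\ell}(1)$, i.e.\ the folklore nonvanishing conjecture. (One caveat: even the half you call routine --- that the topmost $\lfloor \ell/6\rfloor$ even period functionals form a basis of the dual of all of $S_{2\ell}(1)$ --- is not supplied by Lemma \ref{lem:periodvanishing}, which only shows that the larger family $\{r_t\}_{\ell+2\le t\le 2\ell-4,\ t\ \mathrm{even}}$ is determining; selecting a basis from it requires a separate nondegeneracy argument.)

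Your second route is precisely the paper's verification strategy made explicit: exhibit indices $n_1<\dots<n_{\lfloor\ell/6\rfloor}$ (the authors take $n_i=4i$) with $\det\bigl(g_{D,\ell-2e,e}(n_i)\bigr)\ne 0$, using the formulas of Lemma \ref{lem:gfouriercoefficients}. The gap is the one you name: nobody has a dominant-diagonal or asymptotic argument controlling the cancellation in the sums involving $\sigma_{k-1,D_1,D_2}(a_1a_2/d^2)$ and the factors $E(a_1,a_2)$ well enough to certify this determinant for all even $\ell$, and for the small values of $n$ involved the growth-rate separation you hope to exploit is not in evidence. In short: your write-up is a correct and well-calibrated reduction with an honestly flagged open step, which matches the actual status of the statement as an open conjecture supported by computation.
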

We checked this conjecture computationally in the $D=1$ case up to $\ell=1000$ and for prime $D$ less than 50 up to $\ell=100$, using code written in Pari/GP \cite{paricode}. In particular, we computationally verified that the matrix 
\[\begin{bmatrix} g_{D, \ell-2,1}(4) & g_{D, \ell -2,1}(8) & \ldots & g_{D, \ell -2,1}(4\lfloor \frac{\ell}{6}\rfloor) \\ 
 g_{D, \ell-4,2}(4) &  g_{D, \ell-4,2}(8) & \ldots & g_{D, \ell -4,2}(4\lfloor \frac{\ell}{6}\rfloor)\\
 \vdots & \vdots & \ddots & \vdots \\
   g_{D,\ell-2\lfloor \frac{\ell}{6}\rfloor, \lfloor \frac{\ell}{6}\rfloor}(4) &  g_{D,\ell-2\lfloor \frac{\ell}{6}\rfloor, \lfloor \frac{\ell}{6}\rfloor}(8) & \ldots & g_{D,\ell -2\lfloor \frac{\ell}{6}\rfloor,\lfloor \frac{\ell}{6}\rfloor}(4\lfloor \frac{\ell}{6}\rfloor)\\
\end{bmatrix},\]
where $\mathcal{G}_{D,k,e} = \sum_{n \geq 1} g_{D,k,e}(n)q^n$, has nonzero determinant. Further work in this area should try to prove that this determinant is nonzero in general. 

The conjecture would have several interesting consequences. Using the isomorphism between $S^{0,D}_{\ell+1/2}(4)$ and $S^{0,D}_{2\ell}(1)$ given by the $D$-th Shimura lift, we find that the dimension of $S^{0,D}_{2\ell}$ would be at least $\lfloor \frac{\ell}{6}\rfloor$. Since the dimension of $S_{2\ell}(1)$ for even $\ell$ is $\lfloor \frac{2\ell}{12} \rfloor = \lfloor \frac{\ell}{6} \rfloor$ and $S^{0,D}_{2\ell}(1) \subseteq S_{2\ell}(1)$, we would conclude that $S^{0,D}_{2\ell}(1)=S_{2\ell}(1)$, settling the  conjecture on the non-vanishing of twisted central $L$-values for Hecke eigenforms. 

This would then imply that $S^{0,D}_{\ell+1/2}(4) = S^{+}_{\ell+1/2}(4)$, so the Kohnen plus space for $k$ even is generated by Hecke eigenforms whose $D$-th coefficients are nonzero for all fundamental discriminants $D$. Further, we would conclude that $\{\mathcal{G}_{D, k,e}\}_{k+2e=\ell,1\leq e \leq \lfloor \frac{\ell}{6} \rfloor}$ is a basis for $S^{+}_{\ell+1/2}(4)$, and the set $\{\mathcal{G}_{D, k,e}\}_{k+2e=\ell, 0\leq e \leq \lfloor \frac{\ell}{6} \rfloor}$ is a basis for $M^{+}_{\ell+1/2}(4)$ (since the $0$-th Rankin-Cohen bracket produces a modular form which is non-cuspidal but still in the Kohnen plus space). To the best of our knowledge, a similar basis was first mentioned by Henri Cohen in a MathOverflow post.

\section*{Acknowledgements}
This research was supported by NSA MSP grant H98230-24-1-0033.

\bibliographystyle{plain}

\providecommand{\bysame}{\leavevmode\hbox
to3em{\hrulefill}\thinspace}

\bibliography{ref}

\vspace{.2in}

\end{document}